\newcommand{\difFrac}[2]{\frac{\dif #1}{\dif #2}}
\newcommand{\BCJ}{\mathrm{BCJ}}
\newcommand{\dif}{\mathrm{d}}
\newcommand{\ii}{\mathbf{i}}
\newcommand{\sgn}{\mathrm{sign}}
\newcommand{\pdfFrac}[2]{\frac{\partial #1}{\partial #2}}
\title
{
  Lagrangian Particle Classification and
  Lagrangian Flux Identities
  for a Moving Hypersurface
}
\author{Lingyun Ding\thanks{Department of Mathematics,
University of California Los Angeles,
405 Hilgard Avenue,
Los Angeles, California, 90095 United States
(\tt dingly@g.ucla.edu).}\and 
Shuang Hu\thanks{Zhejiang University,
866 Yuhangtang Road, 
Hangzhou, Zhejiang Province, 310058 China
({\tt 12335013@zju.edu.cn}).}\and 
Baiyun Huang\thanks{
  School of Mathematical Sciences,
  Zhejiang University
  ({\tt huangbaiyun1@126.com}).}\and 
Qinghai Zhang\thanks{Corresponding author. 
School of Mathematical Sciences,
Zhejiang University,
({\tt qinghai@zju.edu.cn}). 
}
}
\begin{document}
\maketitle
\begin{abstract}
  For a moving hypersurface in the flow
 of a nonautonomous ordinary differential equation
 in $n$-dimensional Euclidean spaces, 
 the fluxing index of a passively-advected Lagrangian particle
 is the total number of times it crosses the moving hypersurface
 within a time interval.
The problem of Lagrangian particle classification
 is to decompose the phase space into flux sets, 
 equivalence classes of Lagrangian particles at the initial time.
In the context of scalar conservation laws, 
 the problem of Lagrangian flux calculation (LFC)
 is to find flux identities that relate
 the Eulerian flux of a scalar through the moving hypersurface, 
 a spatiotemporal integral over the moving surface
 in a given time interval, 
 to spatial integrals over donating regions
 at the initial time of the interval.
In this work,
 we implicitly characterize flux sets via topological degrees,
 explicitly construct donating regions,
 prove the equivalence of flux sets and donating regions,
 and establish two flux identities; 
 these analytical results constitute
 our solutions to the aforementioned problems. 
Based on a flux identity suitable for numerical calculation, 
 we further proposed a new LFC algorithm, 
 prove its convergence,
 and demonstrate its efficiency,
 good conditioning, and high-order accuracy
 by results of various numerical tests.
  


\end{abstract}
\begin{keywords}
  topological degree; 
  scalar conservation law;
  finite volume method;
  Lagrangian flux calculation;
  Lagrangian particle classification; 
  fluxing index;
  donating region. 
\end{keywords}

\textbf{Mathematics Subject Classification (2010):} 37K25, 70H33, 76M12. 
\section{Introduction}
\label{sec:introduction}

For a time-dependent velocity field $\mathbf{u}(\mathbf{x},t)$ 
 that is continuous in time and Lipschitz continuous in space, 
 the nonautonomous ordinary differential equation (ODE)
\begin{equation}
  \label{eq:velODE}
  \frac{\dif\, \mathbf{x}}{\dif\, t} = \mathbf{u}(\mathbf{x},t)
\end{equation}
 admits a unique solution for any initial time $t_0$
 and any initial position $\mathbf{p}(t_0)\in\mathbb{R}^m$.
This uniqueness gives rise to a flow map
 $\phi: \mathbb{R}^{m}\times \mathbb{R}\times
 \mathbb{R}\rightarrow \mathbb{R}^{m}$
 that maps the initial position $\mathbf{p}(t_0)$
 of a Lagrangian particle $\mathbf{p}$,
 the initial time $t_0$,
 and the time increment $k$
 to $\mathbf{p}(t_0+k)$, 
 the position of $\mathbf{p}$ at time $t_0+k$, 
\begin{equation}
  \label{eq:PhiExpression}
  \phi_{t_{0}}^{k}(\mathbf{p}) := \mathbf{p}(t_0+k)
  = \mathbf{p}(t_0) + \int_{t_0}^{t_{0}+k}
  \mathbf{u}(\mathbf{p}(t),t)\dif t.
\end{equation} 

For fixed $t_{0}\in\mathbb{R}$ and $k>0$, 
 the homeomorphism
 $\phi_{t_{0}}^{+k}: \mathbb{R}^m \rightarrow \mathbb{R}^m$
 satisfies 
 $\phi^{-k}_{t_{0}+k}(\phi_{t_0}^{+k}(\mathbf{p}))=\mathbf{p}$ 
 and $\phi_{t_0}^{+k}(\phi^{-k}_{t_{0}+k}(\mathbf{x}))=\mathbf{x}$, 
 i.e., $\phi_{t_{0}+k}^{-k}$ is the inverse of $\phi_{t_{0}}^{+k}$.


A common characteristic curve of the flow map is the \emph{pathline},
 a curve generated by following a particle $\mathbf{p}$
 within a time interval $[t_0, t_0+ k]$: 
\begin{equation}
  \label{eq:pathline}
  \Phi_{t_{0}}^{+k}(\mathbf{p})
  =\left\{\phi_{t_{0}}^{+\tau k}(\mathbf{p})\;|\;\tau\in(0,1)\right\}.
\end{equation}

Let ${\mathcal S}(t)$ be a homotopy class of oriented hypersurfaces,
 each having co-dimension one in $\mathbb{R}^m$.
If $\mathbf{u}$, $\mathbf{p}(t_0)$, $[t_0,t_0+k]$,
 and ${\mathcal S}(t)$ are given a priori,
 one can follow the particle $\mathbf{p}$
 to count the number of crossings
 of the pathline $\Phi_{t_{0}}^{+k}(\mathbf{p})$
 to ${\mathcal S}(t)$,
 with the sign of each crossing given by the inner product
 of the relative velocity
 $\mathbf{u}(\mathbf{p}(t_{\times}), t_{\times})
 - \partial_t{\mathcal S}(t_{\times})$ 
 and the normal vector of ${\mathcal S}(t_{\times})$
 at the crossing point $\mathbf{p}(t_{\times})$. 
This number is called 
 the \emph{fluxing index of the Lagrangian particle} of $\mathbf{p}$
 with respect to $\mathbf{u}$ and ${\mathcal S}(t)$
 within $[t_0,t_0+k]$; 
 see Definition \ref{def:fluxingIndex} and Figure \ref{fig:fluxingIndex}
 for more details. 
 
Conversely,
 given a velocity $\mathbf{u}$,
 a moving hypersurface ${\mathcal S}(t)$,
 and a time interval $[t_0,t_0+k]$,
 we want to decompose $\mathbb{R}^m$
 into \emph{flux sets},
 equivalence classes with the equivalence relation
 as the fluxing index of Lagrangian particles
 marked at the initial time $t_0$.
This problem is called \emph{Lagrangian particle classification}. 

On top of the above problem, consider a scalar field
 $f: \mathbb{R}^m\times\mathbb{R}\rightarrow\mathbb{R}$
 that satisfies the conservation law
 with respect to $\mathbf{u}$ in (\ref{eq:velODE}), 
\begin{equation}
  \label{eq:ConservationLaw}
  \partial_{t} f (\mathbf{x},t)
  + \nabla \cdot ( \mathbf{u} (\mathbf{x},t) f (\mathbf{x},t))=0.
\end{equation}

The \emph{Eulerian flux} (or \emph{flux}) of $f$ through
 a moving hypersurface $\mathcal{S} (t)$
 within a time interval $[t_0,t_{0}+k]$ can be expressed as
\begin{equation}
  \label{eq:EulerianFluxIntegral}
  \int_{t_0}^{t_0+k} \int_{\mathcal{S}(t)}
  f(\mathbf{x},t)
  \bigl[\mathbf{u} (\mathbf{x},t)- \partial_{t}\mathcal{S}(t)\bigr]
  \cdot \mathbf{n} (\mathbf{x},t)\, \dif \mathbf{x} \dif t,
\end{equation}
where
$\mathbf{n} (\mathbf{x},t)$ is the unit outward normal vector
of $\mathcal{S}(t)$ at $\mathbf{x}$;
see Definition \ref{def:OutwardNvecForSurf}. 

The notion of fluxes is ubiquitous
in transport, mixing, and other physical processes
such as Lagrangian coherent structures;
see, e.g., 
\cite{shadden2005definition,lipinski2010ridge,zhang2014oceanic,lin2017high,ding2021enhanced}.
The calculation of flux integrals is also of much importance
in developing numerical schemes such as
the finite volume (FV) methods
\cite{leveque2002finite,sarmin1963stability,schiesser2009compendium}
and volume-of-fluid (VOF) methods
\cite{zhang2013family}. 
Furthermore, error estimates of flux calculations
 are fundamental in the numerical analysis of
 FV methods \cite{bell1988unsplit,colella1990multidimensional}
 and VOF methods \cite{zhang2013donating,zhang2013highly}.

To calculate the Eulerian flux integral (\ref{eq:EulerianFluxIntegral}),
 one can solve the conservation law \eqref{eq:ConservationLaw}
 by an FV method
 to obtain the evolution of the scalar $f$
 over the time interval $[t_0, t_0+k]$
 and then evaluate (\ref{eq:EulerianFluxIntegral})
 via numerical quadrature.
This process probably also involves
 computing intersections of the moving hypersurface
 to the fixed control volumes
 and the interpolation of $f$ to desired surface patches. 
As such, the whole process can be very time-consuming.

To avoid
 numerically solving the conservation law (\ref{eq:ConservationLaw}), 
 one way is to convert
 the Eulerian flux integral (\ref{eq:EulerianFluxIntegral})
 to a Lagrangian flux integral via the flux identity
 \begin{equation}
   \label{eq:FluxIdentity}
   \int_{t_0}^{t_0+k} \int_{\mathcal{S} (t)}
   f(\mathbf{x},t)
   \bigl[\mathbf{u} (\mathbf{x},t) -\partial_{t}\mathcal{S} (t)\bigr]
   \cdot \mathbf{n} (\mathbf{x},t)
   \dif \mathbf{x} \dif t
   = \sum\limits_{n\in \mathbb{Z}\setminus\{0\}}
   n \int_{\mathcal{D}^{n}_{\mathcal{S}} (t_0,k)}^{}
   f (\mathbf{p},t_{0})\mathrm{d} \mathbf{p},
 \end{equation}
 where $\mathcal{D}^{n}_{\mathcal{S}} (t_0,k)$
 is the \emph{donating region (DR) of ${\mathcal S}(t)$
 of index $n$} \cite{zhang2013donating},
 a subset of $\mathbb{R}^m$ at the initial time $t_0$
 such that (\ref{eq:FluxIdentity}) holds;
 see Definition \ref{def:DR} for a precise definition
 and \cite[Fig. 4.3]{zhang2019lagrangian}
 for several illustrations. 
The right-hand side (RHS) is called the \emph{Lagrangian flux} 
 of (\ref{eq:EulerianFluxIntegral})
 because each integral domain $\mathcal{D}^{n}_{\mathcal{S}} (t_0,k)$
 can be constructed
 via tracing characteristic curves of the flow map $\phi$.

In (\ref{eq:FluxIdentity}), 
 the spatiotemporal integral on the left-hand side
 is converted to a spatial integral at the initial time on the RHS,
 obviating the time dependence of the scalar $f$
 in calculating its flux. 
Thus the flux identity (\ref{eq:FluxIdentity}) is useful in analyzing
 local truncation errors of unsplit multidimensional FV algorithms, 
 and it applies even when $f$ is discontinuous in space; 
 see \cite{zhang2013family} for such an analysis.
  
The problem of \emph{Lagrangian flux calculation} (LFC)
 consists of two parts:
 (i) selecting and proving a flux identity
 similar to (\ref{eq:FluxIdentity})
 and (ii) designing an efficient and accurate algorithm
 to calculate the flux (\ref{eq:EulerianFluxIntegral}).
On the one hand,
 we desire to select the flux identity
 whose form is best suited for numerical calculation;
 on the other hand, 
 the design of LFC algorithms should fully exploit
 the theoretical insights provided by the flux identity.
If our primary interest is not
 the evolution of the scalar over the entire computational domain
 but the dynamics in a \emph{local} region, 
 LFC could be much more flexible and efficient
 than the aforementioned FV approach for computing the Eulerian flux.

In his seminal work,
 Zhang \cite{zhang2013donating}
 gave an explicit construction of DRs in two dimensions
 and showed that (\ref{eq:FluxIdentity})
 holds if the time increment $k>0$
 is sufficiently small. 
Based on this analysis,
 he also proposed an LFC algorithm \cite{zhang2013highly}
 for solving scalar conservation laws
 with semi-Lagrangian methods. 
Later,
 he \cite{zhang2015generalized}
 removed the restrictive assumption of $k$ being sufficiently small.
Utilizing the concept of winding numbers, 
 these works are restricted to two dimensions; 
 the common steps are to 
\begin{enumerate}[(LFC2D.1)]
\item construct a closed curve called \emph{the generating curve of DRs}
 from a velocity field $\mathbf{u}(\mathbf{x},t)$,
 a time interval $[t_0, t_0+k]$, 
 and a static curve $\widetilde{LN}$,
\item define DRs as the equivalence classes
 of locations of Lagrangian particles at $t_0$
 with the equivalence relation being
 the winding numbers with respect to the generating curve in (LFC2D.1),
\item prove the index-by-index equivalence of flux sets
  to DRs in (LFC2D.2). 
\end{enumerate}

More recently,
 Karrasch and colleagues
 \cite{Karrasch2016Lagrangian,hofherr2018lagrangian}
 defined DRs from an alternative viewpoint,
 gave a proof of the flux identity (\ref{eq:FluxIdentity})
 for moving surfaces in two and higher dimensions, 
 and proposed an LFC algorithm \cite[Algorithm 1]{hofherr2018lagrangian}.
However,
 LFC with the flux identity (\ref{eq:FluxIdentity})
 necessitates computing intersections
 of the boundaries of DRs with different indices, 
 which can be arbitrarily ill-conditioned.
In addition, their LFC algorithm 
 only works in two dimensions with second-order accuracy
 and the generalizations to higher dimensions
 and higher accuracy are not obvious.

To overcome this ill-conditioning,
 Zhang and Ding \cite{zhang2019lagrangian}
 proposed a two-dimensional LFC algorithm,
 hereafter referred to as LFC-2019, 
 based on another flux identity
 \begin{equation}
   \label{eq:FluxIdentity1}
   \sum_{n\in \mathbb{Z}\setminus\{0\}}
   n \int_{\mathcal{D}^{n}_{\widetilde{LN}}(t_0,k)}
   f (x,y,t_{0})\,\dif x\dif y
   = \oint_{\gamma_{\mathcal{D}} (t_0,k)}
   {F} ({x},y,t_0)\dif y,
 \end{equation}
 where $\gamma_{\mathcal{D}}(t_{0},k)$ is
 the generating curve of the DR in (LFC2D.1)
 for the fixed curve $\widetilde{LN}$,
 the function
 $F(x,y,t_{0}):=\int_{\xi}^{x} f(s,y,t_{0})\,\dif s$
 satisfies
 $\frac{\partial F}{\partial x} = f $ 
 for any fixed real number $\xi$.
Although $\gamma_{\mathcal D}$ 
 can be considered as the boundary of the DR
 in (\ref{eq:FluxIdentity}),
 one can not deduce (\ref{eq:FluxIdentity1})
 directly from (\ref{eq:FluxIdentity}) and Green's theorem:
 $\gamma_{\mathcal D}$ may be self-intersecting
 while the boundary of the integral domain
 in Green's theorem must be simple closed.
Thanks to the RHS of (\ref{eq:FluxIdentity1})
 being a {line integral},
 LFC-2019 only consists of constructing
 the generating curve $\gamma_{\mathcal D}$
 and integrating $F$ along $\gamma_{\mathcal D}$.
In particular,
 it is free of computing intersections of DR boundaries. 
Consequently,
 LFC-2019 reduces to numerically solving ODEs
 and calculating weighted sums of function values of $f$
 at the initial time $t_0$.
Via approximating the generating curves with splines, 
 Zhang and Ding showed that
 LFC-2019 is well conditioned
 and can be second-, fourth-, and sixth-order accurate. 
See \cite{zhang2013donating,zhang2019lagrangian}
 for more details on the background and applications of LFC.

In this work,
 we solve the general problem of LFC for a moving hypersurface
 in Euclidean spaces $\mathbb{R}^m$ where $m\ge 2$.
More specifically, 
\begin{enumerate}[(A)]
\item we characterize flux sets
  as equivalence classes of topological degrees
  of a certain function, which is composed from 
  the flow map and the parametrization of the moving hypersurface; 
\item we generalize the flux identity 
  (\ref{eq:FluxIdentity1}) to three and higher dimensions
  by customizing the divergence theorem
  and the Reynolds transport theorem
  to self-intersecting hypersurfaces; 
\item we propose, based on (B), 
  a simple, highly accurate, and well conditioned 
  LFC algorithm for moving surfaces in three dimensions.
\end{enumerate}

To the best of our knowledge,
 the LFC algorithm in (C)
 is the first of its kind that applies to three dimensions
 and generalizes in a straightforward way
 to higher dimensions.
This generalization from two dimensions to higher dimensions, however, 
 is not straightforward, due to several main difficulties.
First, the winding number on which (LFC2D.1--3) rely
 is a concept dedicated to the complex plane. 
Although it is known that topological degrees
 are generalizations of winding numbers,
 it is nontrivial to formulate and prove flux identities
 with this abstract notion. 

Second,
 due to Cauchy's theorem in complex analysis, 
 the differentiability of a map $\mathbb{R}^2\rightarrow \mathbb{R}^2$
 immediately implies its analyticity or conformality, 
 which dictates that the orientation of a closed curve be preserved
 under the action of a diffeomorphism in two dimensions.
Therefore,
 a Jordan curve can be oriented \emph{extrinsically}
 according to its bounded and unbounded complements of the plane.
This extrinsic orientation does not hold in higher dimensions
 because a diffeomorphism $\mathbb{R}^m\rightarrow \mathbb{R}^m$
 with $m>2$ needs not preserve extrinsic orientations
 of a closed hypersurface.
For example, Smale \cite{smale1959classification}
 showed the existence of such a three-dimensional diffeomorphism
 that turns a sphere inside out.
Consequently,
 in order to generalize LFC to three and higher dimensions,
 one needs to be very careful
 in defining the orientation of the moving hypersurface. 


The rest of this paper is organized as follows.
In Section \ref{sec:preliminaries}, 
 we introduce notations,
 collect relevant definitions and results,
 and prepare the reader for subsequent sections.
In particular, we give a coherent exposition 
 on how to generalize the two-dimensional winding number to
 the higher-dimensional concept of topological degrees.
The importance of correctly orienting hypersurfaces 
 and cycles is emphasized by the existence of sphere eversion
 in Section \ref{sec:sphere-eversion}. 
In Section \ref{sec:analysis},
 we orient hypersurfaces in an \emph{intrinsic} manner,
 implicitly characterize flux sets via topological degrees,
 explicitly construct flux sets via generating cycles, 
 customize the divergence theorem and the Reynolds transport theorem
 for cycles with potential self-intersections,
 and prove the flux identity
 that is best suited for numerical flux calculations.
In Section \ref{sec:algorithm},
 we exploit the flux identity
 to propose a new LFC algorithm in three dimensions,
 elaborating on its algorithmic details. 
In Section \ref{sec:tests},
 various numerical tests are performed
 to validate the flux identity and to verify the new LFC algorithm.
Results of these tests demonstrate the efficiency,
 the good conditioning, 
 and the second-, fourth-, and sixth-order accuracy
 of the proposed LFC algorithm.
In Section \ref{sec:conclusion},
 we conclude this paper with several research prospects.



\section{Preliminaries}
\label{sec:preliminaries}

\subsection{Winding numbers}
\label{sec:winding-numbers}

A \emph{closed curve} 
 is the image of a continuous function
 $\gamma: [0, 2\pi]\rightarrow \mathbb{R}^2$
 with $\gamma(0)=\gamma(2\pi)$.
A \emph{Jordan curve} is a closed curve $\Gamma$
 whose parametrized function $\gamma$
 is injective on $[0,2\pi)$.
The Jordan curve theorem states that
 $\mathbb{R}^2\setminus \Gamma$ consists of
 only two components, one bounded and one unbounded,
 with $\Gamma$ being their common boundary.
A Jordan curve $\Gamma$ is \emph{positively oriented}
 if $\BCJ(\Gamma)$, the bounded complement of $\Gamma$,
 always lies to the left of an observer who traverses $\Gamma$
 according to $\gamma$;
 otherwise it is \emph{negatively oriented}. 

A closed curve can be viewed as the image
 of a Jordan curve $\Gamma$ under a continuous map
 $\chi: \mathbb{R}^2\rightarrow\mathbb{R}^2$.
The \emph{winding number of an oriented closed curve}
 $\chi(\Gamma)\subset \mathbb{R}^2\simeq \mathbb{C}$
 around a point $a\in \mathbb{R}^2\setminus \chi(\Gamma)$
 is the number of times it encircles $a$,
 i.e., 
 \begin{equation}
   \label{eq:windingNumber}
   w(\chi(\Gamma),a) := \frac{1}{2\pi\ii}
   \int_{\chi(\Gamma)} \frac{\dif z}{z-a}, 
 \end{equation}
 where $\ii=\sqrt{-1}$
 and $w(\chi(\Gamma),z)$ 
 is a constant integer in each connected component of
 $\mathbb{R}^2\setminus \chi(\Gamma)$
 \cite[p. 203]{rudin86:_real_compl_analy}.
In particular, we have 
  \begin{equation}
    \label{eq:windingNumberJordan}
    w(\Gamma,a) =
    \begin{cases}
      +1 & \text{if $a\in \BCJ(\Gamma)$
        and $\Gamma$ is positively oriented},
      \\
      -1 & \text{if $a\in \BCJ(\Gamma)$ and $\Gamma$ is negatively oriented},
      \\
      0  & \text{if $a$ belongs to the unbounded complement of $\Gamma$}.
    \end{cases}
  \end{equation}

A \emph{free homotopy}  in $\mathbb{R}^2$ between two closed curves
 parametrized as $\gamma_1$ and $\gamma_2$ 
 is a function $H_\chi:[0,1]^2\rightarrow \mathbb{R}^2$
 such that $H_\chi(\theta,0)=\gamma_1(2\pi\theta)$
 and $H_\chi(\theta,1)=\gamma_2(2\pi\theta)$
 for all $\theta$,
 and $H_\chi(0,t)=H_\chi(1,t)$ for all $t$.
Then $\gamma_1$ and $\gamma_2$ 
 are said to be \emph{freely homotopic}
 in $\mathbb{R}^2$. 
The most essential characterization of winding numbers is
 
\begin{theorem}
  \label{thm:Hopf}
  Let a point $a\in \mathbb{R}^2$ be given.
  Two closed curves $\gamma_1$ and $\gamma_2$
  are freely homotopic in $\mathbb{R}^2\setminus\{a\}$
  if and only if
  $w(\gamma_1, a) = w(\gamma_2, a)$.
\end{theorem}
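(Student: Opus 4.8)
The plan is to reduce the statement to the homotopy classification of self-maps of a circle, exploiting that $\mathbb{R}^2\setminus\{a\}$ deformation retracts onto a circle centered at $a$. Throughout, let $r_a(z):=\frac{z-a}{|z-a|}$ denote the radial retraction onto the unit circle $S^1$, and let $p:\mathbb{R}\to S^1$, $p(t)=e^{2\pi\ii t}$, be the exponential covering. The first thing I would record is that, for a merely continuous closed curve $\gamma$ avoiding $a$, the winding number $w(\gamma,a)$ of (\ref{eq:windingNumber}) equals $\tilde g(1)-\tilde g(0)\in\mathbb{Z}$, where $\tilde g:[0,1]\to\mathbb{R}$ is any continuous lift through $p$ of the normalized loop $t\mapsto r_a(\gamma(2\pi t))$; this agrees with (\ref{eq:windingNumber}) for piecewise-$C^1$ curves, and I would simply take it as the working definition of $w$ in the continuous category.

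For necessity, suppose $H_\chi:[0,1]^2\to\mathbb{R}^2\setminus\{a\}$ is a free homotopy from $\gamma_1$ to $\gamma_2$. I would post-compose with $r_a$ to obtain a homotopy $G(\theta,t):=r_a(H_\chi(\theta,t))$ of loops into $S^1$, apply the homotopy lifting property of $p$ to get $\tilde G:[0,1]^2\to\mathbb{R}$ with $p\circ\tilde G=G$, and observe that $\tilde G(1,t)-\tilde G(0,t)$ is an integer (because $G(0,t)=G(1,t)$) that depends continuously on $t$, hence is constant; its values at $t=0$ and $t=1$ are, by the working definition above, $w(\gamma_1,a)$ and $w(\gamma_2,a)$.

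For sufficiency, suppose $w(\gamma_1,a)=w(\gamma_2,a)=n$. Since free homotopy in $\mathbb{R}^2\setminus\{a\}$ is an equivalence relation, it suffices to connect each $\gamma_i$ to the standard loop $\delta_n(\theta):=a+e^{\ii n\theta}$. First I would slide $\gamma_i$ onto the unit circle $C_a$ about $a$ by the homotopy that linearly interpolates the modulus $|\gamma_i-a|$ to $1$ while freezing the argument of $\gamma_i-a$; this stays in $\mathbb{R}^2\setminus\{a\}$ because the interpolated modulus is positive throughout, and it produces a loop $\eta_i$ on $C_a$ of winding number $n$. Then I would lift $\eta_i$ through $p$ and run the affine homotopy in $\mathbb{R}$ from that lift to a map $t\mapsto c+nt$ (for the appropriate constant $c$): at every level the two endpoints differ by $n\in\mathbb{Z}$, so projecting back by $p$ keeps it a loop homotopy, showing $\eta_i$ is freely homotopic within $C_a$ to a rotated copy of $\delta_n$, which is in turn freely homotopic to $\delta_n$ by rotating $C_a$. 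Chaining $\gamma_1\simeq\eta_1\simeq\delta_n\simeq\eta_2\simeq\gamma_2$ completes the argument.

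The only step carrying genuine content is the sufficiency argument on $C_a$, i.e., the free-homotopy version of $\pi_1(S^1)\cong\mathbb{Z}$, which rests entirely on the path- and homotopy-lifting properties of the covering $p$; the remaining ingredients --- the explicit radial retraction of $\mathbb{R}^2\setminus\{a\}$ onto $C_a$, and the bookkeeping between the integral and the lifting descriptions of $w$ --- are routine. The point I would watch most carefully is \emph{free} versus \emph{based} homotopy: the construction above never fixes a basepoint, and it is precisely the commutativity of $\pi_1(S^1)$ that makes the free version as clean as the based one.
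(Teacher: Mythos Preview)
Your argument is correct and is essentially the standard covering-space proof of this classical fact: reduce to $S^1$ via the radial deformation retraction, then use path- and homotopy-lifting for the exponential covering $p:\mathbb{R}\to S^1$ to identify free homotopy classes with the integer $\tilde g(1)-\tilde g(0)$. Both directions are handled cleanly, and your remark about free versus based homotopy is apt.

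That said, there is nothing to compare against: the paper does not prove Theorem~\ref{thm:Hopf}. It is stated without proof as a known result (the Hopf theorem) in the preliminaries and then immediately invoked to deduce the argument principle. So your proposal supplies a proof where the paper simply cites the classical statement; the approach you take is the expected one and would be found in any standard reference on the fundamental group of the circle.
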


Denote by $\overline{{\mathcal D}}$
 the closure of a point set ${\mathcal D}$. 
The Hopf theorem and (\ref{eq:windingNumber}) lead to 
 
\begin{theorem}[Argument principle]
  \label{thm:argumentPrincipleAnalytic}
  For a positively oriented Jordan curve $\Gamma$,
  an analytic map 
  $\chi:\overline{\textrm{BCJ}(\Gamma)}
  \rightarrow\mathbb{C}$, 
  and a point $a\not\in \chi(\Gamma)$, 
  we have 
  \begin{equation}
    \label{eq:argumentPrincipleAnalytic}
    \begin{array}{l}
      w(\chi(\Gamma),a) = \sum_{z_j\in \chi^{-1}(a)} m_j,
    \end{array}
  \end{equation}
  where $m_j$ is the algebraic multiplicity of the preimage $z_j$. 
\end{theorem}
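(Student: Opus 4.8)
The plan is to obtain this classical argument principle from two ingredients already at hand: the Hopf theorem (Theorem~\ref{thm:Hopf}), which permits replacing $\chi(\Gamma)$ by any freely homotopic closed curve in $\mathbb{R}^2\setminus\{a\}$ without changing its winding number, and the local normal form of an analytic map at a zero of $\chi-a$, which is what produces the multiplicities $m_j$. Throughout I read ``$\chi$ analytic on $\overline{\BCJ(\Gamma)}$'' in the usual sense that $\chi$ is holomorphic on an open set $U\supseteq\overline{\BCJ(\Gamma)}$, and I regard $\chi(\Gamma)$ as the parametrized curve $\chi\circ\gamma$, so that (\ref{eq:windingNumber}) applies to it verbatim even when $\chi$ is not injective on $\Gamma$.

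\emph{Step 1: the preimage is a finite subset of the open region.} The constant case is immediate, since $\chi\equiv c\neq a$ forces $\chi^{-1}(a)=\emptyset$ and $w(\chi(\Gamma),a)=0$; so assume $\chi$ is nonconstant near $\overline{\BCJ(\Gamma)}$. Then the zeros of $\chi-a$ are isolated, hence finitely many, say $z_1,\dots,z_N$, in the compact set $\overline{\BCJ(\Gamma)}$, and the hypothesis $a\notin\chi(\Gamma)$ places every $z_j$ in the \emph{open} region $\BCJ(\Gamma)$. Near $z_j$ I factor $\chi(\zeta)-a=(\zeta-z_j)^{m_j}g_j(\zeta)$ with $g_j$ holomorphic and $g_j(z_j)\neq0$; this exponent $m_j$ is the algebraic multiplicity of the preimage.

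\emph{Step 2: homotope $\Gamma$ to a bouquet of circles, then apply Hopf and a local degree count.} By the Schoenflies theorem $\overline{\BCJ(\Gamma)}$ is homeomorphic to the closed unit disk, carrying $\Gamma$ to the unit circle, so $\overline{\BCJ(\Gamma)}\setminus\{z_1,\dots,z_N\}$ has the homotopy type of a wedge of $N$ circles and the free homotopy class of the positively oriented boundary $\Gamma$ is the concatenation of lassoed small positively oriented circles $\ell_1C_1\ell_1^{-1}\cdots\ell_NC_N\ell_N^{-1}$, where $C_j$ encircles $z_j$. Pushing this homotopy forward by $\chi$ gives a free homotopy in $\mathbb{R}^2\setminus\{a\}$, so Theorem~\ref{thm:Hopf}, together with the additivity of the integral (\ref{eq:windingNumber}) under concatenation (the backtracks $\ell_j$ and $\ell_j^{-1}$ cancel), yields $w(\chi(\Gamma),a)=\sum_{j=1}^{N}w(\chi(C_j),a)$. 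Taking $C_j=\{\,|\zeta-z_j|=\rho\,\}$ with $\rho$ small and substituting $z=\chi(\zeta)$ in (\ref{eq:windingNumber}), the factorization of Step~1 gives $w(\chi(C_j),a)=\frac{1}{2\pi\ii}\oint_{|\zeta-z_j|=\rho}\bigl(\frac{m_j}{\zeta-z_j}+\frac{g_j'(\zeta)}{g_j(\zeta)}\bigr)\dif\zeta=m_j$, since $g_j'/g_j$ is holomorphic near $z_j$; summing over $j$ finishes the proof.

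\emph{Expected main obstacle.} The delicate point is Step~2: $\Gamma$ is only assumed to be a topological Jordan curve (it may be highly irregular, even of positive area), so the reduction to a bouquet of round circles cannot be displayed explicitly and genuinely invokes Jordan--Schoenflies to straighten the domain; equivalently, in the residue-theorem phrasing one must justify deforming the possibly wild contour $\Gamma$ onto $\sum_j C_j$ across the region $\overline{\BCJ(\Gamma)}\setminus\bigcup_j D_j$ on which the holomorphic, hence closed, $1$-form $\frac{\chi'(\zeta)}{\chi(\zeta)-a}\,\dif\zeta$ is defined. If $\Gamma$ is assumed piecewise $C^1$, which holds in every application appearing later in this paper, this deformation is an elementary instance of Stokes' theorem and no Schoenflies is needed; I would therefore present the smooth case via Stokes' theorem and only remark that the fully general statement rests on Schoenflies.
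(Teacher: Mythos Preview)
Your proof is correct. The paper itself does not supply a proof of this theorem: it simply records the argument principle as a classical fact, prefacing the statement with ``The Hopf theorem and (\ref{eq:windingNumber}) lead to'' and nothing more. Your Step~1--Step~2 argument is precisely a fleshed-out version of that one-line hint---you invoke Theorem~\ref{thm:Hopf} to pass from $\Gamma$ to small circles around the preimages, and then compute each local winding number from (\ref{eq:windingNumber}) via the substitution $z=\chi(\zeta)$ and the factorization $\chi(\zeta)-a=(\zeta-z_j)^{m_j}g_j(\zeta)$. So your approach matches the paper's intended route exactly; you have simply supplied the details the authors omitted.

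One minor remark: your discussion of the ``expected main obstacle'' (Jordan--Schoenflies for a wild $\Gamma$) is well taken, but it is more than the paper itself worries about. In the applications downstream (Corollary~\ref{coro:windingNumberInTermsOfJf}, Lemma~\ref{lem:indexOfAnalyticFuncViaProductFormula}, etc.) $\Gamma$ is always at least piecewise $\mathcal{C}^1$, so the Stokes/residue version you sketch in your final paragraph already suffices for every use the paper makes of this theorem. You could safely state the theorem for piecewise-$\mathcal{C}^1$ Jordan curves and drop the Schoenflies remark without any loss for the paper's purposes.
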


Recall that a point $x_0\in \Omega\subset \mathbb{R}^m$
 is a \emph{critical point}
 of a ${\mathcal C}^1$ map $\chi:\Omega\rightarrow \mathbb{R}^m$
 if $J_\chi(x_0):=\det f'(x_0)=0$.
A value $a\in\mathbb{R}^m$ is called a \emph{regular value}
  of $\chi$ 
  if $\chi^{-1}(a)$ contains no critical points of $\chi$; 
  otherwise it is a \emph{singular value} of $\chi$. 
A point $x_0$ is not a critical point of an analytic function $\chi$
 if and only if the algebraic multiplicity of $\chi$ at $x_0$ is one.

 \begin{lemma}
   \label{lem:orientationPreserved}
   An analytic function $\chi:\mathbb{C}\rightarrow\mathbb{C}$
   is locally orientation-preserving
   at any $z_0$ that is not a critical point of $\chi$,
   i.e., $J_\chi(z_0)>0$.
 \end{lemma}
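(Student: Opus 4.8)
The plan is to reduce the claim to the Cauchy--Riemann equations together with a direct computation of the real Jacobian determinant. Write $z=x+\ii y$ and $\chi=u+\ii v$ with $u,v$ real-valued; since $\chi$ is analytic, $u$ and $v$ are smooth near $z_0$ and satisfy $u_x=v_y$ and $u_y=-v_x$ there. Viewing $\chi$ as a map $\mathbb{R}^2\rightarrow\mathbb{R}^2$, its derivative at $z_0$ is the matrix $\begin{pmatrix} u_x & u_y \\ v_x & v_y \end{pmatrix}$ evaluated at $z_0$, so $J_\chi(z_0)=u_x v_y - u_y v_x$.

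Next I would substitute the Cauchy--Riemann relations into this expression. The cross term cancels and one is left with $J_\chi(z_0)=u_x^2+v_x^2$, which equals $|\chi'(z_0)|^2$ because $\chi'(z_0)=u_x(z_0)+\ii v_x(z_0)$. In particular $J_\chi(z_0)\ge 0$ for every analytic $\chi$ at every point, with equality precisely when $\chi'(z_0)=0$.

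Finally, I would invoke the hypothesis: $z_0$ is not a critical point of $\chi$, which by the definition recalled just above the lemma means $J_\chi(z_0)\ne 0$. Combined with the nonnegativity from the previous step, this forces $J_\chi(z_0)>0$. Since a $\mathcal{C}^1$ map between open subsets of $\mathbb{R}^m$ is locally orientation-preserving at a point exactly when its Jacobian determinant there is positive (by the inverse function theorem $\chi$ is then a local diffeomorphism near $z_0$ whose derivative lies in the identity component of the general linear group), the lemma follows.

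There is essentially no technical obstacle here; the only point requiring care is the bookkeeping of sign conventions in the Cauchy--Riemann equations and in identifying $\chi'(z_0)$ with the entries of the real derivative matrix, so that the computation genuinely yields a sum of squares rather than a difference.
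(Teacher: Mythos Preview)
Your proof is correct. Both your argument and the paper's end with the same algebraic core: the Cauchy--Riemann equations force the real Jacobian to be a sum of squares, hence nonnegative, and strictly positive away from critical points. The paper, however, wraps this computation in an explicit geometric verification: it tracks an infinitesimal circle $\varphi(\theta)=z_0+re^{\ii\theta}$ through $\chi$, expands to first order, and compares the orientations of the two circles via a cross-product identity $(A\mathbf{u})\times(A\mathbf{v})=(\det A)\,\mathbf{u}\times\mathbf{v}$, before finally invoking Cauchy--Riemann to show $\det\chi'(z_0)=\alpha^2+\beta^2>0$. Your route is shorter and cleaner, going straight to the Jacobian and then appealing to the standard fact that positive Jacobian determinant is equivalent to local orientation preservation; the paper's route has the virtue of making the phrase ``orientation-preserving'' concrete for a reader who might not immediately equate it with $J_\chi>0$, which fits the expository aims of that preliminary section.
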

 \begin{proof}
   It suffices to show that $\chi$ maps an infinitesimal circle
   $\varphi(\theta)=z_0 + r e^{\ii \theta}$
   to another infinitesimal circle $\psi(\theta) = a + R e^{\ii\theta}$
   so that the Jordan curves $\varphi([0,2\pi])$ and $\psi([0,2\pi])$ 
   have the same orientation.
   Since $\chi$ is analytic and $J_\chi\ne 0$, $R$ is a positive constant.
   Then we have, as $r\rightarrow 0$, 
   \begin{displaymath}
    \psi(\theta) - a = \chi(\varphi(\theta)) - \chi(z_0)
    = \chi'(z_0) [\varphi(\theta) - z_0] + O(r^2), 
  \end{displaymath}
  the derivative of which yields
  $[\psi(\theta) - a]'
  = \chi'(z_0) [\varphi(\theta) - z_0]' + O(r)$.
  Since the cross product of two planar vectors
  $\mathbf{u},\mathbf{v}$ 
  is given by
  $    \mathbf{u}\times\mathbf{v}
    := (0, 0, \det [\mathbf{u}, \mathbf{v}])^T$,
  we have $(A\mathbf{u})\times(A\mathbf{v})
  = (0,0,\det A \det[\mathbf{u}, \mathbf{v}])^T$
  for any matrix $A\in \mathbb{R}^{2\times 2}$.
  Therefore, the orientation of the circle $\psi$
  is related to that of $\varphi$ by 
  \begin{displaymath}
    [\psi(\theta) - a]\times [\psi(\theta) - a]'
    = (J_\chi(z_0)+O(r^2))[\varphi(\theta) - z_0]\times [\varphi(\theta) - z_0]',
  \end{displaymath}
  where $J_\chi(z_0)=\det \chi'(z_0)$. 
  By the Cauchy-Riemann equation, we have
  \begin{displaymath}
    \chi'(z_0) = 
    \begin{bmatrix}
      \alpha & -\beta \\  \beta & \alpha
    \end{bmatrix}
    \quad \Rightarrow\quad
    J_\chi(z_0)=\det \chi'(z_0) = \alpha^2+\beta^2 > 0, 
  \end{displaymath}
  which completes the proof.
 \end{proof}
  
\begin{corollary}
  \label{coro:windingNumberInTermsOfJf}
  For an oriented Jordan curve $\Gamma$, 
  an analytic map
  $\chi: \overline{\textrm{BCJ}(\Gamma)}\rightarrow\mathbb{C}$,
  and a regular value $a$ of $\chi$ satisfying $a\not\in \chi(\Gamma)$, 
  we have
  \begin{equation}
    \label{eq:windingNumberViaJacobian}
    w(\chi(\Gamma),a) = \sum_{z_j\in \chi^{-1}(a)} \sgn J_\chi(z_j).
  \end{equation}
\end{corollary}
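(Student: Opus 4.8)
The plan is to read this off directly from the argument principle (Theorem~\ref{thm:argumentPrincipleAnalytic}) together with the orientation lemma (Lemma~\ref{lem:orientationPreserved}), taking $\Gamma$ to be positively oriented as in the hypothesis of Theorem~\ref{thm:argumentPrincipleAnalytic}; the negatively oriented case then follows by reversing the parametrization of $\Gamma$, which flips the winding number $w(\chi(\Gamma),a)$ and hence both sides of \eqref{eq:windingNumberViaJacobian}. Before invoking the argument principle, I would first record that the right-hand side is a well-defined finite sum: if $\chi$ were constant on $\overline{\BCJ(\Gamma)}$ with value $a$, then $a\in\chi(\Gamma)$, contradicting the hypothesis $a\notin\chi(\Gamma)$; hence $\chi$ is nonconstant, so its $a$-level set is discrete, and being a closed subset of the compact set $\overline{\BCJ(\Gamma)}$ it is finite.

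The core of the argument is to identify each algebraic multiplicity appearing in \eqref{eq:argumentPrincipleAnalytic} with a Jacobian sign. Since $a$ is a regular value of $\chi$ with $a\notin\chi(\Gamma)$, every preimage $z_j\in\chi^{-1}(a)$ lies in the open region $\BCJ(\Gamma)$ and is not a critical point of $\chi$. By the characterization recorded immediately before Lemma~\ref{lem:orientationPreserved}, the algebraic multiplicity $m_j$ of $\chi$ at such a $z_j$ equals $1$; and by Lemma~\ref{lem:orientationPreserved}, $J_\chi(z_j)>0$, so $\sgn J_\chi(z_j)=1=m_j$ for every $j$. Substituting this into the argument principle \eqref{eq:argumentPrincipleAnalytic} gives $w(\chi(\Gamma),a)=\sum_{z_j\in\chi^{-1}(a)}m_j=\sum_{z_j\in\chi^{-1}(a)}\sgn J_\chi(z_j)$, which is the claim.

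There is no genuine obstacle here: the substantive fact, that holomorphy forces $J_\chi>0$ at every noncritical point, has already been established in Lemma~\ref{lem:orientationPreserved} via the Cauchy--Riemann equations, and the rest is bookkeeping. The only points demanding a line of care are the reduction of an arbitrarily oriented $\Gamma$ to the positively oriented case assumed by Theorem~\ref{thm:argumentPrincipleAnalytic}, and the verification, from regularity of $a$ together with $a\notin\chi(\Gamma)$ and compactness of $\overline{\BCJ(\Gamma)}$, that no preimage lies on $\chi(\Gamma)$ or accumulates. I would emphasize in the writeup that although $\sgn J_\chi\equiv+1$ trivializes \eqref{eq:windingNumberViaJacobian} in this analytic setting, the form of the identity—a signed count of preimages weighted by Jacobian signs—is precisely the template that will survive the passage to topological degrees in $\mathbb{R}^m$, where the signs genuinely vary.
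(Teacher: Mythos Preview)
Your approach is the same as the paper's: regularity of $a$ forces each algebraic multiplicity $m_j=1$, Lemma~\ref{lem:orientationPreserved} gives $J_\chi(z_j)>0$ so $\sgn J_\chi(z_j)=1=m_j$, and Theorem~\ref{thm:argumentPrincipleAnalytic} finishes the positively oriented case. Your finiteness check for $\chi^{-1}(a)$ is a welcome bit of rigor the paper omits.

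One slip in your reduction: you claim that reversing the orientation of $\Gamma$ flips \emph{both} sides of \eqref{eq:windingNumberViaJacobian}, but the right-hand side $\sum_{z_j}\sgn J_\chi(z_j)$ is intrinsic to $\chi$ at interior points $z_j\in\BCJ(\Gamma)$ and does not see the orientation of $\Gamma$ at all; only the winding number on the left flips. Taken literally, then, \eqref{eq:windingNumberViaJacobian} fails for a negatively oriented $\Gamma$ whenever $\chi^{-1}(a)\neq\emptyset$. The paper's own proof glosses over the same point with the (incorrect) assertion that ``$\sgn J_\chi(z_j)=-1$'' in the negatively oriented case, so this is really a defect of the stated corollary rather than of your strategy. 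In practice the identity is only invoked with $\Gamma$ positively oriented, where your proof is complete.
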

\begin{proof}
  Lemma \ref{lem:orientationPreserved}
  gives $J_\chi(z_j)>0$,
  which further implies that $\sgn J_\chi(z_j)=+1$ or $-1$
  respectively for positively or negatively oriented $\Gamma$.
  Each $z_j$ has its algebraic multiplicity $m_j=1$.
  The rest follows from Theorem \ref{thm:argumentPrincipleAnalytic}
  and (\ref{eq:windingNumberJordan}). 
\end{proof}

For LFC through a static simple curve $\widetilde{LN}$ in $\mathbb{R}^2$, 
Zhang and Ding \cite{zhang2015generalized,zhang2019lagrangian}
 constructed a closed curve
 $\mathcal{G}_{\mathcal{D}}$
 from $\widetilde{LN}$, the velocity field $\mathbf{u}(\mathbf{x},t)$,
 and the time interval $(t_0,t_0+k)$,
 termed $\mathcal{G}_{\mathcal{D}}$ as the generating curve
 of donating regions,
 and defined donating regions
 as the equivalence classes of particles at $t_0$
 with respect to the winding numbers of $\mathcal{G}_{\mathcal{D}}$,
 i.e., 
 \begin{equation}
   \label{eq:DRviaGenCurves}
   \mathcal{D}_{\widetilde{LN}}^{n}(t_{0},k)
   :=\{\mathbf{p}(t_0)\;
   |\;w(\mathcal{G}_{\mathcal{D}},\mathbf{p}(t_0))=n\}.
 \end{equation} 
Using the Hopf theorem,
 they also showed the index-by-index equivalence
 of donating regions and flux sets. 
For LFC in three and higer dimensions,
this approach via winding numbers
clearly needs to be generalized.

\subsection{The topological degree}
\label{sec:topological-degree}

As a beautiful achievement of topology,
 the generalization
 of the winding number to the topological degree in $\mathbb{R}^m$
 spanned two centuries
 and involved many famous mathematicians
 such as Cauchy, Poincar\'{e},
 Brouwer, de Rham, and so on;
 see \cite[chap. 1]{outerelo2009mapping}
 for an excellent exposition on this history.
To make a long story short,
 we start from the axiomatization
 of three key features of winding numbers. 
 
\begin{theorem}
  \label{thm:topoDegreeRnExistence}
  There is at most one function $\deg: M\rightarrow\mathbb{Z}$, where 
  \begin{equation}
    \label{eq:topoDegreeRn}
    M:= \left\{(\chi,\Omega,y):
      \left\{
    \begin{array}{l}
      \Omega\subset \mathbb{R}^m \text{ open and bounded};
      \\
      \chi: \overline{\Omega} \rightarrow \mathbb{R}^m \text{ continuous};
      \\
      y\in \mathbb{R}^m \setminus \chi(\partial\Omega)
    \end{array}\right.
    \right\}, 
  \end{equation}
  that satisfies normalization, additivity, and homotopy, i.e., 
  \begin{enumerate}[(TPD-1)]
  \item $\deg(I,\Omega, y)=1$ for all $y\in\Omega$
    where $I$ is the identity map; 
  \item $\deg(\chi,\Omega,y) = \deg(\chi,\Omega_1,y) + \deg(\chi,\Omega_2,y)$
    if $\Omega_1$ and $\Omega_2$
    are disjoint open subsets of $\Omega$ such that
    $y\not\in \chi(\overline{\Omega}\setminus(\Omega_1\cup\Omega_2))$;
  \item $\deg(H(t,\cdot), \Omega, y(t))$
    is independent of $t\in[0,1]$ if both
    $H:[0,1]\times\overline{\Omega}\rightarrow \mathbb{R}^m$
    and $y:[0,1]\rightarrow \mathbb{R}^m$ are continuous
    and if $y(t)\not\in H(t,\partial\Omega)$ 
    for all $t\in[0,1]$. 
  \end{enumerate}
\end{theorem}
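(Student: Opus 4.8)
The plan is to show that any $\deg$ satisfying (TPD-1)--(TPD-3) is \emph{forced} to equal the signed count $\sum_{x\in\chi^{-1}(y)}\sgn J_\chi(x)$ on smooth maps with a regular value, and that this class of triples already pins $\deg$ down on all of $M$; since the signed count does not refer to the particular $\deg$, ``at most one'' follows. First I would squeeze out of the axioms the standard elementary facts: taking $\Omega=\Omega_1=\Omega_2=\emptyset$ in (TPD-2) gives $\deg(\cdot,\emptyset,y)=0$, hence (TPD-2) with $\Omega_2=\emptyset$ yields \emph{excision}, $\deg(\chi,\Omega,y)=\deg(\chi,U,y)$ whenever $U\subset\Omega$ is open and $y\notin\chi(\overline\Omega\setminus U)$, so in particular $\deg(\chi,\Omega,y)=0$ if $y\notin\chi(\overline\Omega)$, and (TPD-2) iterates to finite additivity over disjoint open sets; from (TPD-3) with $H(t,x)=\chi(x)-ty$, $y(t)=(1-t)y$ one gets $\deg(\chi,\Omega,y)=\deg(\chi-y,\Omega,0)$, and with $H$ constant in $x$ one gets that $\deg(\chi,\Omega,\cdot)$ is constant on connected components of $\mathbb{R}^m\setminus\chi(\partial\Omega)$.

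Next I would reduce to a smooth map and a regular value. For $(\chi,\Omega,y)\in M$ put $\delta=\mathrm{dist}(y,\chi(\partial\Omega))>0$ and choose, by Stone--Weierstrass or mollification, a $\mathcal C^\infty$ map $\widetilde\chi$ with $\sup_{\overline\Omega}|\chi-\widetilde\chi|<\delta/2$; the straight-line homotopy and (TPD-3) give $\deg(\chi,\Omega,y)=\deg(\widetilde\chi,\Omega,y)$, and since now $\mathrm{dist}(y,\widetilde\chi(\partial\Omega))\ge\delta/2$ while regular values of $\widetilde\chi$ are dense by Sard's theorem, I may pick a regular value $y'$ with $|y-y'|<\delta/2$, so that $\deg(\widetilde\chi,\Omega,y)=\deg(\widetilde\chi,\Omega,y')$ because the segment $[y,y']$ avoids $\widetilde\chi(\partial\Omega)$. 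After replacing $\widetilde\chi$ by $\widetilde\chi-y'$ it remains to evaluate $\deg(\widetilde\chi,\Omega,0)$ with $0$ a regular value of $\widetilde\chi$.

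Now localize and linearize. The set $\widetilde\chi^{-1}(0)$ is finite, say $\{x_1,\dots,x_N\}$; choosing disjoint balls with $\overline{B(x_i,r)}\subset\Omega$ and $0\notin\widetilde\chi(\overline{B(x_i,r)}\setminus\{x_i\})$, excision and finite additivity give $\deg(\widetilde\chi,\Omega,0)=\sum_i\deg(\widetilde\chi,B(x_i,r),0)$. On $B(x_i,r)$ the map $H(t,x)=\frac1t\,\widetilde\chi(x_i+t(x-x_i))$ for $t\in(0,1]$, $H(0,x)=A_i(x-x_i)$ with $A_i:=D\widetilde\chi(x_i)$, is continuous (the $\mathcal C^1$ Taylor estimate makes the $t\to0^+$ limit uniform) and, for $r$ small, never hits $0$ on $\partial B(x_i,r)$ because $A_i$ is invertible; hence $\deg(\widetilde\chi,B(x_i,r),0)=\deg(A_i(\cdot-x_i),B(x_i,r),0)$. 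The degree of an invertible affine map over a ball around its unique zero depends only on its linear part: excision raises it to a degree over a large ball $B(0,M)$, the homotopy $x\mapsto A_i(x-(1-s)x_i)$ slides the zero to the origin inside $B(0,M)$, and a further excision drops it to $\deg(A_i,B(0,\rho),0)$ for any $\rho>0$. A path in $\mathrm{GL}(m,\mathbb{R})$ from $A_i$ to $I$ (when $\det A_i>0$) or to the reflection $R_0:=\mathrm{diag}(-1,1,\dots,1)$ (when $\det A_i<0$) is a homotopy avoiding $0$ on $\partial B(0,\rho)$, so $\deg(A_i,B(0,\rho),0)$ is $\deg(I,B(0,\rho),0)=1$ by (TPD-1), or $\deg(R_0,B(0,\rho),0)$.

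The single substantive point, and the one I expect to be the main obstacle, is to extract $\deg(R_0,B(0,\rho),0)=-1$ from the axioms. I would use the auxiliary map $g(x)=(x_1^2-c,x_2,\dots,x_m)$ with a fixed $c\in(0,1)$ on the box $\Omega_0=(-2,2)\times(-1,1)^{m-1}$: on $\partial\Omega_0$ either $|x_1|=2$, so the first coordinate of $g$ is $4-c>0$, or some $|x_j|=1$ with $j\ge2$, so the $j$-th coordinate of $g$ is $\pm1$; hence $0\notin g(\partial\Omega_0)$. On one hand the homotopy $(x_1^2-c+s(c+1),x_2,\dots,x_m)$ keeps $0$ off $g(\partial\Omega_0)$ and terminates at a zero-free map, so $\deg(g,\Omega_0,0)=0$; on the other hand $g^{-1}(0)=\{(\pm\sqrt c,0,\dots,0)\}$ with Jacobians $\mathrm{diag}(\pm2\sqrt c,1,\dots,1)$, so the localization of the previous paragraph gives $\deg(g,\Omega_0,0)=1+\deg(R_0,B(0,\rho),0)$, whence $\deg(R_0,B(0,\rho),0)=-1$. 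Assembling the chain, for any admissible $\deg$ (and any fixed admissible choice of $\widetilde\chi$ and $y'$) one obtains $\deg(\chi,\Omega,y)=\sum_{x\in\widetilde\chi^{-1}(y')}\sgn\det D\widetilde\chi(x)$; as the right-hand side is the same for every admissible $\deg$, there is at most one. The only care needed beyond the reflection computation is the routine, if tedious, verification that every homotopy invoked above keeps its (possibly moving) target point off the image of the fixed boundary $\partial\Omega$.
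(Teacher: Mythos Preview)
Your proposal is correct and follows the standard uniqueness argument for the Brouwer degree. The paper itself does not give a proof of this theorem; it simply cites Deimling's \emph{Nonlinear Functional Analysis}, \S1, where exactly this argument (excision from additivity, reduction to a smooth map and regular value via the straight-line homotopy and Sard, localization and linearization at each preimage, and the $x_1^2-c$ trick to pin down the degree of a reflection) is carried out. So you have supplied the content the paper defers to a reference, and your route is essentially the one in that reference; there is nothing further to compare.
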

\begin{proof}
  See \cite[\S 1]{deimling10:_nonlin_funct_analy}.
\end{proof}


Such a function is constructed as follows.

\begin{definition}[Topological degree]
  \label{def:topoDegree}
  First, the \emph{topological degree} of $(\chi,\Omega, y_1)\in M$
  with $y_1$ being a regular value of $\chi\in {\mathcal C}^1(\Omega)$
  is given by
  \begin{equation}
    \label{eq:topoDegreeRegular}
    \deg(\chi,\Omega,y_1) := \sum_{x\in \chi^{-1}(y_1)} \sgn J_\chi(x), 
  \end{equation}
  where $J_\chi(x):=\det \chi'(x)$. 
  In particular, $\deg(\chi,\Omega,y)=0$ if $\chi^{-1}(y)=\emptyset$.

  Second, the \emph{topological degree} of $(g,\Omega, y)\in M$
  with $g\in {\mathcal C}^2(\Omega)$
  is defined as 
  \begin{equation}
    \label{eq:topoDegreeRegular2}
    \left\{
    \begin{array}{l}
      \deg(g,\Omega,y) := \deg(g,\Omega,y_1), \\
      |y_1-y|<\rho(y,g(\partial \Omega))
      := \min_{z\in g(\partial \Omega)} \|y-z\|_2, 
    \end{array}\right.
  \end{equation}
  where $y_1$ is a regular value of $g$
  and $\deg(g,\Omega,y_1)$ is given by (\ref{eq:topoDegreeRegular}).

  Finally, the \emph{topological degree} of $(\chi,\Omega, y)\in M$
  is defined as 
  \begin{equation}
    \label{eq:topoDegreeRegularFinal}
    \deg(\chi,\Omega,y) := \deg(g,\Omega,y), 
  \end{equation}
  where $g\in {\mathcal C}^2(\Omega)\cap {\mathcal C}\left(\overline{\Omega}\right)$
  is any map satisfying $\|g-\chi\|_{\infty}<\rho(y,\chi(\partial \Omega))$
  and $\deg(g,\Omega,y)$ is given by (\ref{eq:topoDegreeRegular2}).
\end{definition}

The first definition (\ref{eq:topoDegreeRegular})
clearly comes from Corollary \ref{coro:windingNumberInTermsOfJf}; 
 the second definition (\ref{eq:topoDegreeRegular2})
 is based on Sard's theorem
 that singular values form a set of measure zero; 
 the last definition (\ref{eq:topoDegreeRegularFinal})
 is reminiscent of Rouch\'{e}'s theorem in complex analysis.
Altogether, 
(\ref{eq:topoDegreeRegular}), (\ref{eq:topoDegreeRegular2}),
and (\ref{eq:topoDegreeRegularFinal}) form a sequence of well defined concepts 
that apply to the most general case of $\chi$ being merely continuous;
see \cite[\S 2]{deimling10:_nonlin_funct_analy}
for more details. 



\begin{theorem}[Product formula]
  \label{thm:productFormulaOfDegree}
  Suppose 
  $\chi\in {\mathcal C}\left(\overline{\Omega}\right)$
  where $\Omega\subset \mathbb{R}^m$ is open and bounded, 
  $g\in{\mathcal C}(\mathbb{R}^m)$,
  and $y\not\in (g\circ \chi)(\partial \Omega)$. 
  Then 
  \begin{equation}
    \label{eq:productFormulaOfDegree}
    \deg(g\circ \chi, \Omega, y)
    = \sum_i \deg(\chi,\Omega,K_i) \deg(g, K_i, y),
  \end{equation}
  where $K_i$'s are the bounded components
  of $\mathbb{R}^m\setminus \chi(\partial \Omega)$, 
  $\deg(\chi,\Omega,K_i)=\deg(\chi,\Omega,y_i)$
  for any $y_i\in K_i$, and 
  the summation has a finite number of nonzero terms.
\end{theorem}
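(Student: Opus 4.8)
The plan is to reduce to the case of smooth maps and then to read the identity off the chain rule at a well-chosen regular value. First I would invoke Definition~\ref{def:topoDegree}: taking a ${\mathcal C}^2$ map $\eta\colon\mathbb{R}^m\to\mathbb{R}^m$ with $\|\eta-g\|_\infty$ small on a neighbourhood of $\chi(\overline\Omega)$ and then a ${\mathcal C}^2$ map $\psi$ with $\|\psi-\chi\|_\infty$ small, the composition $\eta\circ\psi$ is ${\mathcal C}^2$ and can be made uniformly $\rho\big(y,(g\circ\chi)(\partial\Omega)\big)$-close to $g\circ\chi$, so $\deg(g\circ\chi,\Omega,y)=\deg(\eta\circ\psi,\Omega,y)$, while the right-hand side of (\ref{eq:productFormulaOfDegree}) is also unchanged (this last point is the delicate step, discussed at the end). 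Hence we may assume $\chi\in{\mathcal C}^2(\overline\Omega)$ and $g\in{\mathcal C}^2(\mathbb{R}^m)$.

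Next I would replace $y$ by a convenient nearby point. Since $\deg(g\circ\chi,\Omega,\cdot)$ is locally constant off $(g\circ\chi)(\partial\Omega)$ and each $\deg(g,K_i,\cdot)$ is locally constant off $g(\partial K_i)\subseteq(g\circ\chi)(\partial\Omega)$, it suffices to establish (\ref{eq:productFormulaOfDegree}) for any $y_1$ lying in the component of $\mathbb{R}^m\setminus(g\circ\chi)(\partial\Omega)$ that contains $y$. Let $\Sigma_\chi$ and $\Sigma_g$ denote the critical values of $\chi$ and of $g$; both are Lebesgue-null by Sard's theorem, and $g(\Sigma_\chi)$ is null because $g$ is locally Lipschitz, so I can pick such a point with $y_1\notin\Sigma_g\cup g(\Sigma_\chi)$. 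By the chain rule $J_{g\circ\chi}(x)=J_g(\chi(x))\,J_\chi(x)$, so $y_1$ is also a regular value of $g\circ\chi$, and (\ref{eq:topoDegreeRegular}) gives
\[
  \deg(g\circ\chi,\Omega,y_1)=\sum_{x\in(g\circ\chi)^{-1}(y_1)}\sgn J_g(\chi(x))\cdot\sgn J_\chi(x).
\]
For each such $x$, the point $z:=\chi(x)$ satisfies $g(z)=y_1\notin g(\chi(\partial\Omega))$, hence $z\notin\chi(\partial\Omega)$; moreover $z$ is a regular value of $\chi$ since $g(z)\notin g(\Sigma_\chi)$. Partitioning the sum according to $z=\chi(x)$ and then to the component $K_i$ of $\mathbb{R}^m\setminus\chi(\partial\Omega)$ containing $z$, and using $\sum_{x\in\chi^{-1}(z)}\sgn J_\chi(x)=\deg(\chi,\Omega,z)=\deg(\chi,\Omega,K_i)$ (which vanishes when $z$ is in the unbounded component, and where only $z\in\chi(\overline\Omega)$ can contribute), I obtain
\[
  \deg(g\circ\chi,\Omega,y_1)=\sum_i\deg(\chi,\Omega,K_i)\sum_{z\in g^{-1}(y_1)\cap K_i}\sgn J_g(z)=\sum_i\deg(\chi,\Omega,K_i)\,\deg(g,K_i,y_1),
\]
the last equality being (\ref{eq:topoDegreeRegular}) applied to $g$ on $K_i$, valid since $y_1$ is a regular value of $g$ with $y_1\notin g(\partial K_i)$.

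The finiteness of the nonzero terms is transparent here: $\deg(g,K_i,y_1)\neq0$ forces $g^{-1}(y_1)\cap K_i\neq\emptyset$, and $\deg(\chi,\Omega,K_i)\neq0$ forces $K_i\subseteq\chi(\overline\Omega)$ (otherwise some point of $K_i$ would have empty $\chi$-preimage and the locally constant degree on $K_i$ would be $0$); so the nonzero terms are indexed by distinct points of the finite set $g^{-1}(y_1)\cap\chi(\overline\Omega)$. Transferring from $y_1$ back to $y$ term by term then yields (\ref{eq:productFormulaOfDegree}) and shows the sum has finitely many nonzero terms for $y$ itself.

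The step I expect to be the main obstacle is the claim, used in the first paragraph, that the right-hand side of (\ref{eq:productFormulaOfDegree}) is stable under perturbing $\chi$ to $\psi$, since the family $\{K_i\}$ of components of $\mathbb{R}^m\setminus\chi(\partial\Omega)$ is itself \emph{not} stable. My plan for this is to discard all components with $\deg(\chi,\Omega,K_i)=0$ and, using the finiteness just proved, keep only the finitely many contributing components $K_1,\dots,K_N$; in each pick a point $y_i^{*}$ together with a closed ball $\overline{B(y_i^{*},r_i)}\subseteq K_i$, and set $r:=\min_i r_i>0$; then for $\|\psi-\chi\|_\infty<r$ the straight-line homotopy from $\chi$ to $\psi$ keeps every $y_i^{*}$ off the image of $\partial\Omega$, so the homotopy property gives $\deg(\psi,\Omega,y_i^{*})=\deg(\chi,\Omega,K_i)$, and $y_i^{*}$ sits in some component of $\mathbb{R}^m\setminus\psi(\partial\Omega)$ whose contribution, after a further shrinking of $\|\psi-\chi\|_\infty$ and of $\|\eta-g\|_\infty$, equals $\deg(\chi,\Omega,K_i)\,\deg(g,K_i,y)$; a counting argument then matches the two finite sums term by term, and the same bookkeeping accommodates the replacement of $g$ by $\eta$.
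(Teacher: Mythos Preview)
The paper does not give its own proof; it simply cites Deimling. Your regular-value computation (the second and third paragraphs) is correct and is essentially the standard argument found there: pick $y_1$ avoiding $\Sigma_g\cup g(\Sigma_\chi)$, apply the chain rule, and regroup the finite sum first by $z=\chi(x)$ and then by the component $K_i$ containing $z$. The finiteness argument is also fine.

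The genuine gap is in your final paragraph, and it is exactly where you suspect. Your plan---select representatives $y_i^{*}$ in the finitely many contributing $K_i$, use homotopy invariance to get $\deg(\psi,\Omega,y_i^{*})=\deg(\chi,\Omega,K_i)$, then ``match the two finite sums term by term''---does not control the components of $\mathbb{R}^m\setminus\psi(\partial\Omega)$ that contain \emph{none} of the $y_i^{*}$. Such a component $L$ may well satisfy $\deg(\psi,\Omega,L)\neq 0$ and $\deg(g,L,y)\neq 0$, and nothing in your sketch rules this out; conversely, two points $y_{i_1}^{*},y_{i_2}^{*}$ may land in the same $L$, and there is no reason $\deg(g,K_{i_1},y)+\deg(g,K_{i_2},y)$ should equal $\deg(g,L,y)$. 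The replacement $g\to\eta$ is harmless (the $K_i$ do not move, and $\|\eta-g\|_\infty<\rho(y,g(\chi(\partial\Omega)))$ preserves each $\deg(g,K_i,y)$), so the whole difficulty is in $\chi\to\psi$.

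A clean fix avoids term-by-term matching altogether: pass to the common refinement. Let $M_k$ denote the components of $\mathbb{R}^m\setminus\bigl(\chi(\partial\Omega)\cup\psi(\partial\Omega)\bigr)$. For $\delta:=\|\psi-\chi\|_\infty$ small enough one has $y\notin g(\psi(\partial\Omega))$ by uniform continuity of $g$, so additivity (TPD-2) gives $\deg(g,K_i,y)=\sum_{M_k\subset K_i}\deg(g,M_k,y)$ and likewise for each $L_j$. Both sums then collapse to $\sum_k c_k\,\deg(g,M_k,y)$ with $c_k$ equal to $\deg(\chi,\Omega,z_k)$ on one side and $\deg(\psi,\Omega,z_k)$ on the other, for any $z_k\in M_k$. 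If $M_k$ contains a point at distance greater than $\delta$ from $\chi(\partial\Omega)$, the straight-line homotopy shows these two degrees agree. If not, every $w\in M_k$ lies within $\delta$ of $\chi(\partial\Omega)$, so $g(w)$ lies within $\omega_g(\delta)$ of $(g\circ\chi)(\partial\Omega)$; hence $g^{-1}(y)\cap M_k=\emptyset$ and $\deg(g,M_k,y)=0$, so such $M_k$ contribute nothing to either side.
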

\begin{proof}
  See \cite[\S 5]{deimling10:_nonlin_funct_analy}.
\end{proof}

The \emph{index of a continuous map}
 $\chi\in{\mathcal C}\left(\overline{\mathcal B}_{r_0}(x_0)\right)$
 at $x_0$ is defined as
 \begin{equation}
   \label{eq:indexOfContMapRn}
   j(\chi,x_0) := \deg(\chi, {\mathcal B}_r(x_0), \chi(x_0)),
 \end{equation}
 where ${\mathcal B}_r(x)$ is the open $n$-ball
 with its center at $x$
 and its radius $r$ sufficiently small
 such that $\chi(x)\ne \chi(x_0)$
 for all $x\in\overline{\mathcal B}_r(x_0)\setminus \{x_0\}$.
As a topological invariant,
 the index $j(\chi,x_0)$ characterizes
 the local behavior of $\chi$ at $x_0$:
 (TPD-3) in Theorem \ref{thm:topoDegreeRnExistence}
 dictates that
 $\deg(\chi, {\mathcal B}_r(x_0), \chi(x_0))
 = \deg(\chi, \tilde{{\mathcal B}}(x_0), \chi(x_0))$
 for any small neighborhood
 ${\mathcal B}_r(x_0)$ of $x_0$. 

\begin{lemma}
  \label{lem:indexOfAnalyticFuncViaProductFormula}
  For an analytic function
  $\chi: \overline{{\mathcal B}_r(x_0)}\rightarrow \mathbb{C}$
  satisfying $\chi(x)\ne \chi(x_0)=0$
  for any $x\in {\mathcal B}_r(x_0)\setminus\{x_0\}$,
  the index $j(\chi,x_0)$ in (\ref{eq:indexOfContMapRn})
  reduces to $m_0$,
  the algebraic multiplicity of $\chi$ at $x_0$.
\end{lemma}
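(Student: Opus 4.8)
The plan is to factor $\chi$ locally through the power map $w\mapsto w^{m_0}$ and then invoke the product formula, Theorem~\ref{thm:productFormulaOfDegree}. Since $\chi$ is analytic near $x_0$ and does not vanish on a punctured neighborhood, $x_0$ is an isolated zero of algebraic multiplicity $m_0$, so we may write $\chi(z)=(z-x_0)^{m_0}h(z)$ with $h$ analytic and $h(x_0)\ne 0$. Shrinking $r$, we arrange that $h$ is nowhere zero on the simply connected disk $\mathcal{B}_r(x_0)$, hence that $h$ admits an analytic $m_0$-th root $h_1$ there; setting $\varphi(z):=(z-x_0)h_1(z)$ yields an analytic map with $\varphi(x_0)=0$, $\varphi'(x_0)=h_1(x_0)\ne 0$, and $\chi=g\circ\varphi$ on $\mathcal{B}_r(x_0)$, where $g(w):=w^{m_0}$. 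By the analytic inverse function theorem I would shrink $r$ once more so that $\varphi$ is injective on $\overline{\mathcal{B}_r(x_0)}$ with $\varphi'\ne 0$ everywhere on this closed disk; Lemma~\ref{lem:orientationPreserved} then gives $J_\varphi>0$ throughout. These shrinkings leave $j(\chi,x_0)=\deg\!\bigl(\chi,\mathcal{B}_r(x_0),0\bigr)$ unchanged, by (TPD-3) and the remark following \eqref{eq:indexOfContMapRn}, so we may assume all of this from the outset.

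Next I would locate the bounded components of $\mathbb{R}^m\setminus\varphi(\partial\mathcal{B}_r(x_0))$ entering the product formula. Since $\varphi$ is a homeomorphism of $\overline{\mathcal{B}_r(x_0)}$ onto its image, $\Gamma':=\varphi(\partial\mathcal{B}_r(x_0))$ is a Jordan curve, and by invariance of domain together with the Jordan curve theorem one checks that $\varphi(\mathcal{B}_r(x_0))$ is exactly the unique bounded complement $\BCJ(\Gamma')$. Because $\chi=g\circ\varphi$ does not vanish on $\partial\mathcal{B}_r(x_0)$, Theorem~\ref{thm:productFormulaOfDegree} applies and collapses to a single term,
\[
  \deg\!\bigl(\chi,\mathcal{B}_r(x_0),0\bigr)
  =\deg\!\bigl(\varphi,\mathcal{B}_r(x_0),w_0\bigr)\,
   \deg\!\bigl(g,\BCJ(\Gamma'),0\bigr)
  \qquad\text{for any }w_0\in\BCJ(\Gamma').
\]
Taking $w_0\in\varphi(\mathcal{B}_r(x_0))$, the preimage $\varphi^{-1}(w_0)$ is a single point at which $J_\varphi>0$, so $w_0$ is a regular value of $\varphi$ and \eqref{eq:topoDegreeRegular} gives $\deg(\varphi,\mathcal{B}_r(x_0),w_0)=1$.

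It then remains to evaluate $\deg(g,\BCJ(\Gamma'),0)$ for $g(w)=w^{m_0}$. Since $g^{-1}(0)=\{0\}$ and $0=\varphi(x_0)\in\BCJ(\Gamma')$, additivity (TPD-2) reduces this to $\deg(g,\mathcal{B}_\varepsilon(0),0)$ for small $\varepsilon>0$; any $\delta$ with $0<|\delta|<\varepsilon^{m_0}$ is a regular value of $g$ (as $g'(w)=m_0w^{m_0-1}\ne 0$ for $w\ne 0$), and $g^{-1}(\delta)$ consists of the $m_0$ distinct $m_0$-th roots of $\delta$, all lying in $\mathcal{B}_\varepsilon(0)$, each contributing $\sgn J_g=+1$ by Lemma~\ref{lem:orientationPreserved}; hence \eqref{eq:topoDegreeRegular}--\eqref{eq:topoDegreeRegular2} give $\deg(g,\mathcal{B}_\varepsilon(0),0)=m_0$. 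Combining the two factors yields $j(\chi,x_0)=1\cdot m_0=m_0$. I expect the only genuinely delicate point to be the point-set topology step: verifying that, after the shrinkings, $\varphi$ is a homeomorphism of the closed disk whose boundary image is a Jordan curve whose bounded complement is precisely $\varphi(\mathcal{B}_r(x_0))$, so that the product formula contributes exactly one term; everything else is routine bookkeeping with the axioms (TPD-1)--(TPD-3) and the regular-value formula \eqref{eq:topoDegreeRegular}.
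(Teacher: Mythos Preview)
Your proof is correct and takes a genuinely different route from the paper's. Both arguments invoke the product formula (Theorem~\ref{thm:productFormulaOfDegree}), but with opposite factorizations. The paper \emph{pre}composes with the trivial affine rescaling $g(z)=x_0+qz$, reducing $j(\chi,x_0)$ to $\deg(\chi\circ g,\mathcal{B}_1(0),0)$; the substantive step is then to identify this degree with the winding number $w(\chi(\partial\mathcal{B}_q(x_0)),0)$ and read off $m_0$ directly from the argument principle (Theorem~\ref{thm:argumentPrincipleAnalytic}). You instead \emph{post}compose: you factor $\chi=g\circ\varphi$ with $g(w)=w^{m_0}$ and $\varphi$ a local biholomorphism, so that the product formula itself does the essential work, splitting the degree into $\deg(\varphi,\cdot,\cdot)=1$ times $\deg(w\mapsto w^{m_0},\cdot,0)=m_0$, each factor computed from the regular-value formula~\eqref{eq:topoDegreeRegular}. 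Your route is longer and requires the local normal-form factorization plus a little point-set topology, but it is more self-contained within degree theory: it never calls on the argument principle as an external input, effectively re-deriving the special case needed here from the axioms (TPD-1)--(TPD-3) alone. The paper's route is shorter precisely because it leans on that classical complex-analytic result.
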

\begin{proof}
  Construct a function $g(z) = x_0 + q z$ where $q\in(0,r)$. 
  Then we have
  \begin{displaymath}
    \begin{array}{rl}
      j(\chi, x_0) &= \deg(\chi, {\mathcal B}_q(x_0), 0)
                  = \deg(\chi\circ g, {\mathcal B}_1(0), 0)
      \\
                   & = w(\chi(g(\partial {\mathcal B}_1(0))),0 )
                     = w(\chi(\partial {\mathcal B}_q(0))),0)
                     = m_0,
    \end{array}
  \end{displaymath}
  where the first step follows from (\ref{eq:indexOfContMapRn}), 
  the second from Theorem \ref{thm:productFormulaOfDegree},
  the third from Definition \ref{def:topoDegree}
  and Corollary \ref{coro:windingNumberInTermsOfJf}, 
  and the last from Theorem \ref{thm:argumentPrincipleAnalytic}.
\end{proof}

As an alternate interpretation of
 the argument principle (\ref{eq:argumentPrincipleAnalytic}),
 the winding number $w(\chi(\Gamma),a)$
 is the number of preimages of $a$ under the analytic map $\chi$, 
 counted with algebraic multiplicities.
More generally,
 the topological degree $\deg(\chi,\Omega,y)$
 in Definition \ref{def:topoDegree}
 is the number of preimages of $y$
 under the continuous map $\chi$,
 counted with its indices in (\ref{eq:indexOfContMapRn}),
 i.e.,
 \begin{equation}
   \label{eq:arguPrincipleViaTopoDegree}
   \begin{array}{l}
     \deg(\chi,\Omega,y) = \sum_{z_j\in \chi^{-1}(y)} j(\chi, z_j).
   \end{array}
 \end{equation}

However, there is a prominent difference
 between analytic and continuous maps.
In the former case, 
 Theorem \ref{thm:argumentPrincipleAnalytic} furnishes an explicit
 algorithm for locating solutions of the equation
 $\chi(x)=a$: 
 draw a positively oriented Jordan curve $\Gamma$,
 map $\Gamma$ to the closed curve $\chi(\Gamma)$,
 and deduce from (\ref{eq:windingNumberJordan}) and 
 (\ref{eq:argumentPrincipleAnalytic}) that
 $w(\chi(\Gamma),a)$ equals the number of preimages of $a$
 in BCJ$(\Gamma)$, 
 counted with their algebraic multiplicity.
 
For a continuous map $\chi$,
the strongest statement of such nature is
\begin{equation}
  \label{eq:numberOfSolutionsCont}
  w(\chi(\Gamma),a)\ne 0 \ \Rightarrow\
  \chi^{-1}(a)\cap \textrm{BCJ}(\Gamma)\ne \emptyset.
\end{equation}

This weakening of (\ref{eq:argumentPrincipleAnalytic})
 and (\ref{eq:windingNumberJordan}) to (\ref{eq:numberOfSolutionsCont})
 is due to the fact that,
 in contrast to the algebraic multiplicity of an analytic map
 being always nonnegative,
 the topological multiplicity of a continuous map
 can be both positive and negative.
For example, the complex map $\chi(x+\ii y)=x+\ii |y|$
 with $x,y\in \mathbb{R}$
 is not analytic but continuous,
 and the equation $\chi(z) = a\ii$ with $a>0$
 has two solutions $z=\pm a\ii$. 
For a positively oriented
 Jordan curve $\Gamma$ with $\pm a\ii\in \text{BCJ}(\Gamma)$, 
 we have $w(\chi(\Gamma),a\ii)= 0$ 
 yet nonzero indices:
 $j(\chi, a\ii)=+1$ and $j(\chi,-a\ii)=-1$. 
 
\begin{corollary}
  \label{coro:TopoDegreeReduceToWindingNumbers}
  For a continuous function $\chi:\overline{\Omega}\rightarrow\mathbb{R}^2$
  that is ${\mathcal C}^1$ on $\Omega=\text{BCJ}(\Gamma)$, 
  its topological degree in Definition \ref{def:topoDegree}
  reduces to the winding number in (\ref{eq:windingNumber}). 
\end{corollary}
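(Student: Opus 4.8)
The plan is to show that, for a continuous $\chi:\overline{\Omega}\to\mathbb{R}^2$ with $\Omega=\mathrm{BCJ}(\Gamma)$ and $\chi\in{\mathcal C}^1(\Omega)$, and for a regular value $a\notin\chi(\partial\Omega)=\chi(\Gamma)$, the formula \eqref{eq:topoDegreeRegular} defining $\deg(\chi,\Omega,a)$ literally agrees, term by term, with the right-hand side of \eqref{eq:windingNumberViaJacobian} in Corollary~\ref{coro:windingNumberInTermsOfJf}; since the latter equals $w(\chi(\Gamma),a)$, we are done for regular values, and the general case follows by the stability-under-perturbation clauses \eqref{eq:topoDegreeRegular2} and \eqref{eq:topoDegreeRegularFinal} together with the corresponding continuity of the winding number in $a$ on each component of $\mathbb{R}^2\setminus\chi(\Gamma)$.

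First I would fix $\Gamma$ positively oriented (if it is negatively oriented, replace $\Gamma$ by its orientation reversal and note both $\deg$ and $w$ change sign, so the identity is unaffected). For a regular value $a$ of $\chi\in{\mathcal C}^1(\Omega)$ with $a\notin\chi(\Gamma)$, the preimage $\chi^{-1}(a)$ is a finite set $\{z_1,\dots,z_N\}\subset\Omega$ by properness on $\overline\Omega$ and the inverse function theorem. Then \eqref{eq:topoDegreeRegular} gives
\begin{equation*}
  \deg(\chi,\Omega,a)=\sum_{j=1}^N\sgn J_\chi(z_j),
\end{equation*}
while Corollary~\ref{coro:windingNumberInTermsOfJf} — applied with the analytic hypothesis there weakened, see the obstacle below — yields
\begin{equation*}
  w(\chi(\Gamma),a)=\sum_{j=1}^N\sgn J_\chi(z_j),
\end{equation*}
and the two coincide. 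Next, for a value $a$ that is merely in $\mathbb{R}^2\setminus\chi(\Gamma)$, pick $a_1$ a regular value of a ${\mathcal C}^2$ approximant $g$ of $\chi$ with $|a_1-a|<\rho(a,\chi(\Gamma))$ and $\|g-\chi\|_\infty<\rho(a,\chi(\Gamma))$ as in Definition~\ref{def:topoDegree}; by construction $\deg(\chi,\Omega,a)=\deg(g,\Omega,a_1)=w(g(\Gamma),a_1)$ by the regular-value case just proved. Finally, because the straight-line homotopy $(1-s)\chi+sg$ stays away from $a$ on $\Gamma$ (the perturbation is smaller than $\rho$), $g(\Gamma)$ and $\chi(\Gamma)$ are freely homotopic in $\mathbb{R}^2\setminus\{a\}$, and $a_1$ lies in the same component of $\mathbb{R}^2\setminus\chi(\Gamma)$ as $a$; two applications of the Hopf theorem (Theorem~\ref{thm:Hopf}) give $w(g(\Gamma),a_1)=w(\chi(\Gamma),a_1)=w(\chi(\Gamma),a)$, closing the chain.

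The main obstacle is that Corollary~\ref{coro:windingNumberInTermsOfJf} as stated assumes $\chi$ \emph{analytic}, whereas here $\chi$ is only ${\mathcal C}^1$, so $\sgn J_\chi(z_j)$ need not be $+1$ and Lemma~\ref{lem:orientationPreserved} does not apply. The fix is to prove the ${\mathcal C}^1$ version of \eqref{eq:windingNumberViaJacobian} directly: near each regular preimage $z_j$, $\chi$ is a ${\mathcal C}^1$ diffeomorphism onto a neighborhood of $a$, orientation-preserving exactly when $J_\chi(z_j)>0$, so $j(\chi,z_j)=\sgn J_\chi(z_j)$ by the homotopy invariance of the index; then additivity \eqref{eq:arguPrincipleViaTopoDegree} summed over $\chi^{-1}(a)$ gives the degree, and the degree of a planar map equals the winding number of the boundary image via the integral representation \eqref{eq:windingNumber} (this last equivalence is the standard two-dimensional identification of $\deg$ with $w$, which I would either cite from \cite{deimling10:_nonlin_funct_analy} or derive by Stokes' theorem applied to $d\arg(z-a)$ on $\Omega\setminus\bigcup_j{\mathcal B}_\varepsilon(z_j)$). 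I expect the bookkeeping of orientations — ensuring the sign conventions in $\deg$, in $w$, and in $\Gamma$ are mutually consistent — to be the only genuinely delicate point; everything else is an assembly of results already in the excerpt.
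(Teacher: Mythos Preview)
Your plan is sound and your identification of the ``obstacle'' is exactly right, but the paper's own proof takes a very different shortcut: it simply asserts that ``by Cauchy's theorem, ${\mathcal C}^1$ complex functions are analytic,'' and then applies the analytic machinery already in place---Corollary~\ref{coro:windingNumberInTermsOfJf} for preimages with $J_\chi\ne 0$ and Lemma~\ref{lem:indexOfAnalyticFuncViaProductFormula} together with \eqref{eq:arguPrincipleViaTopoDegree} for preimages with $J_\chi=0$. In other words, the paper reads ${\mathcal C}^1$ as complex differentiability (hence holomorphy), which collapses the corollary to a two-line bookkeeping exercise and also handles singular preimages via algebraic multiplicity. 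Your route instead treats $\chi$ as a genuinely real ${\mathcal C}^1$ map, proves the local index formula $j(\chi,z_j)=\sgn J_\chi(z_j)$ from scratch, and then closes the gap between degree and winding number via Stokes and the Hopf theorem; this is longer but more honest, since the composite map $\chi$ in \eqref{eq:mapchi} that the paper ultimately cares about is a real flow-map construction with no reason to satisfy the Cauchy--Riemann equations. So your approach buys generality (it actually covers the maps used later), while the paper's buys brevity at the cost of an interpretive sleight of hand on what ``${\mathcal C}^1$'' means. Your perturbation-and-homotopy treatment of non-regular values is also more explicit than what the paper offers.
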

\begin{proof}
  By Cauchy's theorem,
  ${\mathcal C}^1$ complex functions are analytic.
  At each preimage $z_j$ of $y$,
  either $J_\chi(z_j)\ne 0$ or $J_\chi(z_j)= 0$.
  The former case is covered by the same form
  of (\ref{eq:windingNumberViaJacobian}) and
  (\ref{eq:topoDegreeRegular})
  while the latter case
  by Lemma \ref{lem:indexOfAnalyticFuncViaProductFormula}
  and (\ref{eq:arguPrincipleViaTopoDegree}). 
\end{proof}

Corollary \ref{coro:TopoDegreeReduceToWindingNumbers}
 also holds if $\chi$ is not ${\mathcal C}^1$
 but merely continuous. 
However, in this work, 
 Definition \ref{def:topoDegree} is only applied
 to the function $\chi$ in (\ref{eq:mapchi}), 
 which is also assumed to be ${\mathcal C}^1$.
Thus Corollary \ref{coro:TopoDegreeReduceToWindingNumbers}
 suffices to show that winding numbers
 are a special family of topological degrees in two dimensions. 
 
Unlike complex functions, 
 a ${\mathcal C}^1$ function $\phi:\mathbb{R}^m\rightarrow\mathbb{R}^m$
 is not automatically analytic for $m>2$, 
 in which case 
 Lemma \ref{lem:orientationPreserved} may not hold.
Fortunately, we show in Lemma \ref{lem:JacobiGreaterThanZero}
 that $J_{\phi}>0$ 
 if $\phi$ is the flow map of a ${\mathcal C}^1$ velocity
 in (\ref{eq:velODE}).  

\subsection{Immersion and sphere eversion}
\label{sec:sphere-eversion}

The \emph{immersion of a differentiable manifold
  ${\mathcal M}$ in $\mathbb{R}^m$}
 is a map $g: {\mathcal M}\rightarrow \mathbb{R}^m$
 such that at every $\mathbf{p}\in{\mathcal M}$
 its derivative
 $\dif g|_{\mathbf{p}} :
 T_{\mathbf{p}}{\mathcal M}\rightarrow T_{g(\mathbf{p})}\mathbb{R}^m$
 is an injective map, 
 where $T_{\mathbf{p}}{\mathcal M}$
 is the tangent space of ${\mathcal M}$ at $\mathbf{p}$.
Although $g$ needs not to be injective, 
 the implicit function theorem implies that 
 $g$ is locally a homeomorphism and thus a local embedding.
For example,
 any non-orientable closed surface such as the Klein bottle
 cannot be embedded in $\mathbb{R}^3$
 but can be immersed in $\mathbb{R}^3$. 
 
A \emph{regular homotopy
 between two immersions $g$ and $h$}
 from ${\mathcal M}$ to $\mathbb{R}^m$
 is a differentiable function
 $H: {\mathcal M}\times[0,1]\rightarrow \mathbb{R}^m$
 such that for every $t\in[0,1]$
 the function $H_t:{\mathcal M}\rightarrow \mathbb{R}^m$
 given by $H_t(\mathbf{x}):= H(\mathbf{x},t)$ is an immersion
 with $H_0=g$ and $H_1=h$.
Thus a regular homotopy
 is a homotopy of manifolds through immersions.

By the Whitney-Graustein theorem \cite{whitney37}, 
the regular homotopy classes of immersions
of the circle $\mathbb{S}^1$ in $\mathbb{R}^2$
are classified by the winding number. 
Thus
a differentiable closed curve with one orientation
is never regular homotopic
with another closed curve with the other orientation,
which, in the context of LFC, means that
a diffeomorphic flow map
never turns the Jordan curve inside out. 
Consequently,
 one can utilize this to simplify the matter of orienting a Jordan curve: 
 the outward normal vector can be determined
 \emph{once and for all} as pointing from the bounded complement
 to the unbounded complement.
This \emph{extrinsic} choice of the normal direction 
 always comply with the convention of LFC that
 flux be calculated from one side to the other.
In addition, 
 this extrinsic orientation simplifies LFC itself: 
 to cross the Jordan curve from one complement
 to the other twice, 
 a particle must return to its original complement, 
 but then this extrinsic orientation
 implies that the fluxing index be zeroed out 
 before the second crossing. 
Therefore, the only nonzero donating regions
 for a Jordan curve are 
 $\mathcal{D}_{\gamma}^{+1} (t_{0},k)$
 and $\mathcal{D}_{\gamma}^{-1} (t_{0},k)$
 and the Lagrangian flux reduces to
\cite[Corollary 10]{zhang2013highly}
\begin{displaymath}
  \begin{array}{l}
  \int_{t_0}^{t_0+k} \oint_{\gamma (t)}
  \chi(\mathbf{x},t) \bigl[
  \mathbf{u} (\mathbf{x},t) -\partial_{t}\gamma
  \bigr] \cdot \mathbf{n} (\mathbf{x},t) \dif \mathbf{x} \dif t
  =  \sum_{n=\pm 1}n\int_{\mathcal{D}_{\gamma}^{n} (t_{0},k)}
  f (\mathbf{x},t_{0})\,\dif \mathbf{x}.
  \end{array}
\end{displaymath}
In particular, provided that
two moving Jordan curves coincide both at $t_0$ and at $t_e$, 
their Lagrangian fluxes are the same.


As for ${\mathcal C}^2$ immersions
 of the sphere $\mathbb{S}^2$ in $\mathbb{R}^3$,
Smale \cite{smale1959classification} proved that
 any two such immesions are regularly homotopic. 
Thus there exists a sphere eversion,
 the process of turning a sphere inside out in $\mathbb{R}^3$
 without tearing or creasing on the sphere.
The first constructed example was exhibited by
Shapiro and Morin \cite{francis1980arnold};
see also the exquisite book and video
 by Levy and Thurston \cite{levy1995making}
 using the ``belt-trick.''
For recent developments of sphere eversion, 
 the reader is referred to \cite{litman2017sphere}. 

What is the consequence of sphere eversion on LFC 
 for a moving surface in $\mathbb{R}^3$?  
Clearly 
 extrinsic orientations of a closed surface
 via its complements of $\mathbb{R}^3$ 
 are no longer appropriate. 
 
As a simple counterexample,
 assume $\mathbf{u} (\mathbf{x},t) =\mathbf{0}$
 and $f(\mathbf{x},t) =1$.
Then the flow map is the identity 
$\phi_{t_0}^{+\tau k}(\mathbf{x})=\mathbf{x}$
for any $\tau\in[0,1]$. 
Consider two simple closed surfaces.
The first is the unit sphere fixed for all $t\in [t_{0},t_{0}+k]$;  
the corresponding flux is clearly zero.
The second is a sphere turned inside out once in $[t_{0},t_{0}+k]$
but coincide with the unit sphere both at $t_0$ and $t_0+k$;
in this case we have
\begin{displaymath}
  \begin{array}{l}
  \int_{t_0}^{t_0+k} \oint_{\mathcal{S}_{2} (t)}
  \chi(\mathbf{x},t) \bigl[
  \mathbf{u} (\mathbf{x},t) -\partial_{t}\mathcal{S}_{2}
  \bigr] \cdot \mathbf{n} (\mathbf{x},t)\, \dif \mathbf{x} \dif t
  =2 \int_{\mathcal{D}_{\mathcal{S}_{2}}^{2} (t_{0},k)}
  f (\mathbf{x},t_{0})\dif \mathbf{x}
  = \frac{8}{3}\pi,
  \end{array}
\end{displaymath}
where 
$\mathcal{D}_{\mathcal{S}_{2}}^{2} (t_{0},k)
=\left\{ \mathbf{x}\ |\ |\mathbf{x}|  < 1 \right\}$ is the unit ball. 

To sum up,
 the generalization of LFC from a fixed curve in $\mathbb{R}^2$
 to a moving hypersurface in $\mathbb{R}^m$
 is not straightforward,
 mostly because the switching of gears from winding numbers
 to topological degrees necessitates
 the tackling of a number of subtle issues
 that are covered up by the simple topology in two dimensions.
In particular, it is no longer adequate
 to use extrinsic orientations for closed surfaces.
The coordinate system on the moving surface
 must be oriented \emph{intrinsically}
 from the parametrization of the surface.



\section{Analysis}
\label{sec:analysis}

In this section, 
we give intrinsic orientations to hypersurfaces and cycles,
characterize fluxing sets by topological degrees, 
derive integration formulas on cycles, 
and prove a flux identity
that is best suited for numerical LFC algorithms.



\subsection{Orienting hypersurfaces and cycles}
\label{sec:Hypersurf}

Hereafter we denote by $\mathbb{B}^m:=(0,1)^{m}$ the open $m$-cube. 

\begin{definition}
  \label{def:ParameterizedSurf}
  A \emph{(parameterized) hypersurface}
  or a \emph{(spherical) cycle}
  is the image of a continuous map 
  $\mathcal{S}: \Omega\rightarrow\mathbb{R}^{m}$
  where $\Omega=\mathbb{B}^{m-1}$ or $\Omega=\partial \mathbb{B}^m$,
  respectively.
%
\end{definition}

A hypersurface or cycle is \emph{simple}
 if ${\mathcal S}$ is injective
 and it is \emph{regular} if 
 $\mathcal{S}\in {\mathcal C}^{1}(\Omega)$ 
 and 
 $\text{rank}(\dif\mathcal{S}(\mathbf{z}))=m-1$
 for all $\mathbf{z}\in\mathbb{B}^{m-1}$.

\begin{definition}
  A \emph{moving hypersurface} 
  is a homotopy class
  $\mathcal{S}: 
  \mathbb{B}^{m-1}\times [0,1]\rightarrow\mathbb{R}^{m}$
  of simple regular hypersurfaces,
  each of which is homeomorphic to ${\mathcal S}(0)$.
\end{definition}

We write $\mathcal{S}(t):=\{\mathcal{S}(\mathbf{z},t)\;
 |\;\mathbf{z}\in\mathbb{B}^{m-1}\}$
 for the point set of a moving hypersurface
 at a fixed time $t$. 
To emphasize the parametrization and points on the hypersurface, 
 we write
 $\mathcal{S}(\mathbf{z})=
 (p_{1}(\mathbf{z}),\ldots,p_{m}(\mathbf{z}))$
 or $\mathcal{S}(\mathbf{z}, t)=
 (p_{1}(\mathbf{z},t),\ldots,p_{m}(\mathbf{z},t))$.

By the word ``spherical,'' we recall 
 that a topological $m$-cycle may not be homeomorphic
 to $\partial \mathbb{B}^m$ or $\mathbb{S}^{m-1}$. 
In this work, however, 
 a cycle always refers to a spherical cycle;
 thus for simplicity we drop the word ``spherical.''
We also assume that
 all hypersurfaces be regular, 
 this assumption incurs no loss of generality for LFC
 because any hypersurfaces with discontinuous or degenerate derivatives 
 can be approximated to arbitrary accuracy 
 by a regular hypersurface.

\begin{definition}
  \label{def:OutwardNvecForSurf}
  The \emph{outward normal vector of a regular hypersurface} 
  \mbox{$\mathcal{S}: \mathbb{B}^{m-1}\rightarrow \mathbb{R}^m$}
  at $\mathcal{S}(\textbf{z})$
  is the unit vector $\mathbf{n}(\mathbf{z})$ satisfying
  \begin{equation}
    \label{eq:OutwardVecOfMovingSurf}
    \left\{
      \begin{array}{l}
        \left(\bigwedge_{i=1}^{m-1}
          \frac{\partial\mathcal{S}(\mathbf{z})}{\partial z_{i}}
        \right)
        \wedge\mathbf{n}(\mathbf{z})<0;
        \\
        \forall j=1,\ldots,m-1,\quad
        \pdfFrac{\mathcal{S}(\mathbf{z})}{z_{j}}
        \cdot\mathbf{n}(\mathbf{z}) = 0.
      \end{array}
    \right.
  \end{equation}
  where the parameter $\mathbf{z}:=(z_{1},\ldots,z_{m-1})$
  and $\wedge$ denotes the wedge product \cite{tu11:_introd_manif}.
\end{definition}

\begin{definition}
  \label{def:OutwardNormalVec}
  The \emph{outward normal vector of a cycle} 
  $\psi: \partial \mathbb{B}^m\rightarrow \mathbb{R}^m$
  at $\psi(\mathbf{z})$
  is the unit vector $\mathbf{n}(\mathbf{z})$ satisfying
  \begin{itemize}
  \item for $\mathbf{z}_{i}^{0} :=
    (z_{1},\ldots,z_{i-1},0,z_{i+1},\ldots,z_{m})$,
    \begin{equation}
      \label{eq:OutwardVecOfClosedSurf1}
      \left\{
        \begin{array}{l}
          \left(\bigwedge_{k=1}^{i-1}
          \pdfFrac{\psi(\mathbf{z}_{i}^{0})}{z_{k}}\right)
          \wedge\mathbf{n}(\mathbf{z}_{i}^{0})\wedge
          \left(\bigwedge_{k=i+1}^{m}
          \pdfFrac{\psi(\mathbf{z}_{i}^{0})}{z_{k}}\right)\le0,
          \\
          \forall k=1,\ldots,i-1,i+1,\ldots,m,\quad 
          \pdfFrac{\psi(\mathbf{z}_{i}^{0})}{z_{k}}
          \cdot\mathbf{n}(\mathbf{z}_{i}^{0})=0; 
        \end{array}
      \right.
    \end{equation}
  \item for $\mathbf{z}_{i}^{1}:=
    (z_{1},\ldots,z_{i-1},1,z_{i+1},\ldots,z_{m})$,
    \begin{equation}
      \label{eq:OutwardVecOfClosedSurf2}
      \left\{
        \begin{array}{l}
          \left(\bigwedge_{k=1}^{i-1}
          \pdfFrac{\psi(\mathbf{z}_{i}^{1})}{z_{k}}\right)
          \wedge\mathbf{n}(\mathbf{z}_{i}^{1})
          \wedge\left(\bigwedge_{k=i+1}^{m}
          \pdfFrac{\psi(\mathbf{z}_{i}^{1})}{z_{k}}\right)\ge0,
          \\
          \forall k=1,\ldots,i-1,i+1,\ldots,m, \quad 
          \pdfFrac{\psi(\mathbf{z}_{i}^{1})}{z_{k}}
          \cdot\mathbf{n}(\mathbf{z}_{i}^{1})=0.
        \end{array}
      \right.
    \end{equation}
  \end{itemize}
\end{definition}

Definitions \ref{def:OutwardNvecForSurf}
 and \ref{def:OutwardNormalVec}
 give intrinsic orientations
 since the direction of an outward normal vector 
 is determined by the parametrization. 

Instead of ``$< 0$'' and ``$> 0$'',
 we write ``$\le 0$'' and ``$\ge 0$''
 in (\ref{eq:OutwardVecOfClosedSurf1})
 and (\ref{eq:OutwardVecOfClosedSurf2})
 to indicate that the cycle $\psi(\partial \mathbb{B}^{m})$
 may contain \emph{singular point}s
 where $\text{rank}(\dif\psi|_{\mathbf{z}_{i}^{s}})=0$.
Fortunately, Sard's theorem implies that 
 the singular points on $\psi(\partial \mathbb{B}^{m})$ 
 form a set of measure zero
 and thus their presence does not affect integrals over cycles.
 

\subsection{The fluxing index and flux sets}
\label{sec:FluxingIndex}

In this subsection, we define the fluxing index precisely 
 and show that it is the topological degree of some function
 related to the flow map and the moving hypersurface.

\begin{definition}[Particle crossings through a hypersurface]
  \label{def:crossings}
  Suppose a Lagrangian particle $\mathbf{p}$ goes into 
  the moving hypersurface ${\mathcal S}(t)$
  at $t_{\times}:=t_0+\tau k$, 
  i.e.,
  \begin{displaymath}
    \mathbf{p}(t_{\times}):=\phi_{t_{0}}^{+\tau k}(\mathbf{p})
    =\mathcal{S}(\mathbf{\mathbf{z}}_{\mathbf{p}}, t_{\times}), 
  \end{displaymath}
  where $\mathbf{\mathbf{z}}_{\mathbf{p}}$
  is the parameter of $\mathbf{p}(t_{\times})$
  on ${\mathcal S}(t_{\times})$.
  According to the relative velocity
  \begin{equation}
    \label{eq:relativeCrossVel}
    \mathbf{v}_{\times}(\mathbf{p},\tau)
    := \mathbf{u}(\mathbf{p}(t_{\times}), t_{\times})
    - \partial_t{\mathcal S}(\mathbf{z}_{\mathbf{p}}, t_{\times}), 
  \end{equation}
  the intersection $\mathbf{p}(t_{\times})$
  is called a \emph{positive crossing},
  a \emph{negative crossing}, or an \emph{improper intersection}
  if $\mathbf{v}_{\times}(\mathbf{p},\tau) \cdot
  \mathbf{n}(\mathbf{\mathbf{z}}_{\mathbf{p}}, t_{\times})$
  is positive, negative, or zero, respectively.
\end{definition}
 
\begin{definition}
  \label{def:fluxingIndex}
  The \emph{fluxing index of Lagrangian particle} $\mathbf{p}$
  passively advected
  by the flow of a time-dependent velocity field $\mathbf{u}$
  through a moving hypersurface ${\mathcal S}(t)$
  within a time interval $(t_0, t_0+k)$
  is the integer
  $n_{\mathbf{p}}(\mathbf{u}, t_0, k, {\mathcal S}):=n_{+}-n_{-}$
  where $n_{+}$ and $n_{-}$
  are respectively the numbers of its positive crossings
  and its negative crossings through ${\mathcal S}(t)$
  within $(t_0, t_0+k)$.
\end{definition}

By Definition \ref{def:crossings},
 the above fluxing index can be expressed as
\begin{equation}
  \label{eq:ExpressionForFluxIndices}
  n_{\mathbf{p}}(\mathbf{u}, t_0, k, {\mathcal S})
  = \sum_{\tau\in T_{\times}}\mathrm{sign}
  [\mathbf{v}_{\times}(\mathbf{p},\tau)\cdot
  \mathbf{n}(\mathbf{z}_{\mathbf{p}},t_{0}+\tau k)],
\end{equation}
where $T_{\times}:=\left\{\tau\in(0,1)\ |\
\phi_{t_0}^{+\tau k}(\mathbf{p})\in\mathcal{S}(t_{0}+\tau k)\right\}$; 
see Figure \ref{fig:fluxingIndex} for an illustration.

\begin{figure}
  \centering
  \subfigure[$n_{\mathbf{p}}=-1$]{
    \includegraphics[width=0.3\linewidth]{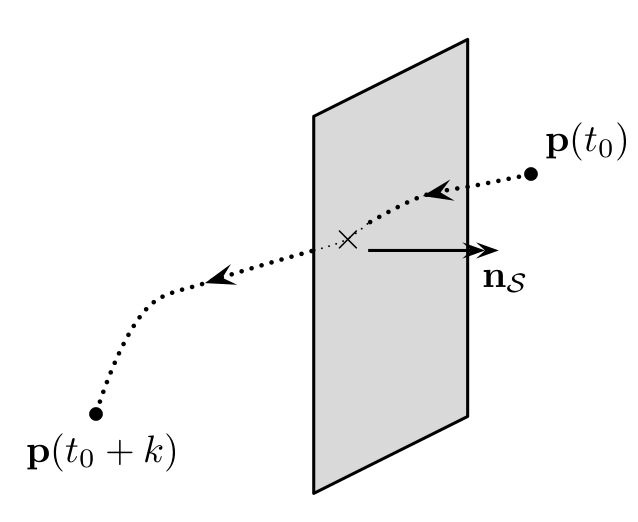}
  }
  \hfill
  \subfigure[$n_{\mathbf{p}}=0$]{
    \includegraphics[width=0.3\linewidth]{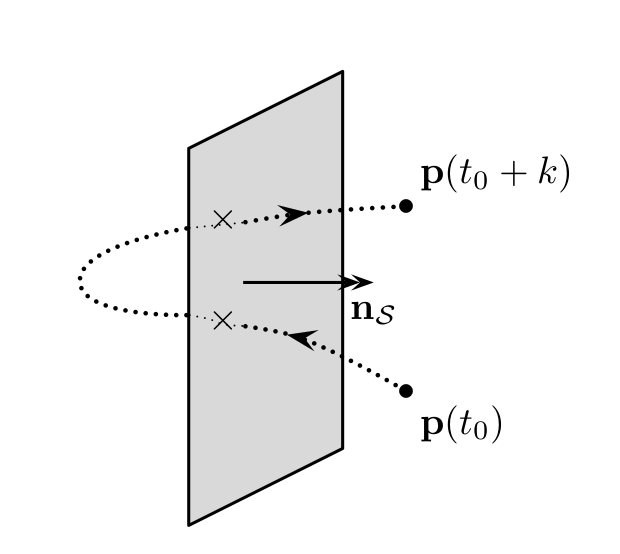}
  }
  \hfill
  \subfigure[$n_{\mathbf{p}}=+2$]{
    \includegraphics[width=0.3\linewidth]{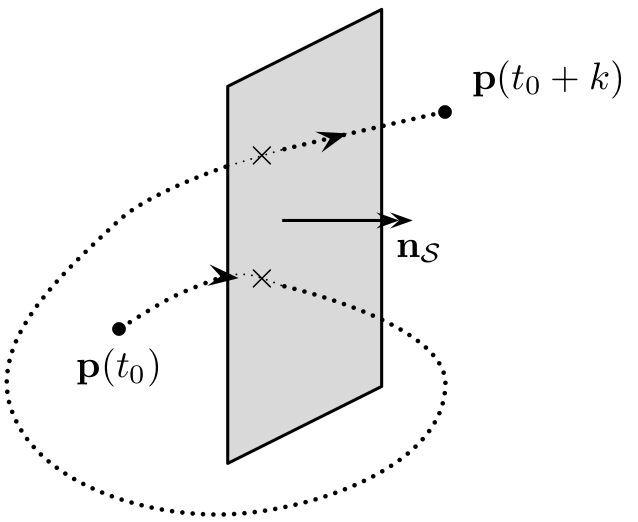}
  }
  \setcounter{subfigure}{0}
  \caption{Fluxing indices $n_{\mathbf{p}}$
    of a Lagrangian particle $\mathbf{p}$
    through a static surface $\mathcal{S}$. 
  A shaded region represents $\mathcal{S}$, 
  a dotted line the pathline $\Phi_{t_0}^{+k}(\mathbf{p})$, 
  and a marker ``$\times$'' a crossing point
  $\Phi_{t_0}^{+k}(\mathbf{p}) \cap \mathcal{S}$.}
  \label{fig:fluxingIndex}
\end{figure}


It is difficult to calculate the fluxing index
 by (\ref{eq:ExpressionForFluxIndices})
 or Definition \ref{def:fluxingIndex}.
Instead, 
 we link Definition \ref{def:fluxingIndex}
 to the flow map and the parametrization of the hypersurface.

\begin{theorem}
  \label{thm:FluxWithPsi}
  Suppose a Lagrangian particle $\mathbf{p}$ 
  crosses a moving hypersurface $\mathcal{S}(t)$
  at time $t_{\times}:=t_{0}+\tau k$.
  Then we have 
  \begin{equation}
      \label{eq:FluxSetWithDet}
      \sgn\left\{{\mathbf{v}_{\times}(\mathbf{p},\tau)}
      \cdot\mathbf{n}(\mathbf{z}_{\mathbf{p}},t_{\times})\right\}
      =\sgn\left\{\det\dif\phi_{t_0}^{+\tau k}(\mathbf{p})
        \det\dif\chi(\mathbf{z}_{\mathbf{p}},\tau)\right\}, 
  \end{equation}
  where $\mathbf{\mathbf{z}}_{\mathbf{p}}$
  is the parameter of the crossing point $\mathbf{p}(t_{\times})$
  on ${\mathcal S}(t_{\times})$,
  $\mathbf{n}(\mathbf{z}_{\mathbf{p}},t_{\times})$
  the unit outward normal vector of ${\mathcal S}(t_{\times})$
  at $\mathbf{p}(t_{\times})$,
  and
  the composite map $\chi:\mathbb{B}^m\rightarrow\mathbb{R}^{m}$ given by
  \begin{equation}
    \label{eq:mapchi}
    \chi(\mathbf{z},\tau) :=
    \phi_{t_{0}+\tau k}^{-\tau k}(\mathcal{S}(\mathbf{z},t_{0}+\tau k)). 
  \end{equation}
\end{theorem}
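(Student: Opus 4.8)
The plan is to unwind the definitions and express the quantity $\mathbf{v}_\times(\mathbf{p},\tau)\cdot\mathbf{n}(\mathbf{z}_\mathbf{p},t_\times)$ in terms of a single determinant, then recognize that determinant as the product of the two Jacobians on the right-hand side. First I would fix the crossing: at $t_\times = t_0 + \tau k$ we have $\mathbf{p}(t_\times) = \mathcal{S}(\mathbf{z}_\mathbf{p}, t_\times)$, so setting $\mathbf{x} := \mathbf{p}(t_0) = \phi_{t_0+\tau k}^{-\tau k}(\mathcal{S}(\mathbf{z}_\mathbf{p}, t_\times)) = \chi(\mathbf{z}_\mathbf{p}, \tau)$ identifies the initial position of the particle with a value of the composite map $\chi$ from (\ref{eq:mapchi}). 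The key observation is that the $m\times m$ Jacobian $\dif\chi(\mathbf{z}_\mathbf{p},\tau)$ has its first $m-1$ columns equal to $\partial_{z_j}\chi$ and its last column equal to $\partial_\tau\chi$, and by the chain rule each of these is $\dif\phi_{t_0+\tau k}^{-\tau k}$ applied to the corresponding partial derivative of $(\mathbf{z},\tau)\mapsto\mathcal{S}(\mathbf{z}, t_0+\tau k)$ (with the $\tau$-derivative also picking up the explicit $\tau$-dependence of the pullback map $\phi^{-\tau k}_{t_0+\tau k}$). Factoring $\dif\phi_{t_0+\tau k}^{-\tau k}(\mathcal{S}(\mathbf{z}_\mathbf{p},t_\times))$ out of every column gives
\begin{displaymath}
  \det\dif\chi(\mathbf{z}_\mathbf{p},\tau)
  = \det\dif\phi_{t_0+\tau k}^{-\tau k}(\mathbf{p}(t_\times))\cdot
  \det\bigl[\partial_{z_1}\mathcal{S},\ldots,\partial_{z_{m-1}}\mathcal{S},\,\mathbf{w}\bigr],
\end{displaymath}
where $\mathbf{w}$ is the $\tau$-derivative of the curve $\tau\mapsto\phi_{t_0+\tau k}^{-\tau k}(\mathbf{p}(t_\times))$ pushed back, which after differentiating the relation $\phi_{t_0}^{+\tau k}\circ\phi_{t_0+\tau k}^{-\tau k} = \mathrm{id}$ should reduce to $k$ times $\mathbf{v}_\times(\mathbf{p},\tau)$ transported by $\dif\phi$ (a short computation using $\phi_{t_0}^{+\tau k}(\mathbf{x}) = \mathbf{p}(t_\times)$ and $\partial_\tau\phi_{t_0}^{+\tau k}(\mathbf{x}) = k\,\mathbf{u}(\mathbf{p}(t_\times),t_\times)$).

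Second, I would compute $\det\dif\phi_{t_0}^{+\tau k}(\mathbf{p})$ similarly: since $\phi_{t_0+\tau k}^{-\tau k}$ is the inverse of $\phi_{t_0}^{+\tau k}$, their Jacobians at corresponding points are reciprocal, so $\sgn\det\dif\phi_{t_0}^{+\tau k}(\mathbf{p}) = \sgn\det\dif\phi_{t_0+\tau k}^{-\tau k}(\mathbf{p}(t_\times))$, and hence
\begin{displaymath}
  \sgn\bigl\{\det\dif\phi_{t_0}^{+\tau k}(\mathbf{p})\,\det\dif\chi(\mathbf{z}_\mathbf{p},\tau)\bigr\}
  = \sgn\det\bigl[\partial_{z_1}\mathcal{S},\ldots,\partial_{z_{m-1}}\mathcal{S},\,\mathbf{v}_\times(\mathbf{p},\tau)\bigr],
\end{displaymath}
where the $(\det\dif\phi)^2$-type factor is positive and drops out, and the factor $k>0$ also drops out. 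Finally, by Definition \ref{def:OutwardNvecForSurf} the outward normal $\mathbf{n}(\mathbf{z}_\mathbf{p},t_\times)$ is orthogonal to every $\partial_{z_j}\mathcal{S}$ and satisfies $\bigl(\bigwedge_j\partial_{z_j}\mathcal{S}\bigr)\wedge\mathbf{n}<0$; decomposing $\mathbf{v}_\times = \mathbf{v}_\times^\parallel + (\mathbf{v}_\times\cdot\mathbf{n})\mathbf{n}$ into tangential and normal parts, the tangential part contributes nothing to the determinant (it is a linear combination of the other columns), so
\begin{displaymath}
  \det\bigl[\partial_{z_1}\mathcal{S},\ldots,\partial_{z_{m-1}}\mathcal{S},\,\mathbf{v}_\times\bigr]
  = (\mathbf{v}_\times\cdot\mathbf{n})\,\det\bigl[\partial_{z_1}\mathcal{S},\ldots,\partial_{z_{m-1}}\mathcal{S},\,\mathbf{n}\bigr],
\end{displaymath}
and the wedge-product sign condition says the last determinant is negative; therefore $\sgn$ of the left side equals $-\sgn(\mathbf{v}_\times\cdot\mathbf{n})$. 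Tracking this extra minus sign carefully — reconciling the orientation convention in (\ref{eq:OutwardVecOfMovingSurf}) with the column ordering I chose in $\dif\chi$ — and making sure $\mathbf{n}$ is outward in the same sense used for the fluxing index, is where I expect the bookkeeping to be most delicate; the resolution is that $\dif\chi$ should be taken with columns $(\partial_{z_1},\ldots,\partial_{z_{m-1}},\partial_\tau)$ in exactly the order that makes the sign work out, or equivalently that the relation $\bigl(\bigwedge\partial_{z_j}\mathcal{S}\bigr)\wedge\mathbf{n}<0$ matches $\det[\partial_{z_1}\mathcal{S},\ldots,\partial_{z_{m-1}}\mathcal{S},\mathbf{n}]$ having the sign that cancels against the reciprocal-Jacobian factor.

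The main obstacle, then, is not any single estimate but the orientation/sign accounting: one must verify that all the squared-Jacobian and positive-$k$ factors genuinely cancel or drop, and that the one surviving sign — from the wedge-product convention defining $\mathbf{n}$ — is consistent on both sides of (\ref{eq:FluxSetWithDet}). Once the chain-rule factorization of $\dif\chi$ is set up with the right column order, the rest is linear algebra: reciprocity of Jacobians of inverse maps, multilinearity of the determinant, and orthogonality of $\mathbf{n}$ to the tangent frame.
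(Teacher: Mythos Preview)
Your approach is correct and is essentially the same as the paper's: factor the Jacobian of $\chi$ through the flow map, reduce to the determinant $\det[\partial_{z_1}\mathcal{S},\ldots,\partial_{z_{m-1}}\mathcal{S},\,\cdot\,]$, and finish with the orthogonal decomposition of $\mathbf{v}_\times$ and the wedge-product sign convention for $\mathbf{n}$. The only substantive difference is bookkeeping: rather than factoring out $\dif\phi_{t_0+\tau k}^{-\tau k}$ from $\dif\chi$ and then invoking reciprocity of inverse Jacobians, the paper left-multiplies $\dif\chi$ by $\dif\phi_{t_0}^{+\tau k}$ directly, using the identity $\phi_{t_0}^{+\tau k}\circ\chi(\mathbf{z},\tau)=\mathcal{S}(\mathbf{z},t_0+\tau k)$, which makes the column-by-column computation cleaner and resolves your sign worry at once: differentiating that identity in $\tau$ yields $\dif\phi_{t_0}^{+\tau k}(\mathbf{p})\,\partial_\tau\chi = -k\,\mathbf{v}_\times$ (not $+k\,\mathbf{v}_\times$), and this extra $-1$ is precisely what cancels the $-1$ coming from $\bigl(\bigwedge_j\partial_{z_j}\mathcal{S}\bigr)\wedge\mathbf{n}<0$.
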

\begin{proof}
  Since $\tau, t_{\times}, \mathbf{p}(t_{\times}), \mathbf{z}_{\mathbf{p}}$
  are all constants in the proof,
  we write $\mathbf{e}_{i} :=
  \partial_{z_{i}} \mathcal{S}(\mathbf{z}_{\mathbf{p}},t_{\times})$,
  $\mathbf{n}_{\times}:=\mathbf{n}(\mathbf{z}_{\mathbf{p}},t_{\times})$,
  and $\mathbf{v}_{\times}:= \mathbf{v}_{\times}(\mathbf{p},\tau)$. 
  Then $(\mathbf{e}_i)_{i=1}^{m-1}$ 
  is a basis of the tangent space of 
  $\mathcal{S}(t_{\times})$
  at $\mathbf{p}(t_{\times})$.
  By Definition \ref{def:OutwardNvecForSurf},
  the normal vector satisfies
  \begin{equation}
    \label{eq:NormalVec}
    \wedge_{i=1}^{m-1}\mathbf{e}_{i}
    \wedge \mathbf{n}_{\times}<0.
  \end{equation}

  Write $\mathbf{x}:=\mathbf{p}(t_{\times})
  =\mathcal{S}(\mathbf{z}_{\mathbf{p}},t_{\times})$
  and we have, from (\ref{eq:PhiExpression}) and (\ref{eq:mapchi}), 
  \begin{equation}
      \label{eq:PhiAndInvPhi}
      \phi_{t_{0}}^{+\tau k}(\phi_{t_{\times}}^{-\tau k}(\mathbf{x}))
      = \mathbf{x} = 
      \phi_{t_{0}}^{+\tau k}(\chi(\mathbf{z}_{\mathbf{p}},\tau)). 
  \end{equation}

  Differentiate the first equality in \eqref{eq:PhiAndInvPhi},
  apply the chain rule, and we have
  \begin{displaymath}
    \difFrac{\phi_{t_{0}}^{+\tau k}(\phi_{t_{\times}}^{-\tau k}
      (\mathbf{x}))}{\tau}=\difFrac{\mathbf{x}}{\tau}
    \ \Rightarrow\ 
    \pdfFrac{\phi_{t_{0}}^{+\tau k}(\mathbf{p})}{\tau}
    +\dif\phi_{t_{0}}^{+\tau k}(\mathbf{p})
    \partial_{\tau}\phi_{t_{\times}}^{-\tau k}(\mathbf{x})=k
    \partial_{t}\mathcal{S}(\mathbf{z}_{\mathbf{p}},t_{\times}), 
  \end{displaymath}
  which, together with
  $\chi(\mathbf{z}_{\mathbf{p}},\tau)=
  \phi_{t_{\times}}^{-\tau k}(\mathbf{x})$
  and (\ref{eq:relativeCrossVel}), gives
  \begin{equation}
    \label{eq:DiffByt}
    \dif\phi_{t_0}^{+\tau k}(\mathbf{p})
    \partial_{\tau}\chi(\mathbf{z}_{\mathbf{p}},\tau)
    =-k\mathbf{v}_{\times}. 
  \end{equation}

  Differentiate the second equality in \eqref{eq:PhiAndInvPhi}
  and we have
  \begin{equation}
    \label{eq:DiffByTangentDirec}
    \dif \phi_{t_0}^{+\tau k}(\mathbf{p})
    \partial_{z_{i}}\chi(\mathbf{z}_{\mathbf{p}},\tau)=
    \pdfFrac{\mathbf{x}}{z_{i}} = \mathbf{e}_{i}.
  \end{equation}

  \eqref{eq:DiffByt} and \eqref{eq:DiffByTangentDirec} combine to
      $\dif \phi_{t_{0}}^{+\tau k}(\mathbf{p})
      \dif \chi(\mathbf{z}_{\mathbf{p}},\tau)
      =[\mathbf{e}_{1},\ldots,\mathbf{e}_{m-1}, 
      -k\mathbf{v}_{\times}]$,
  where the LHS is a matrix with column vectors 
  $\mathbf{e}_{i}$'s and
  $-k\mathbf{v}_{\times}$.
  Then properties of determinants and wedge products yield
  \begin{displaymath}
    \det\dif\phi_{t_0}^{+\tau k}(\mathbf{p})
      \det\dif\chi(\mathbf{z}_{\mathbf{p}},\tau)
      =-k(\wedge_{i=1}^{m-1}
        \mathbf{e}_{i})
        \wedge \mathbf{v}_{\times}
      = -k[\mathbf{v}_{\times}
      \cdot\mathbf{n}_{\times}]
      (\wedge_{i=1}^{m-1}\mathbf{e}_{i})
         \wedge\mathbf{n}_{\times}
  \end{displaymath}
  and the proof is completed by (\ref{eq:NormalVec}). 
\end{proof}

The map $\chi$ in (\ref{eq:mapchi}) is not a homeomorphism
 because, although ${\mathcal S}$ is injective, 
 $\phi_{t_{0}+\tau k}^{-\tau k}$ is not:
 a particle may visit the same location
 at two different time instants.


\begin{lemma}[Jacobi's formula]
  \label{lem:JacobiFormula}
  For a $\mathcal{C}^1$ velocity $\mathbf{u}$ in (\ref{eq:velODE}), 
  the Jacobian determinant
  $J$ of the flow map
  $\phi_{t_0}: \mathbb{R}^{m}\times\mathbb{R}\rightarrow\mathbb{R}^{m}$
  with fixed initial time $t_0$ satisfies
  \begin{equation}
    \label{eq:JacobiEquation}
    \frac{\dif J(\mathbf{p}(t_0),t)}{\dif t} 
    = J(\mathbf{p}(t_0), t)\, 
    \nabla \cdot \mathbf{u}(\mathbf{p}(t), t), 
  \end{equation}
  where the divergence operator $\nabla \cdot$
  only operates on spatial coordinates. 
\end{lemma}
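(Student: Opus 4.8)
The plan is to reduce \eqref{eq:JacobiEquation} to the classical Liouville formula for linear systems via the \emph{variational equation} of \eqref{eq:velODE}. Write $\Phi(t):=\dif\phi_{t_0}^{t-t_0}(\mathbf{p}(t_0))$ for the $m\times m$ matrix of partial derivatives of the flow map with respect to the initial position $\mathbf{p}(t_0)$, so that $J(\mathbf{p}(t_0),t)=\det\Phi(t)$ and $\Phi(t_0)=I$. Since $\mathbf{u}$ is $\mathcal{C}^1$, the standard theorem on differentiable dependence of ODE solutions on initial data guarantees that $\phi_{t_0}^{t-t_0}$ is $\mathcal{C}^1$ in $\mathbf{p}(t_0)$ and that the mixed partials $\partial_t\partial_{\mathbf{p}(t_0)}$ and $\partial_{\mathbf{p}(t_0)}\partial_t$ agree. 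Writing \eqref{eq:velODE} along the trajectory as $\partial_t\mathbf{p}(t)=\mathbf{u}(\mathbf{p}(t),t)$, differentiating with respect to $\mathbf{p}(t_0)$, and applying the chain rule yields
\begin{displaymath}
  \dot\Phi(t) = A(t)\,\Phi(t),\qquad A(t):=D_{\mathbf{x}}\mathbf{u}(\mathbf{p}(t),t),\qquad \Phi(t_0)=I.
\end{displaymath}

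Next I would establish Jacobi's formula for the time derivative of a determinant: for any differentiable matrix-valued function $\Phi(t)$,
\begin{displaymath}
  \frac{\dif}{\dif t}\det\Phi(t) = \mathrm{tr}\bigl(\mathrm{adj}(\Phi(t))\,\dot\Phi(t)\bigr),
\end{displaymath}
which follows by expanding $\det\Phi$ multilinearly in its rows and differentiating term by term, using that $\partial(\det\Phi)/\partial\Phi_{ij}$ equals the $(i,j)$ cofactor. Substituting $\dot\Phi=A\Phi$ gives
\begin{displaymath}
  \frac{\dif}{\dif t}\det\Phi(t) = \mathrm{tr}\bigl(\mathrm{adj}(\Phi(t))\,A(t)\,\Phi(t)\bigr) = (\det\Phi(t))\,\mathrm{tr}\,A(t),
\end{displaymath}
where the second equality uses the cyclic property of the trace together with $\Phi\,\mathrm{adj}(\Phi)=(\det\Phi)I$. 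Note this computation never uses invertibility of $\Phi(t)$, which conveniently sidesteps proving that the flow is a diffeomorphism before this lemma. Finally $\mathrm{tr}\,A(t)=\mathrm{tr}\,D_{\mathbf{x}}\mathbf{u}(\mathbf{p}(t),t)=\nabla\cdot\mathbf{u}(\mathbf{p}(t),t)$ by the definition of divergence, and since $J(\mathbf{p}(t_0),t)=\det\Phi(t)$ this is exactly \eqref{eq:JacobiEquation}.

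The main obstacle is the first step: ensuring $\Phi(t)$ is well defined and genuinely solves the variational equation, i.e., that the flow map is $\mathcal{C}^1$ in the initial data and that $\partial_t$ commutes with $\partial_{\mathbf{p}(t_0)}$. Under the hypothesis $\mathbf{u}\in\mathcal{C}^1$ this is the classical smooth-dependence theorem, so in the write-up I would simply cite a standard ODE reference; the remaining algebra---Jacobi's formula for $\frac{\dif}{\dif t}\det$ and the trace identity---is routine. Should a fully self-contained derivation be preferred, an alternative is to use the semigroup property $\phi_{t_0}^{t+h-t_0}=\phi_{t}^{h}\circ\phi_{t_0}^{t-t_0}$ to get $\det\Phi(t+h)=\det\Phi(t)\,\det(I+h A(t)+o(h))=\det\Phi(t)\,(1+h\,\mathrm{tr}\,A(t)+o(h))$ and differentiate at $h=0$; but the adjugate route above is cleaner.
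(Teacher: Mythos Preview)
Your proof is correct and is the standard derivation of Jacobi's formula via the variational equation and the adjugate identity. The paper itself does not give a proof: it simply cites \cite[p.~8]{chorin1993mathematical}, and the argument you wrote is essentially what one finds there (or in any standard ODE text). So there is nothing to compare beyond noting that you have supplied the self-contained proof the paper chose to outsource.
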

\begin{proof}
  See \cite[p. 8]{chorin1993mathematical}.
\end{proof}

\begin{lemma}
  \label{lem:JacobiGreaterThanZero}
  The flow map
  of a $\mathcal{C}^1$ velocity $\mathbf{u}$ in (\ref{eq:velODE})
  preserves orientations, i.e., 
  \begin{equation}
    \label{eq:positiveDetJacobian}
    \forall\mathbf{p}(t_0)\in\mathbb{R}^{m},
    \forall k>0, \forall \tau\in(0,1),
    \quad
    J(\mathbf{p}(t_0),\tau)
    :=\det\dif \phi_{t_{0}}^{+\tau k}(\mathbf{p})> 0.
  \end{equation}
\end{lemma}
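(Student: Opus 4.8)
The plan is to use Jacobi's formula (Lemma~\ref{lem:JacobiFormula}) together with a continuity argument. Fix $\mathbf{p}(t_0)\in\mathbb{R}^m$ and $k>0$, and regard $J(\mathbf{p}(t_0),t)$ as a function of $t$ on $[t_0,t_0+k]$; its relation to the quantity in \eqref{eq:positiveDetJacobian} is $\det\dif\phi_{t_0}^{+\tau k}(\mathbf{p})=J(\mathbf{p}(t_0),t_0+\tau k)$, so it suffices to show $J(\mathbf{p}(t_0),t)>0$ for all $t\in[t_0,t_0+k]$. At $\tau=0$ the flow map is the identity, so $J(\mathbf{p}(t_0),t_0)=1>0$. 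By Lemma~\ref{lem:JacobiFormula}, $J(\mathbf{p}(t_0),\cdot)$ solves the linear scalar ODE $\frac{\dif}{\dif t}J = J\,\nabla\cdot\mathbf{u}(\mathbf{p}(t),t)$, whose coefficient $t\mapsto \nabla\cdot\mathbf{u}(\mathbf{p}(t),t)$ is continuous (as $\mathbf{u}\in\mathcal{C}^1$ and the pathline $t\mapsto\mathbf{p}(t)$ is continuous). Hence we may integrate explicitly to obtain
\begin{equation*}
  J(\mathbf{p}(t_0),t) = \exp\!\left(\int_{t_0}^{t}
  \nabla\cdot\mathbf{u}(\mathbf{p}(s),s)\,\dif s\right),
\end{equation*}
which is strictly positive for every $t$. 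Specializing to $t=t_0+\tau k$ with $\tau\in(0,1)$ gives \eqref{eq:positiveDetJacobian}.

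Alternatively, if one wishes to avoid solving the ODE in closed form, the same conclusion follows from a connectedness argument: the set $\{t\in[t_0,t_0+k]: J(\mathbf{p}(t_0),t)>0\}$ is open (by continuity of $J$ in $t$, itself a consequence of the integral representation \eqref{eq:PhiExpression} and smooth dependence on data), nonempty (it contains $t_0$), and closed in $[t_0,t_0+k]$, because at any limit point $t^*$ where $J$ vanishes, Jacobi's formula forces $J\equiv 0$ on a neighborhood, contradicting that $t^*$ is a limit of points where $J>0$; more directly, $J$ never vanishes since the flow map $\phi_{t_0}^{+\tau k}$ is a homeomorphism with $\mathcal{C}^1$ inverse, so $\dif\phi_{t_0}^{+\tau k}$ is invertible and $J\neq 0$ throughout, whence the sign is constant on the connected interval and equals its value $+1$ at $\tau=0$.

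I do not anticipate any serious obstacle here; the only points requiring a modicum of care are (i) justifying that $t\mapsto J(\mathbf{p}(t_0),t)$ is continuous and that Jacobi's formula applies along the whole interval — both of which are standard consequences of the $\mathcal{C}^1$ regularity of $\mathbf{u}$ and the smooth-dependence theory for ODEs already invoked implicitly in Section~\ref{sec:introduction} — and (ii) the bookkeeping identifying $\det\dif\phi_{t_0}^{+\tau k}(\mathbf{p})$ with $J(\mathbf{p}(t_0),t_0+\tau k)$, which is just the definition of $J$ with the substitution $t=t_0+\tau k$. The exponential representation is the cleanest route and makes the strict positivity transparent, so that is the version I would write up.
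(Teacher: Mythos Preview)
Your proposal is correct and follows the same essential strategy as the paper: both invoke Jacobi's formula (Lemma~\ref{lem:JacobiFormula}) to obtain the linear scalar ODE $\frac{\dif J}{\dif\tau}=J\cdot k\,\nabla\cdot\mathbf{u}$ with initial value $J(0)=1$. The paper concludes by contradiction via Cauchy--Lipschitz uniqueness (if $J(\tau_*)=0$ then $J\equiv 0$ near $\tau_*$, contradicting $J(0)=1$), whereas your explicit exponential integral is the more direct route, and your alternative connectedness/uniqueness argument is essentially the paper's own.
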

\begin{proof}
  By (\ref{eq:PhiExpression}), we have
  $\phi_{t_{0}}^{0}(\mathbf{p})=\mathbf{p}$ and 
  $\frac{\dif}{\dif \tau} \phi_{t_0}^{+\tau k}(\mathbf{p})
  = k\mathbf{u}\left(\mathbf{p}(t_{0}+\tau k), t_{0}+\tau k\right)$.
  Then Lemma \ref{lem:JacobiFormula} yields
  an ODE on $J(\tau):=J(\mathbf{p}(t_0),\tau)$, 
  \begin{equation}
    \label{eq:ODEonJ}
    \difFrac{J(\tau)}{\tau} = R(J, \tau)
    := J(\tau)\, k\, 
    \nabla \cdot \mathbf{u}(\mathbf{p}(t_0+\tau k), t_{0}+\tau k), 
  \end{equation}
  where the initial condition is $J(0)=1$.

  Now suppose there exists $\tau_{*}\in(0,1)$ such that 
  $J(\tau_{*})=0$.
  Since $\mathbf{u}\in {\mathcal C}^1$,
  $\nabla\cdot\mathbf{u}$ exists and is continuous; 
  thus $k\nabla\cdot\mathbf{u}$ is bounded on $[0,1]$.
  Therefore,
  $R(J, \tau)$ is Lipschitz continuous in $J$
  and continuous in $\tau$.
  By the Cauchy-Lipschitz theorem,
  there exists some $\epsilon>0$ such that 
  the ODE (\ref{eq:ODEonJ}) admits a unique solution $J(\tau)$
  on $[\tau_{*}-\epsilon, \tau_{*}+\epsilon]$, 
  which must be $J(\tau)\equiv 0$.
  But this contradicts the initial condition.
\end{proof}


The identity \eqref{eq:ExpressionForFluxIndices}, 
  Theorem \ref{thm:FluxWithPsi},
  Lemma \ref{lem:JacobiGreaterThanZero}, 
  and Definition \ref{def:topoDegree} yield

\begin{theorem}
  \label{thm:fluxingIndexAsMapDegree}
  The fluxing index of a Lagrangian particle 
  in Definition \ref{def:fluxingIndex} can be expressed as
  \begin{equation}
    \label{eq:fluxingIndexAsMapDegree}
    n_{\mathbf{p}}(\mathbf{u}, t_0, k, {\mathcal S})
    = \deg ( \chi, \mathbb{B}^m,\mathbf{p}).    
  \end{equation}
\end{theorem}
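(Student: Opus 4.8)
The plan is to combine the pointwise sign identity from Theorem~\ref{thm:FluxWithPsi}, the positivity of the flow-map Jacobian from Lemma~\ref{lem:JacobiGreaterThanZero}, and the definition of the topological degree for a regular value (equation~\eqref{eq:topoDegreeRegular}) to evaluate the right-hand side of~\eqref{eq:fluxingIndexAsMapDegree} directly against the sum~\eqref{eq:ExpressionForFluxIndices} defining $n_{\mathbf{p}}$. First I would observe that a crossing of the pathline $\Phi_{t_0}^{+k}(\mathbf{p})$ through ${\mathcal S}(t)$ at $t_\times = t_0 + \tau k$ is exactly a solution $(\mathbf{z}_{\mathbf{p}}, \tau) \in \mathbb{B}^m$ of the equation $\chi(\mathbf{z},\tau) = \mathbf{p}$: indeed $\chi(\mathbf{z},\tau) = \phi_{t_0+\tau k}^{-\tau k}({\mathcal S}(\mathbf{z}, t_0+\tau k)) = \mathbf{p}$ says precisely that the surface point ${\mathcal S}(\mathbf{z}, t_\times)$, pulled back along the flow to time $t_0$, lands on $\mathbf{p}(t_0)$, i.e.\ that $\phi_{t_0}^{+\tau k}(\mathbf{p}) \in {\mathcal S}(t_\times)$. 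So $\chi^{-1}(\mathbf{p})$ is in bijection with the crossing set $T_\times$ of Definition~\ref{def:fluxingIndex}, with the parameter $\mathbf{z}_{\mathbf{p}}$ attached to each $\tau$.

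Next I would address the sign bookkeeping. For each crossing $(\mathbf{z}_{\mathbf{p}},\tau) \in \chi^{-1}(\mathbf{p})$, equation~\eqref{eq:topoDegreeRegular} contributes $\sgn J_\chi(\mathbf{z}_{\mathbf{p}},\tau) = \sgn\det\dif\chi(\mathbf{z}_{\mathbf{p}},\tau)$. By Lemma~\ref{lem:JacobiGreaterThanZero}, $\det\dif\phi_{t_0}^{+\tau k}(\mathbf{p}) > 0$, so
\begin{displaymath}
  \sgn\det\dif\chi(\mathbf{z}_{\mathbf{p}},\tau)
  = \sgn\left\{\det\dif\phi_{t_0}^{+\tau k}(\mathbf{p})\,\det\dif\chi(\mathbf{z}_{\mathbf{p}},\tau)\right\}
  = \sgn\left\{\mathbf{v}_\times(\mathbf{p},\tau)\cdot\mathbf{n}(\mathbf{z}_{\mathbf{p}},t_\times)\right\},
\end{displaymath}
where the last equality is~\eqref{eq:FluxSetWithDet} from Theorem~\ref{thm:FluxWithPsi}. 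Summing over $\chi^{-1}(\mathbf{p})$ and comparing with~\eqref{eq:ExpressionForFluxIndices} gives $\deg(\chi,\mathbb{B}^m,\mathbf{p}) = n_{\mathbf{p}}(\mathbf{u},t_0,k,{\mathcal S})$, provided $\mathbf{p}$ is a regular value of $\chi$ and $\mathbf{p}\notin\chi(\partial\mathbb{B}^m)$.

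The main obstacle is justifying that the degree formula~\eqref{eq:topoDegreeRegular} may be applied, i.e.\ handling the exceptional cases hidden in Definition~\ref{def:topoDegree}. Three things need care: (i) $\mathbf{p} \notin \chi(\partial\mathbb{B}^m)$, which corresponds to excluding crossings at the endpoints $\tau\in\{0,1\}$ and at the boundary $\partial\mathbb{B}^{m-1}$ of the surface parametrization — this is a genericity assumption on the time interval and can be arranged, or $n_{\mathbf{p}}$ is defined via the open interval $(0,1)$ anyway; (ii) $\mathbf{p}$ being a regular value of $\chi$, so that~\eqref{eq:topoDegreeRegular} rather than~\eqref{eq:topoDegreeRegularFinal} applies — here one invokes Sard's theorem to say singular values of $\chi\in{\mathcal C}^1$ form a null set, and for a singular $\mathbf{p}$ both $n_{\mathbf{p}}$ (which counts only \emph{proper} crossings, cf.\ improper intersections in Definition~\ref{def:crossings}) and $\deg$ are insensitive to a perturbation to a nearby regular value, so one reduces to the regular case; (iii) transversality of each crossing — when $\mathbf{v}_\times\cdot\mathbf{n} \neq 0$, Theorem~\ref{thm:FluxWithPsi} guarantees $J_\chi \neq 0$ at that preimage, so regular values of $\chi$ correspond exactly to configurations with only proper crossings, matching the summation in~\eqref{eq:ExpressionForFluxIndices}. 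I would present the proof first for regular $\mathbf{p}$ with no boundary crossings, then remark that the general case follows by the homotopy/perturbation stability built into Definition~\ref{def:topoDegree} and the measure-zero nature of the excluded configurations.
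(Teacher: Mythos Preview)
Your proposal is correct and follows exactly the same route as the paper: the authors simply state that the identity~\eqref{eq:ExpressionForFluxIndices}, Theorem~\ref{thm:FluxWithPsi}, Lemma~\ref{lem:JacobiGreaterThanZero}, and Definition~\ref{def:topoDegree} together yield~\eqref{eq:fluxingIndexAsMapDegree}. Your write-up is a faithful unpacking of that one-line citation, and your attention to the regular-value and boundary caveats is more explicit than the paper's own treatment.
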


\begin{definition}
  \label{def:FluxSet}
  The \emph{flux set of index $n$}
  through a moving hypersurface $\mathcal{S}(t)$
  within a time interval $(t_0, t_0+k)$, 
  denoted $\mathcal{F}_{\mathcal{S}}^{n}(t_{0},k)$, 
  is the set of initial loci of all Lagrangian particles
  with fluxing index $n$.
\end{definition}

\subsection{The generating cycle}

In contrast to the implicit characterization of flux sets
in Theorem \ref{thm:fluxingIndexAsMapDegree}, 
the following is an explicit construction. 

\begin{definition}
  \label{def:generatingCycle}
  The \emph{generating cycle 
  of a moving hypersurface} $\mathcal{S}(t)$
  in the flow of a $\mathcal{C}^1$ velocity field 
  $\mathbf{u}(\mathbf{x},t)$ 
  over a time interval $(t_{0},t_{0}+k)$ 
  is 
  \begin{equation}
    \label{eq:FluxGeneratingHypersurface}
    \mathcal{G}_{\mathcal{D}}(t_{0},k) :=
    \mathcal{S}(t_{0}) \bigcup
    \phi_{t_0+k}^{-k}\left(\mathcal{S}(t_{0}+ k)\right)
    \bigcup \Psi_{\partial\mathcal{S}}(t_{0},k),
  \end{equation}
  where the \emph{streak hypersurface} seeded
  from $\mathcal{S}(\partial \mathbb{B}^{m-1},t_0)$
  is
  \begin{equation}
    \label{eq:streakHypersurface}
    \Psi_{\partial\mathcal{S}}(t_{0},k) :=
    \left\{\phi_{t_{0}+\tau k}^{-\tau k}(\mathbf{x})\;|\;
      \mathbf{x}=\mathcal{S}(\mathbf{z},t_{0}+\tau k),
      \mathbf{z}\in\partial \mathbb{B}^{m-1},\tau\in[0,1]\right\}.
  \end{equation}
\end{definition}

In two dimensions,
 $\Psi_{\partial\mathcal{S}}(t_{0},k)$
 and $\mathcal{G}_{\mathcal{D}}(t_{0},k)$
 are respectively the streaklines
 and the generating curve of a donating region;
 see \cite[Fig. 3.1 \& 4.1]{zhang2013donating}. 

\begin{lemma}
  \label{lem:generatingCycleAsImage}
  The generating cycle in Definition \ref{def:generatingCycle}
  is orientable and
  \begin{equation}
    \label{eq:generatingCycleAsImage}
    \mathcal{G}_{\mathcal{D}}(t_{0},k)=\chi(\partial \mathbb{B}^m).
  \end{equation}
  Furthermore, there exists 
  a parametrization and an orientation of
  $\mathcal{G}_{\mathcal{D}}(t_{0},k)$
  such that the normal vector of $\mathcal{S}(t_{0})$
  is determined by (\ref{eq:OutwardVecOfClosedSurf1})
  with $i=m$ (and is thus the same as
  that by Definition \ref{def:OutwardNvecForSurf}),
  that of $\phi_{t_0+k}^{-k}(\mathcal{S}(t_{0}+k))$
  by (\ref{eq:OutwardVecOfClosedSurf2}) with $i=m$,
  and that of $\Psi_{\partial\mathcal{S}}(t_{0},k)$
  by Definition \ref{def:OutwardNormalVec} 
  with $i<m$.
\end{lemma}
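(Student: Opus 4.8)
The plan is to treat $\chi$ in (\ref{eq:mapchi}) as a $\mathcal{C}^1$ map on the closed cube $\overline{\mathbb{B}^m}$ with coordinates $(\mathbf{z},\tau)=(z_1,\dots,z_{m-1},z_m)$, where $z_m:=\tau$, and to identify the image of each face of $\partial\mathbb{B}^m$ under $\chi$. First I would split $\partial\mathbb{B}^m$ into the two ``caps'' $\{z_m=0\}$ and $\{z_m=1\}$ together with the ``tube'' $\partial\mathbb{B}^{m-1}\times[0,1]$ formed by the $2(m-1)$ faces $\{z_j=0\},\{z_j=1\}$ with $j<m$. On $\{z_m=0\}$ we have $\chi(\mathbf{z},0)=\phi_{t_0}^{0}(\mathcal{S}(\mathbf{z},t_0))=\mathcal{S}(\mathbf{z},t_0)$, so the image is $\mathcal{S}(t_0)$; on $\{z_m=1\}$ we have $\chi(\mathbf{z},1)=\phi_{t_0+k}^{-k}(\mathcal{S}(\mathbf{z},t_0+k))$, so the image is $\phi_{t_0+k}^{-k}(\mathcal{S}(t_0+k))$; and on the tube $\chi(\mathbf{z},\tau)=\phi_{t_0+\tau k}^{-\tau k}(\mathcal{S}(\mathbf{z},t_0+\tau k))$ with $\mathbf{z}\in\partial\mathbb{B}^{m-1}$ and $\tau\in[0,1]$, so the image is exactly the streak hypersurface $\Psi_{\partial\mathcal{S}}(t_0,k)$ of (\ref{eq:streakHypersurface}). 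Taking the union of these three images and comparing with (\ref{eq:FluxGeneratingHypersurface}) gives (\ref{eq:generatingCycleAsImage}).

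Next I would establish orientability. Since $\mathbf{u}\in\mathcal{C}^1$, the flow map depends $\mathcal{C}^1$-smoothly on its spatial argument by the standard smooth-dependence theory for ODEs, and $\mathcal{S}$ is $\mathcal{C}^1$ in $(\mathbf{z},t)$; hence $\chi$ is $\mathcal{C}^1$ on $\overline{\mathbb{B}^m}$ and $\chi|_{\partial\mathbb{B}^m}$ is a $\mathcal{C}^1$ parametrization of $\mathcal{G}_{\mathcal{D}}(t_0,k)$ in the sense of Definition \ref{def:ParameterizedSurf}. The domain $\partial\mathbb{B}^m$ is an orientable closed $(m-1)$-manifold (homeomorphic to $\mathbb{S}^{m-1}$); I fix the orientation it inherits as the boundary of $\mathbb{B}^m$ equipped with the standard orientation $\dif z_1\wedge\cdots\wedge\dif z_{m-1}\wedge\dif\tau$. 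On the two caps $\chi$ restricts to the regular hypersurfaces $\mathcal{S}(t_0)$ and $\phi_{t_0+k}^{-k}(\mathcal{S}(t_0+k))$ (the latter a diffeomorphic image of a regular hypersurface), so there $\chi|_{\partial\mathbb{B}^m}$ is a local immersion; on the tube it may have degenerate points where the rank of $\dif\chi$ drops, but, as observed after Definition \ref{def:OutwardNormalVec}, these form a set of measure zero by Sard's theorem. Off this negligible set the push-forward of the fixed orientation of $\partial\mathbb{B}^m$, together with the ambient orientation of $\mathbb{R}^m$, selects a continuous unit normal field, which is precisely the outward normal of Definitions \ref{def:OutwardNvecForSurf} and \ref{def:OutwardNormalVec}; thus $\mathcal{G}_{\mathcal{D}}(t_0,k)$ is orientable.

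Finally I would match the conventions face by face. With the orientation of $\mathbb{B}^m$ fixed as above, the cap $\{z_m=0\}$ is the face $\mathbf{z}_m^0$ of Definition \ref{def:OutwardNormalVec}, so its outward normal obeys (\ref{eq:OutwardVecOfClosedSurf1}) with $i=m$; since $\partial_{z_k}\chi(\mathbf{z},0)=\partial_{z_k}\mathcal{S}(\mathbf{z},t_0)$ for $k=1,\dots,m-1$, this condition reads $\bigl(\bigwedge_{k=1}^{m-1}\partial_{z_k}\mathcal{S}(\mathbf{z},t_0)\bigr)\wedge\mathbf{n}\le 0$, which, $\mathcal{S}(t_0)$ being regular so that the wedge never vanishes, coincides with Definition \ref{def:OutwardNvecForSurf} for $\mathcal{S}(t_0)$. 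The cap $\{z_m=1\}$ is the face $\mathbf{z}_m^1$, so by definition its normal obeys (\ref{eq:OutwardVecOfClosedSurf2}) with $i=m$ on $\phi_{t_0+k}^{-k}(\mathcal{S}(t_0+k))$; and each side face $\{z_j=0\}$ or $\{z_j=1\}$ with $j<m$ obeys (\ref{eq:OutwardVecOfClosedSurf1}) or (\ref{eq:OutwardVecOfClosedSurf2}) with $i=j<m$, i.e.\ Definition \ref{def:OutwardNormalVec} with $i<m$ on $\Psi_{\partial\mathcal{S}}(t_0,k)$. The residual freedom in the proof --- the choice between the two orientations of $\mathbb{B}^m$, equivalently an orientation-reversing reparametrization of $\mathbb{B}^{m-1}$ --- is spent to ensure that the cap $\{z_m=0\}$ delivers the \emph{a priori} outward normal of $\mathcal{S}(t_0)$ rather than its opposite; this pins down the parametrization and orientation claimed. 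The main thing to be careful about here is not a deep obstacle but the bookkeeping of orientations: checking that a \emph{single} global orientation of $\partial\mathbb{B}^m$ produces all of the stated conventions simultaneously across the $2m$ faces and along their shared edges, and that the measure-zero degenerate locus of $\chi$ on the tube is genuinely harmless, so that calling $\mathcal{G}_{\mathcal{D}}$ orientable is legitimate even though $\chi$ may be far from injective.
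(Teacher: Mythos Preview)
Your proposal is correct and follows essentially the same approach as the paper: decompose $\partial\mathbb{B}^m$ into the two caps $\{z_m=0\},\{z_m=1\}$ and the tube $\partial\mathbb{B}^{m-1}\times[0,1]$, identify the $\chi$-image of each piece with the three constituents in (\ref{eq:FluxGeneratingHypersurface}), and then pull the orientation of $\partial\mathbb{B}^m$ (as boundary of the standardly oriented cube with $\tau=z_m$) through $\chi$ to read off Definitions \ref{def:OutwardNvecForSurf} and \ref{def:OutwardNormalVec} face by face. Your treatment is in fact more careful than the paper's brief ``orientability follows from that of $\mathbb{B}^m$ and $\chi$ being continuous,'' since you invoke $\mathcal{C}^1$ dependence of the flow and acknowledge the possible measure-zero degenerate locus on the tube, but the underlying argument is the same.
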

\begin{proof}
  (\ref{eq:streakHypersurface}) and (\ref{eq:mapchi}) yield
  $\Psi_{\partial\mathcal{S}}(t_{0},k)
  =\chi\left(\partial \mathbb{B}^m\setminus\partial \mathbb{B}^{m-1}\times\{0,1\}\right)$. 
  Then (\ref{eq:generatingCycleAsImage})
  follows from (\ref{eq:FluxGeneratingHypersurface}) and
  \begin{displaymath}
   \mathcal{S}(t_{0})\bigcup
  \phi_{t_0+k}^{-k}\left(\mathcal{S}(t_{0}+ k)\right) = 
  \left\{\phi_{t_{0}+\tau k}^{-\tau k}(\mathbf{x})\ |\ 
    \mathbf{x}\in \mathcal{S}(t_{0}+\tau k),\tau\in\{0,1\}\right\}.
  \end{displaymath}
  The above arguments imply that $\chi$ is a parametrization of
  $\mathcal{G}_{\mathcal{D}}(t_{0},k)$.

  The orientability of $\mathcal{G}_{\mathcal{D}}(t_{0},k)$
  follows from that of $\mathbb{B}^m$
  and $\chi$ being continuous.
  Then the proof is completed by orienting $\mathbb{B}^m$
  according to Definition \ref{def:OutwardNormalVec}, 
  selecting $\tau$
  as the $m$th coordinate of $\mathbb{B}^m$, 
  and choosing $\mathcal{S}(t_{0})$
  as the image of the lower $(m-1)$-face of $\mathbb{B}^m$
  normal to the $\tau$ axis.
\end{proof}
  
The above concepts are exemplified
in Figure \ref{fig:ExampleGeneratingSurface} where
$t_0=0$, $k=1$, and 
 \begin{displaymath}
   \begin{array}{rl}
     \chi(z_{1},z_{2},\tau) &= \phi_{\tau}^{-\tau}(z_{1}-\tau,z_{2},0)
                           =(z_{1}-\tau,z_{2},\tau),
     \\
     \Psi_{\partial {\mathcal S}} &= \cup_{i=1}^{4}\mathcal{Q}_{i},
     \\
     \mathcal{G}_{\mathcal{D}}(0,1) &=\chi(\partial \mathbb{B}^{3})
      = \mathcal{S}(0)\cup \phi_1^{-1}(\mathcal{S}(1))
      \cup \Psi_{\partial {\mathcal S}}, 
   \end{array}
 \end{displaymath}
 with the constituting parallelograms as 
  \begin{equation}
    \label{eq:partialDSFourCurves}
    \begin{array}{rl}
      \mathcal{Q}_1&=\chi(\{0\}\times[0,1]\times[0,1])
      =\left\{(-t,z_{2},t)\;|\;z_{2},t\in [0,1]\right\},\\
      \mathcal{Q}_{2}&=\chi(\{1\}\times[0,1]\times[0,1])
      =\left\{(1-t,z_{2},t)\;|\;z_{2},t\in[0,1]\right\},\\
      \mathcal{Q}_{3}&=\chi([0,1]\times\{0\}\times[0,1])
      =\left\{(z_{1}-t,0,t)\;|\;z_{1},t\in[0,1]\right\},\\
      \mathcal{Q}_{4}&=\chi([0,1]\times\{1\}\times[0,1])
      =\left\{(z_{1}-t,1,t)\;|\;z_{1},t\in[0,1]\right\};\\
      \mathcal{S}(0)&=\chi([0,1]\times[0,1]\times\{0\})
      =\left\{(z_{1},z_{2},0)\;|\;z_{1},z_{2}\in(0,1)\right\},\\
      \phi_1^{-1}(\mathcal{S}(1))&=\chi([0,1]\times[0,1]\times\{1\})
      =\left\{(z_{1}-1,z_{2},1)\;|\;z_{1},z_{2}\in(0,1)\right\}.
    \end{array}
  \end{equation}

\begin{figure}
  \centering
  \includegraphics[width=0.5\textwidth]{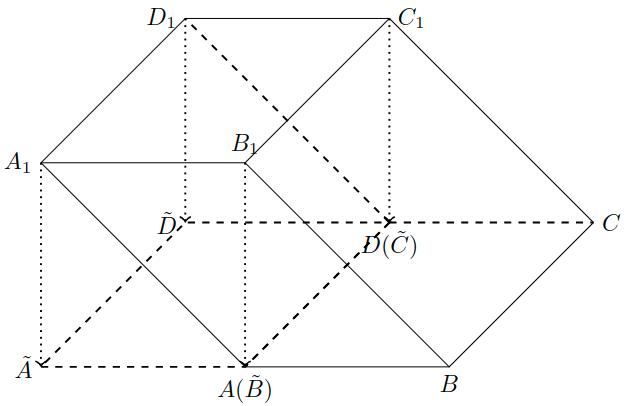}
  \caption{The generating cycle $\mathcal{G}_{\mathcal{D}}$
    of a moving square
    $\mathcal{S}(z_{1},z_{2},t)=(z_{1}-t,z_{2},0)$
    in the flow of $\mathbf{u}(x,y,z,t)\equiv(0,0,-1)$ 
    over the time interval $(0,1)$. 
    The squares $ABCD$, 
    $\tilde{A}\tilde{B}\tilde{C}\tilde{D}$, 
    and $A_1B_1C_1D_1$ 
    represent $\mathcal{S}(0)$, $\mathcal{S}(1)$,
    and $\phi_1^{-1}(\mathcal{S}(1))$,
    respectively.
    The boundary of the parallelepiped 
    $ABCD-A_1B_1C_1D_1$ constitutes $\mathcal{G}_{\mathcal{D}}$. 
    The four dotted lines are the pathlines of 
    $A_1, B_1, C_1$ and $D_1$. 
  }
  \label{fig:ExampleGeneratingSurface}
\end{figure}

  Note that we have parametrized $\mathcal{G}_{\mathcal{D}}(0,1)$
  such that $\mathcal{S}(0)$ and $\phi_1^{-1}(\mathcal{S}(1))$
  are images of $\mathbb{B}^2$ for the low side $\tau=0$
  and the high side $\tau=1$, respectively. 
  By Definition 
  \ref{def:OutwardNormalVec}, 
  the outward normal vectors of $\mathcal{G}_{\mathcal{D}}$ are:
  \begin{equation}
    \label{eq:ExampleOutwardNormalVec}
    \begin{array}{c}
      \mathbf{n}_{\mathcal{Q}_1}
      =\frac{\sqrt{2}}{2}\left(-1,0,1\right),\, 
      \mathbf{n}_{\mathcal{Q}_2}
      =\frac{\sqrt{2}}{2}\left(1,0,1\right),\,
      \mathbf{n}_{\mathcal{Q}_3}=\left(0,-1,0\right),\, 
      \mathbf{n}_{\mathcal{Q}_4}=\left(0,1,0\right); 
      \\
      \mathbf{n}_{\mathcal{S}(0)}=(0,0,-1),\ 
      \mathbf{n}_{\phi_1^{-1}(\mathcal{S}(1))} =\left(0,0,1\right).
    \end{array}
  \end{equation}

\subsection{The divergence theorem
  and Reynolds transport theorem for cycles}
\label{sec:diverg-Reynolds-theorems}

In this subsection, we customize the divergence theorem
 and Reynolds transport theorem for generating cycles
 that has the special structure
 of a topological sphere $\partial\mathbb{B}^m$.
Compared to the classical versions of these theorems, 
 our versions are less general 
 in that the proofs depend on the simple structure of $m$-cubes, 
 but are more general
 in that they are also valid on self-intersecting hypersurfaces. 

\begin{theorem}
  \label{thm:div}
  Define a cycle $\mathcal{S}:=\varphi(\partial \mathbb{B}^m)$
  with $\varphi: \overline{\mathbb{B}^m}\rightarrow \mathbb{R}^m$
  being a ${\mathcal C}^1$ map that needs not be injective.
  For a ${\mathcal C}^1$ vector field
  $\mathbf{F}:\mathbb{R}^m \rightarrow \mathbb{R}^m$, 
  we have
  \begin{equation}
    \label{eq:divThm}
    \oint_{ \mathcal{S} } 
    \mathbf{F}(\mathbf{p}) \cdot
    \mathbf{n}_{\mathcal{S}}(\mathbf{z}_{\mathbf{p}})
    \dif\mathbf{p}
    = \int_{\mathbb{B}^m}
    \nabla\cdot\mathbf{F}(\varphi (\mathbf{x})) J_\varphi
    \dif \mathbf{x}, 
  \end{equation}
  where $J_{\varphi}$ is the determinant
  of the Jacobian matrix $\dif \varphi$ 
  and the outward normal $\mathbf{n}_{\mathcal{S}}$
  of $\mathcal{S}$  
  is given by Definition \ref{def:OutwardNormalVec}. 
\end{theorem}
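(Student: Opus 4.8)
The plan is to reduce the statement to the classical divergence theorem on the cube $\overline{\mathbb{B}^m}$ by a change of variables, being careful that $\varphi$ need not be injective so that $\mathcal{S}=\varphi(\partial\mathbb{B}^m)$ may self-intersect and the left-hand side must be read as the sum of oriented surface integrals over the $2m$ faces of $\partial\mathbb{B}^m$ rather than as an integral over a point set. First I would apply the classical divergence theorem to the vector field $\mathbf{G}:=\mathbf{F}\circ\varphi$ pushed forward appropriately on $\overline{\mathbb{B}^m}$: since $\varphi\in{\mathcal C}^1$ and $\mathbf{F}\in{\mathcal C}^1$, the composition is ${\mathcal C}^1$, and the right-hand side $\int_{\mathbb{B}^m}(\nabla\cdot\mathbf{F})(\varphi(\mathbf{x}))\,J_\varphi\,\dif\mathbf{x}$ is exactly $\int_{\mathbb{B}^m}\nabla_{\mathbf{x}}\cdot\bigl(\operatorname{cof}(\dif\varphi)^{T}\,(\mathbf{F}\circ\varphi)\bigr)\dif\mathbf{x}$ by the Piola identity (the columns of the cofactor matrix are divergence-free, so the extra terms vanish). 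This is the one genuinely ${\mathcal C}^1$-versus-${\mathcal C}^2$ subtlety: the Piola identity $\nabla\cdot(\operatorname{cof}\dif\varphi)=0$ is classically stated for ${\mathcal C}^2$ maps, so I would either invoke the standard mollification argument (approximate $\varphi$ by ${\mathcal C}^2$ maps in ${\mathcal C}^1$, both sides of \eqref{eq:divThm} being continuous in that topology) or cite the known distributional version valid for ${\mathcal C}^1$ (indeed $W^{1,m}$) maps.

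Next I would apply the ordinary divergence theorem on the cube to the ${\mathcal C}^1$ vector field $\operatorname{cof}(\dif\varphi)^{T}(\mathbf{F}\circ\varphi)$, turning the volume integral into a sum of flux integrals over the $2m$ coordinate faces $\{z_i=0\}$ and $\{z_i=1\}$ of $\partial\mathbb{B}^m$, each with the outward Euclidean normal $\pm\mathbf{e}_i$. On the face $\mathbf{z}_i^{s}$, the integrand $\operatorname{cof}(\dif\varphi)^{T}(\mathbf{F}\circ\varphi)\cdot(\pm\mathbf{e}_i)$ is, by the cofactor expansion, exactly $\pm\det\bigl[\partial_{z_1}\varphi,\ldots,\widehat{\partial_{z_i}\varphi},\ldots,\partial_{z_m}\varphi \mid \mathbf{F}\circ\varphi\bigr]$ with the $i$th column omitted — i.e. $(\mathbf{F}\circ\varphi)$ paired against the wedge of the $m-1$ tangent vectors to that face. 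Comparing this determinant to the defining relations \eqref{eq:OutwardVecOfClosedSurf1} and \eqref{eq:OutwardVecOfClosedSurf2} for the outward normal $\mathbf{n}_{\mathcal{S}}$: those relations say precisely that $\bigl(\bigwedge_{k\ne i}\partial_{z_k}\varphi\bigr)\wedge\mathbf{n}_{\mathcal{S}}$ has sign $-(-1)^{?}$ matching the face, so that $\det\bigl[\partial_{z_1}\varphi,\ldots,\widehat{\cdot},\ldots \mid \mathbf{v}\bigr]$ equals $(\mathbf{v}\cdot\mathbf{n}_{\mathcal{S}})$ times the $(m-1)$-dimensional surface area element $\bigl\|\bigwedge_{k\ne i}\partial_{z_k}\varphi\bigr\|$, with the sign convention in Definition \ref{def:OutwardNormalVec} chosen exactly so the two signs ($\pm$ from $\pm\mathbf{e}_i$ and the $\le/\ge$ from the face index) cancel. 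Hence each face contributes $\int (\mathbf{F}\cdot\mathbf{n}_{\mathcal{S}})\,\dif(\text{surface area})$, and summing over the $2m$ faces gives $\oint_{\mathcal{S}}\mathbf{F}\cdot\mathbf{n}_{\mathcal{S}}\,\dif\mathbf{p}$ in the sense of the oriented surface integral over the parametrized cycle.

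The main obstacle is the bookkeeping of signs in the face-by-face identification — reconciling the outward Euclidean normal of the cube on each of its $2m$ faces with the intrinsic outward normal of the \emph{image} cycle as fixed by \eqref{eq:OutwardVecOfClosedSurf1}–\eqref{eq:OutwardVecOfClosedSurf2}, including the alternating $(-1)^{i}$ factor that appears when one omits the $i$th column of a determinant versus the convention of inserting $\mathbf{n}$ in the $i$th slot of the wedge. I would handle this by checking it once in the model case $\varphi=\mathrm{id}$, $m$ arbitrary (where both sides are the classical divergence theorem on the cube and the normals are literally $\pm\mathbf{e}_i$), observing the sign conventions in Definition \ref{def:OutwardNormalVec} were manifestly calibrated to make that case work, and then noting that the general case follows since all the relevant quantities transform multilinearly under $\dif\varphi$. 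A secondary, minor point is that the singular points where $\operatorname{rank}(\dif\varphi)<m-1$ on $\partial\mathbb{B}^m$ form a measure-zero set by Sard's theorem (as already noted after Definition \ref{def:OutwardNormalVec}), so they contribute nothing to either side and can be ignored throughout.
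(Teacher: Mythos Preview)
Your proposal is correct and follows essentially the same route as the paper: rewrite $(\nabla\cdot\mathbf{F})\circ\varphi\cdot J_\varphi$ as the $\mathbf{x}$-divergence of $\operatorname{cof}(\dif\varphi)^{T}(\mathbf{F}\circ\varphi)$ via the Piola identity, apply the classical divergence theorem on the cube, and then match the face-by-face boundary terms to the oriented surface integral using the cofactor expansion and Definition~\ref{def:OutwardNormalVec} --- the paper simply carries out each of these steps by explicit index computation rather than invoking the cofactor/Piola formalism by name. Your observation about the ${\mathcal C}^1$-versus-${\mathcal C}^2$ subtlety is in fact sharper than the paper's own treatment, which computes $\sum_i(-1)^{i+j}\partial_{x_i}K_{i,j}=0$ directly using second partials of $\varphi$ and thus tacitly assumes ${\mathcal C}^2$; your mollification argument closes that gap.
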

\begin{remark}
  If $\varphi$ is injective,
  the formula of integration by substitution yields 
  $\int_{\mathbb{B}^m}\nabla\cdot\mathbf{F}(\varphi(\mathbf{x}))
  J_{\varphi}\dif\mathbf{x}
  =\int_{\varphi(\mathbb{B}^m)}\nabla\cdot
  \mathbf{F}(\mathbf{p})\dif\mathbf{p}$, 
  then \eqref{eq:divThm} reduces
  to the classical divergence theorem on $\varphi(\mathbb{B}^m)$. 
  If $\varphi$ is not injective,
  then $\mathcal{S}$ could be self-intersecting
  and one cannot deduce (\ref{eq:divThm})
  from the divergence theorem. 
\end{remark}
\begin{proof}
  Write $\mathbf{p}:=\varphi(\mathbf{x})$
  and we prove (\ref{eq:divThm}) in four steps. 

  First, we show that
  $\nabla\cdot\mathbf{F}(\varphi (\mathbf{x})) J_\varphi$
  is a sum of determinants. 
  Let $F_j$ be the $j$th component of $\mathbf{F}$. 
  For a fixed $j$, the chain rule gives 
  $\sum_{k=1}^{m}
  \frac{\partial \varphi_k}{\partial x_j}\frac{\partial F_j}{\partial p_k}
  = \frac{\partial F_{j}}{\partial x_j}$; 
  these $m$ equations form a linear system, 
  for which Cramer's rule implies
  \begin{equation}
    \label{eq:DetM}
    J_{\varphi}\frac{\partial F_j}{\partial p_j}
    = \det
      \begin{bmatrix}
      \pdfFrac{\varphi_{1}}{x_{1}}&\cdots&
      \pdfFrac{\varphi_{j-1}}{x_{1}}&\pdfFrac{F_{j}}{x_{1}}&
      \pdfFrac{\varphi_{j+1}}{x_{1}}&\cdots&
      \pdfFrac{\varphi_{m}}{x_{1}}\\
      \vdots&\ddots&\vdots&\vdots&\vdots&\ddots&\vdots\\
      \pdfFrac{\varphi_{1}}{x_{m}}&\cdots&
      \pdfFrac{\varphi_{j-1}}{x_{m}}&\pdfFrac{F_{j}}{x_{m}}&
      \pdfFrac{\varphi_{j+1}}{x_{m}}&\cdots&
      \pdfFrac{\varphi_{m}}{x_{m}}
    \end{bmatrix}
    =: \det(M_{j}).
  \end{equation}

  Second, we show that the RHS of (\ref{eq:divThm}) 
  equals the integral of some divergence, 
  \begin{equation}
    \label{eq:divThmStep2}
    I_B := \int_{\mathbb{B}^m}
    \nabla\cdot\mathbf{F}(\varphi (\mathbf{x})) J_\varphi
    \dif \mathbf{x}
    = \int_{\mathbb{B}^m} \nabla\cdot \mathbf{g}(\mathbf{x})\, \dif\mathbf{x},
  \end{equation}
  where the $i$th component of $\mathbf{g}$ is defined as 
  \begin{equation}
    \label{eq:vecG}
    \begin{array}{l}
    g_i := \sum_{j=1}^m (-1)^{i+j}K_{i,j} 
    F_{j}(\varphi(\mathbf{\mathbf{x}}))
    \end{array}
  \end{equation}
  and $K_{i,j}$ denotes the $(i,j)$ cofactor of $\dif\varphi$.
  To prove that (\ref{eq:divThmStep2}) holds, 
  it suffices to show
  $\sum_{i=1}^m \frac{\partial g_i}{\partial x_i}
  = \sum_{j=1}^m \det(M_j)$,
  which follows from
  \begin{displaymath}
    \renewcommand{\arraystretch}{1.3}
    \begin{array}{rl}
      \sum_{j=1}^m\det (M_{j})
      &=\sum_{j=1}^m\sum_{i=1}^{m}(-1)^{i+j}
       \frac{\partial F_j}{\partial x_i}K_{i,j}
      \\
      &=\sum_{j=1}^m\sum_{i=1}^{m}(-1)^{i+j}
        \left[\frac{\partial}{\partial x_{i}}\left(F_{j}K_{i,j}\right)
        -F_j\frac{\partial K_{i,j}}{\partial x_i}\right]
      \\
      &=\sum_{j=1}^m\sum_{i=1}^{m} \frac{\partial}{\partial x_{i}}
        \left[(-1)^{i+j}F_{j}K_{i,j}\right]
        -(-1)^{i+j}F_{j}\frac{\partial K_{i,j}}{\partial x_i}
      \\
      &=\sum_{j=1}^m\sum_{i=1}^{m} \frac{\partial}{\partial x_{i}}
      \left[(-1)^{i+j}F_{j}K_{i,j}\right]
      \\
      &=\sum_{i=1}^m\frac{\partial g_i}{\partial x_i}, 
    \end{array}
  \end{displaymath}
  where the first step follows from the Laplace formula
  applied to (\ref{eq:DetM}),
  the last from (\ref{eq:vecG}), 
  and the penultimate from
\begin{displaymath}
  \begin{array}{rl}
    &\sum_{i=1}^{m}(-1)^{i+j}\frac{\partial K_{i,j}}{\partial x_i}
    = \det\begin{bmatrix}
      \pdfFrac{\varphi_{1}}{x_{1}}&\cdots&
      \pdfFrac{\varphi_{j-1}}{x_{1}}&\pdfFrac{}{x_{1}}&
      \pdfFrac{\varphi_{j+1}}{x_{1}}&\cdots&
      \pdfFrac{\varphi_{m}}{x_{1}}\\
      \vdots&\ddots&\vdots&\vdots&\vdots&\ddots&\vdots\\
      \pdfFrac{\varphi_{1}}{x_{m}}&\cdots&\pdfFrac{\varphi_{j-1}}{x_{m}}&
      \pdfFrac{}{x_{m}}&\pdfFrac{\varphi_{j+1}}{x_{m}}&\cdots&
      \pdfFrac{\varphi_{m}}{x_{m}}
    \end{bmatrix}
    \\
    =&\sum_{(i_1,\ldots,i_m)\in S_{m}}e_{i_1,\ldots,i_m}
    \frac{\partial}{\partial x_{i_{j}}}
    \left(\frac{\partial \varphi_1}{\partial x_{i_{1}}}
      \ldots\pdfFrac{\varphi_{j-1}}{x_{i_{j-1}}}
      \pdfFrac{\varphi_{j+1}}{x_{i_{j+1}}}\ldots\frac{\partial
        \varphi_m}{\partial x_{i_{m}}}\right)\\
    =&\sum_{(i_1,\ldots,i_m)\in S_{m}}
    e_{i_1,\ldots,i_m}\sum_{k=1,k\neq j}^{m}
    \frac{\partial^{2} \varphi_{k}}{\partial x_{i_{k}}\partial x_{i_{j}}}
    \left(\frac{\partial \varphi_1}{\partial x_{i_{1}}}\ldots
      \frac{\partial\hat{\varphi}_j}{\partial \hat{x}_{i_{j}}}
      \ldots\pdfFrac{\hat{\varphi}_{k}}{\hat{x}_{i_{k}}}\ldots
      \frac{\partial\varphi_m}{\partial x_{i_{m}}}\right)\\
    =&0,  
  \end{array}
\end{displaymath}
where $S_{m}$ denotes the symmetry group of order $m$
and $\hat{\varphi}_{j}$ the omitting of $\varphi_{j}$; 
the first equality follows from the Laplace formula
and the last from the symmetry of
$\frac{\partial^{2} \varphi_{k}}{\partial x_{i_{j}}\partial x_{i_{k}}}$
and the anti-symmetry of the Levi-Civita symbol $e_{i_1,i_2,\ldots,i_m}$.
  
Third, we apply the classical divergence theorem to obtain
\begin{displaymath}
  \begin{array}{l}
    I_B = \int_{\mathbb{B}^m} \nabla\cdot \mathbf{g}(\mathbf{x})\, \dif\mathbf{x}
    = \oint_{\partial \mathbb{B}^m} \mathbf{n}_{\partial \mathbb{B}^m}\cdot
    \mathbf{g}(\mathbf{z})\, \dif\mathbf{z}. 
  \end{array}
\end{displaymath}

Last, we define $\tilde{\mathbf{z}}_{r,s}
:= (z_{1},\ldots,z_{r-1},s,z_{r+1},\ldots,z_{m})$,
$B_{r,s}:=\left\{\tilde{\mathbf{z}}_{r,s}
  \ |\ z_{i}\in(0,1)\right\}$,
where $s=0$ or 1, 
and identify $I_B$ with the LHS of (\ref{eq:divThm}):
\begin{displaymath}
  \begin{aligned}
    I_B&=\oint_{\partial \mathbb{B}^m}\mathbf{n}_{\partial \mathbb{B}^m}
    \cdot\mathbf{g}(\mathbf{z})\dif \mathbf{z}
    =\sum_{r=1}^{m}\sum_{s=0}^{1}\int_{B_{r,s}}
    \mathbf{n}_{\partial \mathbb{B}^m}\cdot\mathbf{g}(\mathbf{z})
    \dif \mathbf{z}
    \\
    &=\sum_{r=1}^{m}\sum_{s=0}^{1}(-1)^{s+1}
    \int_{\mathbb{B}^{m-1}}\sum_{j=1}^{m}(-1)^{r+j}F_{j}
    (\varphi(\tilde{\mathbf{z}}_{r,s})) K_{r,j}\dif z_{1}
    \cdots\dif z_{r-1}\dif z_{r+1}\cdots\dif z_{m}
    \\
    &=\sum_{r=1}^{m}\sum_{s=0}^{1}\int_{\varphi(B_{r,s})}
    \mathbf{F}(\mathbf{p})\cdot\mathbf{n}_{\mathcal{S}}
    \dif \mathbf{p}
    =\oint_{\varphi(\partial \mathbb{B}^m)}\mathbf{F}\cdot
    \mathbf{n}_{\mathcal{S}}\dif \mathbf{p}, 
  \end{aligned}
\end{displaymath}
where the second step follows
from the relation between $\mathbb{B}^m$ and $B_{r,s}$; 
the third from (\ref{eq:vecG}),
Definition \ref{def:OutwardNormalVec},
and the fact that the low face $B_{r,0}$
and the high face $B_{r,1}$
respectively contribute to factors $-1$ and $+1$; 
the fourth step from 
Definition \ref{def:OutwardNormalVec}
and the fact that $K_{r,s}$ is the determinant of
the Jacobian matrix for $\varphi(B_{r,s})$; 
and the last step 
from the relation between $\mathbb{B}^m$ and $B_{r,s}$.
\end{proof}


\begin{theorem}
  \label{thm:GeneralizedReynoldsTransportTheorem}
  A moving cycle
  $\mathcal{S}(t):=\varphi(\partial \mathbb{B}^m, t)$
  and a ${\mathcal C}^1$ scalar field $f$
  satisfy
  \begin{equation}
    \label{eq:ReynoldsTransport}
  \frac{\dif}{\dif t} 
  \left( \int_{\mathbb{B}^m} 
    f (\varphi (\mathbf{x},t),t) J_{\varphi}(t)
    \dif  \mathbf{x}\right)
  =
  \int_{\mathbb{B}^m}   \partial_{t}f 
    J_{\varphi}(t)\dif  \mathbf{x}
  +\oint_{\mathcal{S} (t) } f (\mathbf{p},t)
  \partial_{t}\varphi \cdot\mathbf{n}_{\mathcal{S}}\dif\mathbf{p},
  \end{equation}
  where
  $\mathbf{n}_{\mathcal{S}}$ is the outward normal vector 
  of $\mathcal{S}(t)$ as in Definition \ref{def:OutwardNormalVec}. 
\end{theorem}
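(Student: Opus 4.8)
The plan is to differentiate under the integral sign and then convert the leftover volume integral into a surface integral by a computation patterned on the proof of Theorem~\ref{thm:div}. Write $I(t):=\int_{\mathbb{B}^m} f(\varphi(\mathbf{x},t),t)\,J_{\varphi}(\mathbf{x},t)\,\dif\mathbf{x}$. Since $f$ and $\varphi$ (the latter $\mathcal{C}^2$ in $\mathbf{x}$, as was already tacitly used in Theorem~\ref{thm:div}) are smooth on the compact set $\overline{\mathbb{B}^m}$, the Leibniz rule justifies differentiation under the integral, and the product/chain rule gives
\[
I'(t)=\int_{\mathbb{B}^m}\partial_t f\,J_{\varphi}\,\dif\mathbf{x}
+\int_{\mathbb{B}^m}\Big[(\nabla f\cdot\partial_t\varphi)\,J_{\varphi}+f\,\partial_t J_{\varphi}\Big]\dif\mathbf{x},
\]
where $\nabla f$ is evaluated at $\varphi(\mathbf{x},t)$. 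The first integral is already the first term on the RHS of \eqref{eq:ReynoldsTransport}, so it remains to identify the second integral with $\oint_{\mathcal{S}(t)} f\,\partial_t\varphi\cdot\mathbf{n}_{\mathcal{S}}\,\dif\mathbf{p}$.

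To that end I would reuse the cofactors $K_{i,j}$ of $\dif\varphi$ from the proof of Theorem~\ref{thm:div} and set
$h_i:=\sum_{j=1}^m(-1)^{i+j}K_{i,j}\,f(\varphi(\mathbf{x},t),t)\,(\partial_t\varphi)_j$, i.e.\ the vector field $\mathbf{g}$ of \eqref{eq:vecG} with $F_j(\varphi)$ replaced by $f(\varphi)(\partial_t\varphi)_j$. The central claim is
\[
\textstyle\sum_{i=1}^m\partial_{x_i}h_i=(\nabla f\cdot\partial_t\varphi)\,J_{\varphi}+f\,\partial_t J_{\varphi}.
\]
Expanding $\partial_{x_i}h_i$ by the product rule produces three groups of terms. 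In the group where $\partial_{x_i}$ hits $K_{i,j}$, the inner sum $\sum_i(-1)^{i+j}\partial_{x_i}K_{i,j}$ vanishes by the Piola-type identity established as the penultimate display in the proof of Theorem~\ref{thm:div}. In the group where $\partial_{x_i}$ hits $f(\varphi)$, Cramer's rule applied exactly as in \eqref{eq:DetM} but with the scalar $f\circ\varphi$ in place of $F_j\circ\varphi$ gives $\sum_i(-1)^{i+j}\partial_{x_i}(f\circ\varphi)K_{i,j}=J_{\varphi}\,\partial_{p_j}f$, so this group equals $\sum_j(\partial_t\varphi)_j J_{\varphi}\,\partial_{p_j}f=J_{\varphi}(\nabla f\cdot\partial_t\varphi)$. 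In the remaining group $\partial_{x_i}$ hits $(\partial_t\varphi)_j=\partial_t(\dif\varphi)_{ij}$, yielding $f(\varphi)\sum_{i,j}(-1)^{i+j}K_{i,j}\,\partial_t(\dif\varphi)_{ij}=f(\varphi)\,\partial_t\det(\dif\varphi)=f\,\partial_t J_{\varphi}$ by Jacobi's formula for differentiating a determinant (cf.\ Lemma~\ref{lem:JacobiFormula}). Summing the three groups proves the claim.

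With the claim in hand, the classical divergence theorem on the cube gives $\int_{\mathbb{B}^m}\sum_i\partial_{x_i}h_i\,\dif\mathbf{x}=\oint_{\partial\mathbb{B}^m}\mathbf{n}_{\partial\mathbb{B}^m}\cdot\mathbf{h}\,\dif\mathbf{z}$, and decomposing $\partial\mathbb{B}^m$ into its $2m$ faces $B_{r,s}$ identifies the boundary integral with $\oint_{\mathcal{S}(t)} f\,\partial_t\varphi\cdot\mathbf{n}_{\mathcal{S}}\,\dif\mathbf{p}$ by Definition~\ref{def:OutwardNormalVec} and the fact that on the image of $B_{r,s}$ the cofactor $K_{r,j}$ is precisely the $(m-1)\times(m-1)$ minor that produces the area element and outward normal of that face --- this is verbatim the last step of the proof of Theorem~\ref{thm:div}. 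It is essential here that that step evaluates the relevant vector field only on the surface, so no globally defined extension of $\partial_t\varphi$ to $\mathbb{R}^m$ is needed; this is exactly what keeps the argument valid on self-intersecting cycles. The main obstacle will be the bookkeeping in the claim --- tracking the signs $(-1)^{i+j}$, invoking Cramer's rule for $f\circ\varphi$, and recognizing the $f\,\partial_t J_{\varphi}$ term as Jacobi's formula --- together with spelling out the smoothness hypotheses (so that $\partial_{x_i}K_{i,j}$ and the commuting mixed partials $\partial_{x_i}\partial_t\varphi_j$ are legitimate) and the justification of differentiating under the integral sign; none of the orientation subtleties reappear, since they are already packaged into Definition~\ref{def:OutwardNormalVec} and the face decomposition borrowed from Theorem~\ref{thm:div}.
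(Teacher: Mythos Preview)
Your proposal is correct and follows the same overall strategy as the paper: differentiate under the integral sign, then convert the leftover volume integral into the surface integral over $\mathcal{S}(t)$. The paper's proof is much more compact, however. After the differentiation step it invokes Jacobi's formula (Lemma~\ref{lem:JacobiFormula}) to write $\partial_t J_\varphi = J_\varphi\,\nabla\cdot(\partial_t\varphi)$, regroups the integrand as $\bigl[\partial_t f + \nabla\cdot(f\,\partial_t\varphi)\bigr]J_\varphi$, and then applies Theorem~\ref{thm:div} directly with $\mathbf{F}=f\,\partial_t\varphi$ to finish in one line. Your three-group cofactor computation is exactly what that two-line citation unpacks to: your Piola-identity group and Cramer group together reproduce the proof of Theorem~\ref{thm:div}, and your third group is the Jacobi-formula step. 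What your more explicit route buys is that you never need to regard $f\,\partial_t\varphi$ as a $\mathcal{C}^1$ vector field on $\mathbb{R}^m$ --- which is awkward when $\varphi$ is non-injective, since $\partial_t\varphi$ is only a function of $\mathbf{x}\in\mathbb{B}^m$, not of $\mathbf{p}\in\mathbb{R}^m$. The paper's direct citation of Theorem~\ref{thm:div} and Lemma~\ref{lem:JacobiFormula} glosses over this point, but the underlying computations that justify those citations are identical to yours.
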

\begin{proof}
  The time-independence of $\mathbb{B}^m$ 
  and Jacobi's formula (\ref{eq:JacobiEquation}) yield
  \begin{displaymath}
    \renewcommand{\arraystretch}{1.2}
    \begin{array}{rl}
    \frac{\dif }{\dif  t} 
    \left( \int_{\mathbb{B}^m} 
      f 
      J_{\varphi}(t)
    \dif  \mathbf{x} \right)
    &=\int_{\mathbb{B}^m} 
    \left(  \partial_{t}f+ \nabla_{\mathbf{x}} 
    f\cdot \partial_{t}\varphi+ f \nabla_{\mathbf{x}} 
    \cdot (\partial_{t}\varphi)\right) J_{\varphi}(t)
      \dif  \mathbf{x}
      \\
    &=\int_{\mathbb{B}^m}   \partial_{t}f 
    J_{\varphi}(t)\dif  \mathbf{x}+ 
    \int_{\mathbb{B}^m} \nabla_{\mathbf{x}}
    \cdot(f\partial_{t}\varphi) J_{\varphi}(t)
       \dif  \mathbf{x}
    \end{array}
    \end{displaymath}
    and the proof is completed by Theorem \ref{thm:div}. 
\end{proof}

Theorem \ref{thm:GeneralizedReynoldsTransportTheorem}
 is our customized form of Reynolds transport theorem
 for the $m$-cube. 
Although the integral on the LHS of (\ref{eq:ReynoldsTransport})
 has the fixed domain $\mathbb{B}^m$,
 any point $\mathbf{x}\in \mathbb{B}^m$ moves under the action
 of $\varphi$,
 and thus this integral is essentially an integral
 over a moving region. 

\subsection{Donating regions and flux identities}
\label{sec:FluxIdentity}

\begin{definition}
  \label{def:DR}
  The \emph{donating region 
  of a moving hypersurface} $\mathcal{S}(t)$
  in the flow of a $\mathcal{C}^1$ velocity field 
  $\mathbf{u}(\mathbf{x},t)$ 
  over a time interval $(t_{0},t_{0}+k)$ is
  \begin{equation}
    \label{eq:DR}
    \mathcal{D}_{\mathcal{S}}(t_{0},k)
    = \cup_{n\in\mathbb{Z}\setminus\{0\}}
    \mathcal{D}_{\mathcal{S}}^{n}(t_{0},k)
    := \cup_{n\in\mathbb{Z}\setminus\{0\}}
    \{\mathbf{p}\in\mathbb{R}^{m}\;|\;
    \deg(\chi,\mathbb{B}^m,\mathbf{p})=n\}, 
  \end{equation}
  where $\mathcal{D}_{\mathcal{S}}^{n}(t_{0},k)$
  is called the \emph{donating region with index $n$}.
\end{definition}

By a comparison of the donating region $\mathcal{D}_{\mathcal{S}}(t_{0},k)$
 in (\ref{eq:DR})
 to the generating cycle $\mathcal{G}_{\mathcal{D}}(t_{0},k)$
 in (\ref{eq:generatingCycleAsImage})
 and the properties of topological degree presented
 in Section \ref{sec:topological-degree},
 the boundary of $\mathcal{D}_{\mathcal{S}}(t_{0},k)$
 for any index $n\ne 0$
 is a subset of $\mathcal{G}_{\mathcal{D}}(t_{0},k)$. 
Since all donating regions can be determined from
 $\mathcal{G}_{\mathcal{D}}(t_{0},k)$. 
 we consider it as an \emph{explicit construction}
 of donating regions of all indices.

The area formula
 \cite[p. 69]{giaquinta1998cartesian}
 \cite[p. 125]{krantz2008geometric} 
 states that a Lipschitz continuous function
 $\varphi: \Omega\rightarrow \mathbb{R}^{m}$
 and a scalar function $f: \mathbb{R}^m\rightarrow \mathbb{R}$
 satisfy
\begin{equation}
  \label{eq:AreaFormula}
  \int_{\Omega} f (\varphi (\mathbf{x}))
  J_{\mathbf{x}}(\varphi)\dif\mathbf{x}
  = \sum_{n \in \mathbb{Z}\setminus\{0\}}
  n\int_{\mathcal{D}^{n} } f (\mathbf{y})\dif\mathbf{y},
\end{equation}
where $\mathcal{D}^{n}:=\{\mathbf{y}\;|\;
 \deg(\varphi,\Omega,\mathbf{y})=n\}$. 
When $\varphi$ is injective,
the only nonempty $\mathcal{D}^{n}$
is either $\mathcal{D}^{1}$ or $\mathcal{D}^{-1}$; 
then (\ref{eq:AreaFormula}) reduces
to the formula of integration by substitution. 

\begin{theorem}[Flux identities]
  \label{thm:fluxIdentityMain}
  For a $\mathcal{C}^1$ velocity 
  $\mathbf{u}:\mathbb{R}^{m}\times\mathbb{R}\rightarrow\mathbb{R}^m$, 
  a scalar field $f$
  satisfying the conservation law \eqref{eq:ConservationLaw}, 
  and a moving hypersurface
  $\mathcal{S}(t)$, 
  we have 
  \begin{equation}
  \label{eq:fluxIdentityMain}
  \begin{aligned}
    &\int_{t_{0}}^{t_{0}+k}\int_{\mathcal{S}(t)}
    f(\mathbf{x},t)
    \bigl[\mathbf{u} (\mathbf{x},t)- \partial_{t}\mathcal{S}(t)\bigr]
    \cdot \mathbf{n}_{\mathcal{S}}\,\dif \mathbf{x}\dif t
    \\
    =&\sum_{n\in\mathbb{Z}\setminus\{0\}}
    n\int_{\mathcal{D}_{\mathcal{S}}^{n}(t_{0},k)}
    f(\mathbf{x},t_{0})\,\dif\mathbf{x}
    =\oint_{\mathcal{G}_{\mathcal{D}}(t_{0},k)}
    \mathbf{F}(\mathbf{x},t_{0})
    \cdot\mathbf{n}_{\mathcal{G}_{\mathcal{D}}}\,\dif\mathbf{x},
    \end{aligned}
  \end{equation}
  where $\mathbf{n}_{\mathcal{S}}$ is the outward unit normal vector 
  on $\mathcal{S}$ given by Definition \ref{def:OutwardNvecForSurf},
  the donating region $\mathcal{D}_{\mathcal{S}}^{n}(t_{0},k)$
  and the generating cycle $\mathcal{G}_{\mathcal{D}}(t_{0},k)$
  are respectively given by Definitions
  \ref{def:DR} and \ref{def:generatingCycle}, 
  and 
  $\mathbf{F}$ is a vector field satisfying 
  $\nabla \cdot \mathbf{F} (\mathbf{x},t_0)= f (\mathbf{x},t_{0})$, 
  e.g., $\mathbf{F}=\left(\int_{\xi}^{x_{1}}
  f(s,x_{2},\ldots,x_{m},t_{0})\dif s,0,\hdots,0\right)$ 
  where $\xi$ is a fixed real number.
\end{theorem}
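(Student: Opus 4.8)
The plan is to prove the two equalities in \eqref{eq:fluxIdentityMain} separately, in the natural order. The \textbf{first equality}, relating the Eulerian flux integral to the weighted sum over donating regions, is the bridge between the physics and the topology; the \textbf{second equality}, relating that weighted sum to a surface integral over the generating cycle, is essentially the divergence theorem for cycles (Theorem~\ref{thm:div}) combined with the area formula. I expect the first equality to be the main obstacle, because it requires integrating a time-dependent identity and carefully matching orientations.

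For the \textbf{second equality}, I would start from the area formula \eqref{eq:AreaFormula} applied with $\Omega=\mathbb{B}^m$, $\varphi=\chi$ (the map in \eqref{eq:mapchi}, which is ${\mathcal C}^1$), and the integrand $f(\cdot,t_0)$. By Definition~\ref{def:DR}, the sets $\{\mathbf{y}\mid \deg(\chi,\mathbb{B}^m,\mathbf{y})=n\}$ are exactly the donating regions $\mathcal{D}_{\mathcal{S}}^n(t_0,k)$, so \eqref{eq:AreaFormula} gives
\begin{displaymath}
  \sum_{n\in\mathbb{Z}\setminus\{0\}}
  n\int_{\mathcal{D}_{\mathcal{S}}^{n}(t_{0},k)} f(\mathbf{x},t_{0})\,\dif\mathbf{x}
  = \int_{\mathbb{B}^m} f(\chi(\mathbf{x}),t_0)\,J_{\chi}\,\dif\mathbf{x}.
\end{displaymath}
Since $\nabla\cdot\mathbf{F}(\cdot,t_0)=f(\cdot,t_0)$, the right-hand side equals $\int_{\mathbb{B}^m}\nabla\cdot\mathbf{F}(\chi(\mathbf{x}),t_0)\,J_{\chi}\,\dif\mathbf{x}$, and by Theorem~\ref{thm:div} (with $\varphi=\chi$) this is $\oint_{\chi(\partial\mathbb{B}^m)}\mathbf{F}\cdot\mathbf{n}\,\dif\mathbf{x}$. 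By Lemma~\ref{lem:generatingCycleAsImage}, $\chi(\partial\mathbb{B}^m)=\mathcal{G}_{\mathcal{D}}(t_0,k)$ with the orientation matching Definition~\ref{def:OutwardNormalVec}, which yields the second equality.

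For the \textbf{first equality}, the plan is to introduce the time-dependent family of cycles and apply the generalized Reynolds transport theorem. Define $\chi_\tau(\mathbf{z}):=\phi_{t_0+\tau k}^{-\tau k}(\mathcal{S}(\mathbf{z},t_0+\tau k))$, so that $\chi_0=\mathcal{S}(\cdot,t_0)$ and $\chi_1=\phi_{t_0+k}^{-k}\circ\mathcal{S}(\cdot,t_0+k)$; restricted to $\partial\mathbb{B}^{m-1}$ these trace out the ``side'' of the generating cycle. Consider the quantity $Q(\tau):=\int_{\mathbb{B}^{m-1}} f(\chi_\tau(\mathbf{z}),t_0)\,J_{\mathbf{z}}(\chi_\tau)\,\dif\mathbf{z}$, a spatial integral over the advected pullback of $\mathcal{S}(t)$. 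The key computations are: (i) by the conservation law \eqref{eq:ConservationLaw} together with Jacobi's formula (Lemma~\ref{lem:JacobiFormula}), the quantity $f(\phi(\mathbf{p},t),t)\,J(\mathbf{p},t)$ is constant along pathlines, which lets me rewrite $Q(\tau)$ as an integral of $f(\cdot,t_0+\tau k)$ over $\mathcal{S}(t_0+\tau k)$ itself (pulling the Jacobian through); and (ii) differentiating $Q(\tau)$ via the Reynolds transport theorem for the moving surface $\mathcal{S}(t_0+\tau k)$ — a codimension-one version of Theorem~\ref{thm:GeneralizedReynoldsTransportTheorem} — produces the integrand $f[\mathbf{u}-\partial_t\mathcal{S}]\cdot\mathbf{n}_{\mathcal{S}}$. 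Integrating $\dif Q/\dif\tau$ over $\tau\in[0,1]$ then gives
\begin{displaymath}
  Q(1)-Q(0)
  = \frac{1}{k}\int_{t_0}^{t_0+k}\!\!\int_{\mathcal{S}(t)}
    f[\mathbf{u}-\partial_t\mathcal{S}]\cdot\mathbf{n}_{\mathcal{S}}\,\dif\mathbf{x}\,\dif t
  \cdot k,
\end{displaymath}
with the factor $k$ from the change of variables $t=t_0+\tau k$. Meanwhile $Q(1)-Q(0)$, by the area formula applied to $\chi_1$ and $\chi_0$ and an additivity/telescoping argument on topological degree, equals the difference of the signed-volume contributions, which by the additivity axiom (TPD-2) and the structure of $\mathcal{G}_{\mathcal D}$ equals $\sum_n n\int_{\mathcal{D}^n_{\mathcal S}} f(\cdot,t_0)$.

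The hard part will be step (i)–(ii): making rigorous the manipulation that turns $Q(\tau)$ into a surface integral and differentiating it correctly, since $\chi_\tau$ need not be injective (pathlines may self-intersect), so one cannot simply invoke the classical change-of-variables or the classical Reynolds theorem — this is precisely why Theorems~\ref{thm:div} and~\ref{thm:GeneralizedReynoldsTransportTheorem} were proved for (possibly self-intersecting) cycles. I would handle this by working with the pullback to the fixed parameter cube $\mathbb{B}^{m-1}$ throughout, never passing to the image, and invoking the conservation law in the Lagrangian form $\partial_t[f(\phi_{t_0}^{t-t_0}(\mathbf{p}),t)J] = 0$. A secondary subtlety is verifying that the sign conventions in Theorem~\ref{thm:FluxWithPsi} and Definition~\ref{def:OutwardNvecForSurf} propagate consistently so that the signs of $n$ on the donating-region side match the sign of the flux integrand; Lemma~\ref{lem:JacobiGreaterThanZero} ($J_\phi>0$) is what guarantees the orientation bookkeeping closes up, and Theorem~\ref{thm:fluxingIndexAsMapDegree} is what identifies the degree $\deg(\chi,\mathbb{B}^m,\mathbf{p})$ with the fluxing index, so that the middle expression in \eqref{eq:fluxIdentityMain} is genuinely a sum over flux sets.
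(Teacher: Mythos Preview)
Your argument for the \textbf{second equality} is correct and is exactly what the paper does: area formula \eqref{eq:AreaFormula} with $\varphi=\chi$ converts the donating-region sum into $\int_{\mathbb{B}^m} f(\chi,t_0)J_\chi$, then Theorem~\ref{thm:div} and Lemma~\ref{lem:generatingCycleAsImage} finish it.

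The \textbf{first equality} plan has a genuine gap. Your map $\chi_\tau:\mathbb{B}^{m-1}\to\mathbb{R}^m$ is not between equal-dimensional spaces, so $J_{\mathbf z}(\chi_\tau)$ is not a determinant and $Q(\tau)$ is not well defined as written. Even reading $Q(\tau)$ as a surface integral $\int_{\mathcal S(t)}f\,\dif A$ after your step~(i), step~(ii) fails: differentiating a scalar surface integral via a codimension-one Reynolds formula never produces $f[\mathbf u-\partial_t\mathcal S]\cdot\mathbf n_{\mathcal S}$, because the normal direction does not enter a scalar area integral. And the proposed identification $Q(1)-Q(0)=\sum_n n\int_{\mathcal D^n}f$ cannot hold: the left side is a difference of $(m{-}1)$-dimensional integrals, the right an $m$-dimensional one, so the ``telescoping on degree'' has no chance. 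The area formula \eqref{eq:AreaFormula} you invoke there applies only to maps between spaces of the same dimension.

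The paper stays $m$-dimensional throughout and uses a \emph{shrinking-interval} device: for $\tau\in[0,k]$ set $\mathcal G(\tau):=\mathcal G_{\mathcal D}(t_0+\tau,k-\tau)$, $\mathcal D^n(\tau):=\mathcal D^n_{\mathcal S}(t_0+\tau,k-\tau)$, and let $\varphi(\cdot,\tau):\mathbb B^m\to\mathbb R^m$ be the corresponding $\chi$-map. Then $I(\tau):=\sum_n n\int_{\mathcal D^n(\tau)}f(\mathbf x,t_0+\tau)\,\dif\mathbf x=\int_{\mathbb B^m}f(\varphi,\tau)J_\varphi\,\dif\mathbf x$ is a genuine $m$-volume integral, so Theorem~\ref{thm:GeneralizedReynoldsTransportTheorem} applies directly; combined with \eqref{eq:ConservationLaw} and Theorem~\ref{thm:div} one gets $\dif I/\dif\tau=\oint_{\mathcal G(\tau)}f(V_{\mathbf n,\mathcal G}-\mathbf u\cdot\mathbf n_{\mathcal G})\,\dif\mathbf p$. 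The key observation you are missing is that every face of $\mathcal G(\tau)$ except $\mathcal S(t_0+\tau)$ is a flow-preimage anchored at a later time and hence moves with the fluid, so the integrand vanishes off $\mathcal S(t_0+\tau)$ and the cycle integral collapses to $-\int_{\mathcal S(t_0+\tau)}f[\mathbf u-\partial_t\mathcal S]\cdot\mathbf n_{\mathcal S}$. Integrating in $\tau$ and using $I(k)=0$ (the interval has collapsed) gives the first equality. If you wish to keep an $(m{-}1)$-slice viewpoint, the correct object is $\widetilde Q(\tau):=\int_{\mathbb B^{m-1}}f(\chi(\mathbf z,\tau),t_0)\,J_\chi(\mathbf z,\tau)\,\dif\mathbf z$ with the \emph{full} $m\times m$ Jacobian; then one \emph{integrates} $\widetilde Q$ over $\tau\in[0,1]$ (Fubini on $\mathbb B^m$) rather than differentiates it, and the determinant identity in the proof of Theorem~\ref{thm:FluxWithPsi} together with the Lagrangian conservation $f(\cdot,t_0)=f(\cdot,t)\det\dif\phi$ shows $\widetilde Q(\tau)=k\int_{\mathcal S(t_0+\tau k)}f[\mathbf u-\partial_t\mathcal S]\cdot\mathbf n_{\mathcal S}$ pointwise.
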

\begin{proof}
  Since $t_0$ and $k$ are fixed in this proof, 
  we adopt shorthand notations, 
  \begin{equation}
    \label{eq:shorthandsInFluxIden}
    \forall \tau\in(0,k), \quad
    \mathcal{G}(\tau):=\mathcal{G}_{\mathcal{D}}(t_{0}+\tau,k-\tau),
    \quad
    \mathcal{D}^{n}(\tau)
    :=\mathcal{D}_{\mathcal{S}}^{n}(t_{0}+\tau,k-\tau).
  \end{equation}
  Also, we assume $t_0=0$ since this incurs no loss of generality. 
  
  Consider at time $\tau$
  a particle $\mathbf{p}(\tau)\in \mathcal{G}(\tau)$; 
  its velocity can be expressed as
  $V(\mathbf{p},\tau):=\partial_{\tau}
  \mathcal{G}(\tau)(\mathbf{y}_{\mathbf{p}})$, 
  where $\mathbf{y}_{\mathbf{p}}\in \mathbb{R}^{m-1}$
  is the parameter of $\mathbf{p}(\tau)$
  on $\mathcal{G}(\tau)$.
  Denote by $\mathbf{n}_{\mathcal{G}, \mathbf{p}}(\tau)$
  the normal vector of $\mathcal{G}(\tau)$ at $\mathbf{p}(\tau)$
  according to Definition \ref{def:OutwardNormalVec}
  and define $V_{\mathbf{n}, \mathcal{G}}(\mathbf{p},\tau)
  := V(\mathbf{p},\tau)\cdot \mathbf{n}_{\mathcal{G}, \mathbf{p}}(\tau)$.
  The construction of $\mathcal{G}(\tau)$
  in (\ref{eq:FluxGeneratingHypersurface}) yields 
  \begin{equation}
    \label{eq:velFluxGenCycle}
    V_{\mathbf{n}, \mathcal{G}}(\mathbf{p},\tau)
    = 
    \begin{cases}
      \partial_{t}\mathcal{S}(\mathbf{z}_{\mathbf{p}},\tau)
      \cdot \mathbf{n}_{\mathcal{S}, \mathbf{p}}(\tau)
      & \text{ if } \mathbf{p}(\tau)\in \mathcal{S} (\tau); 
      \\
      \mathbf{u}(\mathbf{p}(\tau),\tau) \cdot
      \mathbf{n}_{\mathcal{G},\mathbf{p}}(\tau)
      & \text{ if } \mathbf{p}(\tau)\in
      \mathcal{G}(\tau)\setminus\mathcal{S} (\tau).
    \end{cases}
  \end{equation}

Consider the rate of change of the integral of $f$
 over donating regions,
\begin{equation}
\label{eq:ProofFluxIdensity}
\renewcommand{\arraystretch}{1.2}
\begin{array}{rl}
  &\frac{\dif}{\dif\tau} \sum_{n}n\int_{\mathcal{D}^{n}(\tau)}
    f(\mathbf{x},\tau)\dif\mathbf{x}
    \\
    =&
    \frac{\dif}{\dif \tau} 
    \left( \int_{\mathbb{B}^m} 
    f (\varphi (\mathbf{x},\tau),\tau) J_{\varphi}(\tau)
    \dif  \mathbf{x}\right)
  \\
  =& \int_{\mathbb{B}^m}\partial_{\tau}f(\mathbf{x},t)
     J_{\varphi}(\tau)\dif  \mathbf{x}
     + \oint_{\mathcal{G}(\tau)}
     f(\mathbf{p},\tau)V_{\mathbf{n}, \mathcal{G}}(\mathbf{p},\tau)
     \dif \mathbf{p}
  \\
  =& -\int_{\mathbb{B}^m} \nabla\cdot(f\mathbf{u}) 
     J_{\varphi}(\tau)\dif  \mathbf{x}
     +\oint_{\mathcal{G}(\tau)}
     f(\mathbf{p},\tau)V_{\mathbf{n}, \mathcal{G}}(\mathbf{p},\tau)
     \dif \mathbf{p}
  \\
  =&-\int_{\mathcal{G}(\tau)}
     f(\mathbf{p},\tau)\mathbf{u}\cdot\mathbf{n}_{\mathcal{G},\mathbf{p}}
     \dif \mathbf{p}
     +\oint_{\mathcal{G}(\tau)}
     f(\mathbf{p},\tau)V_{\mathbf{n}, \mathcal{G}}(\mathbf{p},\tau)
     \dif \mathbf{p}
  \\
  =&\oint_{\mathcal{G}(\tau)}
     f(\mathbf{p},\tau)(V_{\mathbf{n}, \mathcal{G}}(\mathbf{p},\tau)
     -\mathbf{u}\cdot\mathbf{n}_{\mathcal{G},\mathbf{p}})\dif \mathbf{p}
  \\
  =&-\int_{\mathcal{S} (\tau)}
     f(\mathbf{x},\tau)
     \bigl[\mathbf{u} (\mathbf{x},t)- \partial_{t}\mathcal{S}(t)\bigr]
     \cdot\mathbf{n}_{\mathcal{S}}\dif \mathbf{x},
  \end{array}
\end{equation}
where the first step follows
from the area formula (\ref{eq:AreaFormula}),
the second from Theorem 
\ref{thm:GeneralizedReynoldsTransportTheorem}
and (\ref{eq:velFluxGenCycle}), 
the third from the scalar conversation law
\eqref{eq:ConservationLaw}; 
the fourth from Theorem \ref{thm:div}; 
and the last from (\ref{eq:velFluxGenCycle}). 

The first equality in (\ref{eq:fluxIdentityMain})
follows from 
integrating the first and the last lines in \eqref{eq:ProofFluxIdensity} 
over $[0,k]$
while the second equality in (\ref{eq:fluxIdentityMain})
from Theorem \ref{thm:div},
Lemma \ref{lem:generatingCycleAsImage},
and Definition \ref{def:DR}. 
\end{proof}

In our previous work,  we have always
associated the concept of donating regions
with fluxes
and the concept of flux sets
with particle crossings. 
Therefore 
 Theorem \ref{thm:fluxIdentityMain},
 rather than Definition \ref{def:DR}
 and Theorem \ref{thm:fluxingIndexAsMapDegree},
 is the \emph{de facto} proof
 that donating regions and flux sets are index-by-index equivalent,
 i.e., 
\begin{equation}
  \label{eq:equivFluxSetToDonatingRegion}
  \forall n\in\mathbb{Z},\quad
  \mathcal{D}_{\mathcal{S}}^{n}(t_{0},k)
  =\mathcal{F}_{\mathcal{S}}^{n}(t_{0},k).
\end{equation}



\section{Algorithm}
\label{sec:algorithm}

By Theorem \ref{thm:fluxIdentityMain},
 the Eulerian flux of a scalar function $f$
 through a moving surface 
 $\mathcal{S}(u,v,t)\subset\mathbb{R}^{3}$ over $[t_0,t_e]$
 equals a spatial integral 
 over the generating cycle at the initial time $t_0$.
This flux identity gives rise to an LFC algorithm
 that is conceptually very straightforward:
 constructing the generating cycle $\mathcal{G}_{\mathcal{D}}$
 and integrating $f$
 over $\mathcal{G}_{\mathcal{D}}$. 

As a fundamental building block,
 the action of the flow map $\phi_{t_0}^{t_e-t_0}$
 upon a set of isolated points
 is approximated by a $\kappa$th-order ODE solver
 such as an explicit Runge-Kutta method;  
 the algorithmic steps
 are listed in Algorithm \ref{alg:flowMap}. 

\begin{algorithm}
  \caption{\texttt{FlowMap}
  $(\mathbf{u}, \{p_i\}_{i=1}^n, t_0, t_e, \kappa, \Delta t)$}
  \label{alg:flowMap}
  \KwIn{A velocity field $\mathbf{u}(\mathbf{x},t)$, 
  a point set $\{ p_i\}_{i=1}^n,  p_i \in\mathbb{R}^{3}$, 
  the initial time $t_{0}$,
 the ending time $t_{e}$, 
 a $\kappa$th-order time integrator \texttt{ODESolve}, 
 a tentative time step size $\Delta t$.}
  \Pre{$(t_{e}-t_{0})\Delta t>0$, 
  $\mathbf{u}\in {\mathcal C}^{\kappa}(\mathbb{R}^3\times\mathbb{R})$.}
  \KwOut{a finite sequence of points $\{ q_i\}_{i=1}^n$.}
  \Post {$\forall i=1, \hdots, n$, 
  $ \left \| q_i-\phi_{t_{0}}^{t_{e}-t_{0}}(p_{i}) \right \|_{2}
  =\mathcal{O}\left( \left( \Delta t \right)^{\kappa} \right)$.}
  \LinesNumbered
  $m$$\gets$$\lceil\frac{t_{e}-t_{0}}{\Delta t}\rceil$, 
  $\Delta t$$\gets$$\frac{t_{e}-t_{0}}{m}$;
  
  $\{ q_i\}_{i=1}^n \gets \{ p_i\}_{i=1}^n$;
  
  \For{$j=0:m-1$}{
   $\{ q_i\}_{i=1}^n \gets \texttt{ODEsolve}(\mathbf{u}, \{ q_i\}_{i=1}^n,  t_0 + j \Delta t, \Delta t)$;
  }
  \Return {$\{ q_i\}_{i=1}^n$};
\end{algorithm}

\begin{algorithm}
  \caption{\texttt{GeneratingCycle}
  $(\mathbf{u}, \mathcal{S}, t_0, t_e, h, \Delta t, \kappa)$}
  \label{alg:generatingCycle}
  \KwIn{A velocity field $\mathbf{u}(\mathbf{x},t)$, 
	a moving simple surface $\mathcal{S}(u,v,t)$, 
	the initial time $t_{0}$, 
	the ending time $t_{e}$, 
	a spatial length scale $h$, 
	the time increment step $\Delta t$, 
	a $\kappa$th-order time integrator \texttt{ODESolver}.}
      \Pre{$\mathbf{u}\in
        {\mathcal C}^{\kappa}(\mathbb{R}^3\times\mathbb{R})$; 
	$\mathcal{S} \in \mathcal{C}^{\kappa}([0,1]^{2})$, 
        $\forall t\in(t_0,t_e], \mathcal{S}(t)\simeq
        \mathcal{S}(t_0)$, 
	$h>0$, 
	$(t_{e}-t_{0})\Delta t>0$, 
	$\kappa \in \{2, 4, 6\}$.}
      \KwOut{a set of six spline surfaces $\tilde{\mathcal{P}}_{i}$'s 
        whose union approximate the generating cycle 
        $\mathcal{G}_{\mathcal{D}}=\cup_{i=1}^{6}\mathcal{P}_{i}$.}
      \Post{$\forall(u,v)\in[0,1]^{2}$,
        $ \| \tilde{\mathcal{P}}_{i}(u,v)-
          \mathcal{P}_{i}(u,v)  \|_{2}=O(p_{\kappa}(h, \Delta t))$,
    where $p_{\kappa}(h, \Delta t)$ is a $\kappa$th-order polynomial
    in $h$ and $\Delta t$.}

 \LinesNumbered
$M=\lceil\frac{t_e-t_0}{\Delta t}\rceil, N = \lceil\frac{1}{h} \rceil$
\tcp*{Calculate the number of nodes} 

$\{u_i\}_{i=0}^{N}, \{v\}_{j=0}^{N}, 
u_i = \frac{i}{N}, v_j = \frac{j}{N}, i, j = 0, 1,..., N$
 \tcp*{Generate grids} 

$\{p_{i, j}^{1}\}_{i, j = 0}^{N}
\gets \{ \mathcal{S}(u_i, v_j, t_0) \}_{i, j = 0}^{N}$
\tcp*{$\{p_{i, j}^{i}\}_{i, j = 0}^{N}$
  are knots of $\tilde{\mathcal{P}}_{i}$
  in (\ref{eq:partitionGeneratingCycle})}

$\{p_{i, j}^{2}\}_{i, j = 0}^{N}
\gets \texttt{FlowMap}(\mathbf{u}, 
\{ \mathcal{S}(u_i, v_j, t_e) \}_{i, j = 0}^{N}, t_e, t_0, 
\kappa, -\Delta t)$

\For {$j = 0 : M$}{ 
$\{p_{i, j}^{3}\}_{i = 0}^{N}
\gets \texttt{FlowMap}(\mathbf{u}, 
\{ \mathcal{S}(0, v_{i}, t_0+j\Delta t) \}_{i = 0}^{N}, t_0+j \Delta t, 
t_0, \kappa, -\Delta t)$

$\{p_{i, j}^{4}\}_{i = 0}^{N}
\gets \texttt{FlowMap}(\mathbf{u}, 
\{ \mathcal{S}(u_{i}, 1, t_0+j\Delta t) \}_{i = 0}^{N}, 
t_0+j \Delta t, t_0, \kappa, -\Delta t)$

$\{p_{i, j}^{5}\}_{i = 0}^{N} \gets \texttt{FlowMap}
(\mathbf{u}, \{ \mathcal{S}(1,v_{N-i}, t_0+j\Delta t) \}_{i = 0}^{N}, 
t_0+j \Delta t, t_0, \kappa, -\Delta t) $

$\{p_{i, j}^{6}\}_{i = 0}^{N}  \gets \texttt{FlowMap}
(\mathbf{u}, \{ \mathcal{S}(u_{N-i}, 0, t_0+j\Delta t) \}_{i = 0}^{N}, 
t_0+j \Delta t, t_0, \kappa, -\Delta t)$
}

\For{$k = 1 : 6$}{
$\tilde{\mathcal{P}}_{k} \gets$ a $\kappa$ th-order spline fit through $p_{i,j}^{k}$
}
  
\Return $\{\tilde{\mathcal{P}}_{i}\}_{i=1}^{6}$
\end{algorithm}

The generating cycle 
$\mathcal{G}_{\mathcal{D}}=\chi(\partial \mathbb{B}^{3})$
is partitioned into six surfaces, 
\begin{equation}
  \label{eq:partitionGeneratingCycle}
\begin{array}{l}
\mathcal{P}_{1}=\chi((0,1)\times(0,1)\times\{0\}),\quad 
  \mathcal{P}_{2}=\chi((0,1)\times(0,1)\times\{1\}),
  \\
\mathcal{P}_{3}=\chi(\{0\}\times(0,1)\times(0,1)), \quad 
  \mathcal{P}_{4}=\chi((0,1)\times\{1\}\times(0,1)),
  \\
\mathcal{P}_{5}=\chi(\{1\}\times(0,1)\times(0,1)),\quad  
\mathcal{P}_{6}=\chi((0,1)\times\{0\}\times(0,1)), 
\end{array}
\end{equation}
each of which is approximated by a bivariate tensor-product spline
with the $\kappa$th-order accuracy. 
Then we assemble these splines
into a discrete approximation of $\mathcal{G}_{\mathcal{D}}$;
see Algorithm \ref{alg:generatingCycle} for more details.

To avoid the discontinuity of tangent spaces
 at the common boundaries of the six splines, 
 we obtain the Lagrangian flux
 by summing up the integrals
 of the scalar function over the six spline surfaces.
Each multi-dimensional integral is calculated
 by recursively applying standard one-dimensional
 Gauss-Legendre rules.
The following lemma details the algorithmic steps
 and guarantees the accuracy.
 
 \begin{lemma}
   \label{lem:DRCubature3d}
   Consider the integral of a trivariate polynomial $f(x,y,z)$
   over a tensor-product spline
   $\mathcal{S}(u,v)=\left\{ x(u,v),y(u,v),z(u,v) \right\}$
   oriented by Definition \ref{def:OutwardNvecForSurf},
   \begin{equation}
     \label{eq:integralToApprox}
     \begin{array}{l}
       I_{\mathcal{S}}(f) :=
     \int_{\mathcal{S}} F(x,y,z)\,\dif y\wedge \dif z, 
     \end{array}
   \end{equation}
   where $F(x,y,z)=\int_{\xi}^{x}f(s,y,z)\,\dif s$
   and $\xi$ is a fixed real number. 
   If ${\mathcal S}$ is of the $\kappa$th order
   and the total degree of $f$ is no greater than $q$, 
   then the choices
   \begin{equation}
     \label{eq:numQuadPoints}
     n\ge \left\lceil\frac{q+1}{2}\right\rceil\quad \text{ and }\quad
     m,h\ge \left\lceil\frac{1}{2}(q+3)(\kappa-1)\right\rceil
   \end{equation}
   yield an exact calculation of $I_{\mathcal{S}}(f)$,
   i.e., 
 \begin{equation}
      \label{eq:quadrature_rule}
      I_{\mathcal{S}}(f)
      = I_{q}(\mathcal{S},f)
      := \sum_{l_{1},l_{2}=1}^{\iota}\sum_{i=1}^{n}
      \sum_{j=1}^{m} \sum_{k=1}^{h}w_{l_{1}l_{2}ijk} 
      f(x_{l_{1}l_{2}ijk}, y_{l_{1}l_{2}jk},z_{l_{1}l_{2}jk}),
  \end{equation}
  where $\left\lceil \cdot \right\rceil$ denotes the ceiling function, 
  $\{\lambda_{i}^p\}_{i=1}^p$ and $\{\omega_{i}^p\}_{i=1}^p$ 
  respectively the nodes and weights
  of the $p$-points Gauss-Legendre rule over $[-1,1]$, 
  and 
  \begin{displaymath}
    \begin{aligned}
      &u_{l_{1}j}:=\frac{u_{l_{1}}-u_{l_{1}-1}}{2}\lambda_{j}^{m} 
      +\frac{u_{l_{1}}+u_{l_{1}-1}}{2},\quad
      v_{l_{2}k}:=\frac{v_{l_{2}}-v_{l_{2}-1}}{2}\lambda_{k}^{h} 
      +\frac{v_{l_{2}}+v_{l_{2}-1}}{2},
      \\
      &x_{l_{1}l_{2}ijk}:= \frac{x(u_{l_{1}j},v_{l_{2}k})-\xi}{2}
      \lambda_{i}^{n}+ \frac{x(u_{l_{1}j},v_{l_{2}k})+\xi}{2},
      \\
      &y_{l_{1}l_{2}jk}:=y(u_{l_{1}j},v_{l_{2}k}), 
      \quad z_{l_{1}l_{2}jk}:=z(u_{l_{1}j},v_{l_{2}k}),
      \\
      &w_{l_{1}l_{2}ijk}:=-\omega_i^n \omega_j^m\omega_k^{h}
      \frac{u_{l_{1}}-u_{l_{1}-1}}{2}\cdot\frac{v_{l_{2}}-v_{l_{2}-1}}{2}
      \cdot\frac{x(u_{l_{1}j},v_{l_{2}k})-\xi}{2}
      \left|\frac{\partial(y,z)}{\partial(u,v)}(u_{l_{1}j},v_{l_{2}k})\right|.
    \end{aligned}
  \end{displaymath}
\end{lemma}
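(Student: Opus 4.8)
The plan is to peel $I_{\mathcal{S}}(f)$ apart into three one-dimensional integrations --- the integration in $s$ concealed in the definition of $F$, and the two parameter integrations over $[0,1]^{2}$ --- and then to verify that each of them is reproduced \emph{exactly}, not merely approximately, by a Gauss--Legendre rule of the advertised size. The whole argument is a polynomial degree count together with the exactness-up-to-degree-$(2p-1)$ property of the $p$-point Gauss--Legendre rule.

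First I would pull the $2$-form $F\,\dif y\wedge\dif z$ in \eqref{eq:integralToApprox} back under the parametrization $\mathcal{S}(u,v)=(x(u,v),y(u,v),z(u,v))$; with $\mathcal{S}$ oriented by Definition \ref{def:OutwardNvecForSurf}, this gives
\begin{displaymath}
  I_{\mathcal{S}}(f) = -\int_{0}^{1}\!\int_{0}^{1}
  F\bigl(x(u,v),y(u,v),z(u,v)\bigr)\,
  \frac{\partial(y,z)}{\partial(u,v)}(u,v)\,\dif u\,\dif v,
\end{displaymath}
with the sign dictated by Definition \ref{def:OutwardNvecForSurf} (and absorbed into the absolute value appearing in the weights of \eqref{eq:quadrature_rule}). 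Next I would dispose of the integral defining $F$: the affine substitution $s=\tfrac{x-\xi}{2}\lambda+\tfrac{x+\xi}{2}$ gives $F(x,y,z)=\tfrac{x-\xi}{2}\int_{-1}^{1}f\!\left(\tfrac{x-\xi}{2}\lambda+\tfrac{x+\xi}{2},y,z\right)\dif\lambda$, and since $f$ has total degree at most $q$ the integrand is a polynomial of degree at most $q$ in $\lambda$. Hence the $n$-point Gauss--Legendre rule is \emph{exact} as soon as $2n-1\ge q$, i.e. $n\ge\lceil(q+1)/2\rceil$, and --- this is the crucial point --- the resulting identity for $F$ holds pointwise in $(u,v)$, so it may be substituted back into the double integral with no error incurred.

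It then remains to integrate $G(u,v):=F\bigl(x(u,v),y(u,v),z(u,v)\bigr)\,\partial(y,z)/\partial(u,v)$ over $[0,1]^{2}$. Here I would first partition $[0,1]^{2}$ along the breakpoints of the two knot sequences $\{u_{l}\}_{l=0}^{\iota}$ and $\{v_{l}\}_{l=0}^{\iota}$, so that on each of the $\iota^{2}$ subrectangles $x,y,z$ are honest bivariate polynomials of degree at most $\kappa-1$ in $u$ and at most $\kappa-1$ in $v$ (an order-$\kappa$ spline being piecewise of degree $\kappa-1$). On such a rectangle $G$ factors into $F\circ\mathcal{S}$ and the Jacobian, whose per-variable degrees I would bound separately: $F$ has total degree at most $q+1$ in $(x,y,z)$, so $F\circ\mathcal{S}$ has degree at most $(q+1)(\kappa-1)$ in each of $u,v$; and $\partial(y,z)/\partial(u,v)=y_{u}z_{v}-y_{v}z_{u}$ has degree at most $2\kappa-3$ in each of $u,v$, since differentiating one factor in a given variable drops that factor's degree by one. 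Consequently $G$ has degree at most $(q+1)(\kappa-1)+(2\kappa-3)=(q+3)(\kappa-1)-1$ in $u$, and likewise in $v$. Rescaling each subrectangle to $[-1,1]^{2}$ and applying the tensor-product Gauss--Legendre rule with $m$ nodes in $u$ and $h$ nodes in $v$ is therefore exact precisely when $2m-1\ge(q+3)(\kappa-1)-1$ and $2h-1\ge(q+3)(\kappa-1)-1$, i.e. $m,h\ge\lceil\tfrac12(q+3)(\kappa-1)\rceil$. Threading the three nested rules together with the Jacobians $\tfrac{x-\xi}{2}$, $\tfrac{u_{l_{1}}-u_{l_{1}-1}}{2}$ and $\tfrac{v_{l_{2}}-v_{l_{2}-1}}{2}$ of the affine maps onto $[-1,1]$ then produces exactly the sampling points $x_{l_{1}l_{2}ijk},y_{l_{1}l_{2}jk},z_{l_{1}l_{2}jk}$ and the weights $w_{l_{1}l_{2}ijk}$ of \eqref{eq:quadrature_rule}.

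The delicate step is the degree bookkeeping in the last paragraph. One must take the subdivision along knots seriously (the degree count is valid only on a single polynomial piece), measure the composition $F\circ\mathcal{S}$ by its degree in \emph{each} parameter separately rather than by total degree, and --- most importantly --- notice that the Jacobian contributes $2\kappa-3$ rather than the naive $2(\kappa-1)$: differentiation costs one degree, and it is precisely this single saved degree that makes the stated threshold $m,h\ge\lceil\tfrac12(q+3)(\kappa-1)\rceil$ the tight one rather than an off-by-one overestimate.
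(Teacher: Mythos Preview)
Your proposal is correct and follows essentially the same approach as the paper's proof: both reduce the surface integral to an iterated integral over $[-1,1]^{3}$ via the affine substitutions $s=\tfrac{x-\xi}{2}\lambda+\tfrac{x+\xi}{2}$ and the knot-interval rescalings, then invoke the degree-$(2p-1)$ exactness of $p$-point Gauss--Legendre after counting that the integrand has degree $q$ in $\lambda$ and $(q+3)(\kappa-1)-1$ in each of $u,v$. Your degree bookkeeping --- splitting $(q+1)(\kappa-1)$ from $F\circ\mathcal{S}$ and $2\kappa-3$ from the Jacobian --- is in fact more explicit than the paper's, which simply asserts the final degree $(q+3)(\kappa-1)-1$ without spelling out the contributions.
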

\begin{proof}
  Denote by $\left\{ u_{i} \right\}_{i=0}^{\iota },
  \left\{ v_{i} \right\}_{i=0}^{\iota }$ knots of the spline  
  and express a single piece of polynomial surface as
  $\mathcal{S}_{l_{1},l_{2}}
  :=\mathcal{S}|_{[u_{l_{1}-1},u_{l_{1}}]\times[v_{l_{2}-1},v_{l_{2}}]}$. 
  Converting the surface integral 
  to an integral over the cube $[-1,1]^{3}$, we have 
  \begin{displaymath}
    \begin{array}{l}
      -\sum_{l_{1},l_{2}=1}^{\iota}\int_{\mathcal{S}_{l_{1},l_{2}}}
      {F(x,y,z)\,\dif y \wedge \dif z}
      \\
      =\sum_{l_{1},l_{2}=1}^{\iota}\int_{u_{l_{1}-1}}^{u_{l_{1}}} 
      \int_{v_{l_{2}-1}}^{v_{l_{2}}}F(x,y,z)\left|\frac{\partial(y,z)}
      {\partial(u,v)} \right|\,\dif u \dif v
      \\
      =\sum_{l_{1},l_{2}=1}^{\iota}\int_{-1}^{1}
      \int_{-1}^{1}F(x,y,z)
      \left|\frac{\partial(y,z)}{\partial(\tilde{u},\tilde{v})}\right|
      \,\dif \tilde{u}_{l_{1}} \dif \tilde{v}_{l_{2}}
      \\
      =\sum_{l_{1},l_{2}=1}^{\iota}\int_{-1}^{1}
      \int_{-1}^{1}\int_{\xi}^{x}f(s,y,z)\,\dif s 
      \left|\frac{\partial(y,z)}{\partial(\tilde{u},\tilde{v})}\right|
      \,\dif \tilde{u}_{l_{1}} \dif \tilde{v}_{l_{2}}
      \\
      =\sum_{l_{1},l_{2}=1}^{\iota}\int_{[-1,1]^{3}}^{}
      f(s(\lambda),y(u,v),z(u,v))
      \left|\frac{\partial(y,z)}{\partial(\tilde{u},\tilde{v})}\right|
      \frac{x(u,v)-\xi}{2}\,\dif\lambda
      \dif\tilde{u}_{l_{1}}\dif \tilde{v}_{l_{2}}
      \\
      =\sum_{l_{1},l_{2}=1}^{\iota}\int_{[-1,1]^{3}}f(s(\lambda),y(u,v),z(u,v))
      \frac{x(u,v)-\xi}{2}\frac{u_{l_{1}}-u_{l_{1}-1}}{2}
      \frac{v_{l_{2}}-v_{l_{2}-1}}{2}
      \left|\frac{\partial(y,z)}{\partial(u,v)}\right|
      \,\dif\lambda \dif\tilde{u}_{l_{1}} \dif \tilde{v}_{l_{2}}, 
    \end{array}
  \end{displaymath}
where the first step follows from changes of variables 
and Definition \ref{def:OutwardNvecForSurf}, 
the second from changes of variables,
\begin{displaymath}
\begin{array}{l}
  u(\tilde{u}_{l_{1}})=\frac{u_{l_{1}}-u_{l_{1}-1}}{2}\tilde{u}_{l_{1}}
+\frac{u_{l_{1}}+u_{l_{1}-1}}{2}, \quad
v(\tilde{v}_{l_{2}})=\frac{v_{l_{2}}-v_{l_{2}-1}}{2}\tilde{v}_{l_{2}}
+\frac{v_{l_{2}}+v_{l_{2}-1}}{2},
\end{array}
\end{displaymath}
the fourth from the transformation 
$s(\lambda)=\frac{x-\xi}{2}\lambda+ \frac{x+\xi}{2}$,
and the fifth from 
\begin{displaymath}
  \begin{array}{l}
  \left|\frac{\partial(\tilde{u},\tilde{v})}{\partial(u,v)}\right|
  =\frac{u_{l_{1}}-u_{l_{1}-1}}{2}\frac{v_{l_{2}}-v_{l_{2}-1}}{2}.
  \end{array}
\end{displaymath}


We still need to show
that $I_{q}(\mathcal{S},f) = I_{\mathcal{S}}(f)$ holds. 
Because the degree of exactness of a $p$-points Gauss quadrature formula 
 is $2p-1$,
 we only need to verify that the choices in (\ref{eq:numQuadPoints})
 are sufficiently large,
 which indeed holds because
 degrees of the integrand in the last step of the above equation array 
 in term of $\lambda$, $\tilde{u}_{l_{1}}$, and $\tilde{v}_{l_{2}}$
 are $q$, $(q+3)(\kappa-1)-1$, and $(q+3)(\kappa-1)-1$. respectively. 
\end{proof}

We sum up our LFC algorithm in Algorithm \ref{alg:MLFC}
and formally prove its accuracy in Theorem \ref{thm:accuracyOfLFC}. 

\begin{algorithm}[t]
  \caption{\texttt{LFC3D}
  $(f, \mathbf{u}, \mathcal{S},  t_0, t_e, \texttt{nNodeS}, \texttt{nNodeT}, \kappa)$}
  \label{alg:MLFC}
  \KwIn{A scalar function $f$, 
	a velocity field $\mathbf{u}(\mathbf{x}, t)$, 
	a parameterized surface $\mathcal{S}(u,v,t)$, 
	the initial time $t_{0}$, 
	the ending time $t_e$, 
	the number of spatial subintervals \texttt{nNodeS}, 
	the number of temporal subintervals \texttt{nNodeT}, 
	a $\kappa$th-order ODESolver.}
      \Pre{
        $t_{e}>t_{0}$, $\kappa \in \{2, 4, 6\}$, 
        $\texttt{nNodeS}\in\mathbb{N}^+$,
        $\texttt{nNodeT}\in\mathbb{N}^+$;
        $\mathbf{u}\in{\mathcal C}^{\kappa}$, 
        $f(\mathbf{x},t_{0})\in \mathcal{C}^{\kappa+1}$, 
        $f$ and $\mathbf{u}$ satisfy \eqref{eq:ConservationLaw}; 
	$u\in[0,1]$, 
	$v\in[0,1]$,
        $\mathcal{S}(t_0)\simeq \mathcal{S}(t_e)$,
        $\mathcal{S}(t_0)\in \mathcal{C}^{\kappa}((0,1)^{2})$,
        $\mathcal{S}(t_e)
        \in \mathcal{C}^{\kappa}((0,1)^{2})$;
      }
      \KwOut{$I_{\kappa}$ as an estimate of the Eulerian flux.
        \begin{displaymath}
          \begin{array}{l}
            I_{E} = \int_{t_{0}}^{t_{e}}\int_{\mathcal{S}}
            f(\mathbf{x},t)(\mathbf{u}(\mathbf{x},t)
            -\partial_{t}\mathcal{S}(t)) \cdot \mathbf{n}_{\mathcal{S}}
            \,\dif \mathbf{x} \,\dif t.            
          \end{array}
        \end{displaymath}}
      \Post {
        $I_{\kappa}$ is a $\kappa$th-order approximation to $I_E$. 
        }

      \LinesNumbered
      $h\gets\frac{1}{\texttt{nNodeS}}$, 
      $\Delta t\gets\frac{t_{e}-t_{0}}{\texttt{nNodeT}}$;

      $\{\tilde{\mathcal{P}}_i\}_{i=1}^6 \gets 
      \texttt{GeneratingCycle}(\mathbf{u}, 
      \mathcal{S}, t_0, t_e, h, \Delta t, \kappa)$

      \For{$i=1:6$}{
        $I_{i}\gets$
        compute $I_{q}(\mathcal{\tilde{P}}_{i},f(\mathbf{x},t_{0}))$
        by (\ref{eq:quadrature_rule}) with $q=\kappa$;}

\Return {$I_{1}-I_{2}+I_{3}+I_{4}+I_{5}+I_{6}$};
\end{algorithm}

\begin{theorem}
  \label{thm:accuracyOfLFC}
  The output of Algorithm \ref{alg:MLFC}.
   is a $\kappa$th-order accurate approximation to the Eulerian flux $I_E$,
   i.e., the LHS in (\ref{eq:fluxIdentityMain}).
  More precisely,
   for the asympotic choice of $\Delta t= O(h)$,
   i.e., $\texttt{nNodeT}=O(\texttt{nNodeS})$
   in calling Algorithm \ref{alg:MLFC}, 
   we have
  \begin{equation}
    \label{eq:accuracyOfLFC}
    \left|I_{\kappa}-I_{E}\right| =\mathcal{O}(h^{\kappa}). 
  \end{equation}
\end{theorem}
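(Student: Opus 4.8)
The plan is to route the error through the spline cycle $\tilde{\mathcal{G}}_{\mathcal{D}}:=\bigcup_{i=1}^{6}\tilde{\mathcal{P}}_{i}$ returned by Algorithm~\ref{alg:generatingCycle} and to split
\[
  \left|I_{\kappa}-I_{E}\right|
  \le \underbrace{\left|I_{\kappa}-\oint_{\tilde{\mathcal{G}}_{\mathcal{D}}}
      \mathbf{F}\cdot\mathbf{n}\,\dif\mathbf{x}\right|}_{E_{\mathrm{quad}}}
  + \underbrace{\left|\oint_{\tilde{\mathcal{G}}_{\mathcal{D}}}
      \mathbf{F}\cdot\mathbf{n}\,\dif\mathbf{x}
      -\oint_{\mathcal{G}_{\mathcal{D}}}\mathbf{F}\cdot\mathbf{n}\,\dif\mathbf{x}
    \right|}_{E_{\mathrm{geo}}},
\]
where $\oint_{\mathcal{G}_{\mathcal{D}}}\mathbf{F}\cdot\mathbf{n}\,\dif\mathbf{x}=I_{E}$ \emph{exactly} by the flux identity~\eqref{eq:fluxIdentityMain}, and where, by the orientation bookkeeping of Lemma~\ref{lem:generatingCycleAsImage} together with the partition~\eqref{eq:partitionGeneratingCycle}, the signed sum $I_{1}-I_{2}+I_{3}+I_{4}+I_{5}+I_{6}$ returned by Algorithm~\ref{alg:MLFC} is precisely the cubature $I_{\kappa}(\cdot,f(\cdot,t_{0}))$ applied patch by patch to $\oint_{\tilde{\mathcal{G}}_{\mathcal{D}}}\mathbf{F}\cdot\mathbf{n}\,\dif\mathbf{x}$. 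It then remains to bound the two pieces.

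For $E_{\mathrm{quad}}$ I would invoke Lemma~\ref{lem:DRCubature3d} with $q=\kappa$: the rule~\eqref{eq:quadrature_rule} reproduces $\int_{\tilde{\mathcal{P}}_{i}}\mathbf{F}\cdot\mathbf{n}$ exactly whenever $f(\cdot,t_{0})$ is a polynomial of total degree at most $\kappa$, so $E_{\mathrm{quad}}=0$ in that case; for a general $f(\cdot,t_{0})\in\mathcal{C}^{\kappa+1}$ one writes $f$ as its degree-$\kappa$ Taylor polynomial plus a remainder that is $O(h^{\kappa+1})$ on each spline knot cell of size $O(h)$, and summing over the $O(h^{-2})$ cells yields $E_{\mathrm{quad}}=O(h^{\kappa+1})$. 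In either case $E_{\mathrm{quad}}=O(h^{\kappa})$.

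The crux is $E_{\mathrm{geo}}$. The postcondition of Algorithm~\ref{alg:generatingCycle}, the $O(\Delta t^{\kappa})$ flow-map error from Algorithm~\ref{alg:flowMap}, and the asymptotic regime $\Delta t=O(h)$ together give $\|\tilde{\mathcal{P}}_{i}-\mathcal{P}_{i}\|_{\infty}=O(h^{\kappa})$. Pulling each patch integral back to the parameter square and writing $\varphi_{i},\tilde{\varphi}_{i}:[0,1]^{2}\to\mathbb{R}^{3}$ for the exact and spline parametrizations and $\delta_{i}:=\varphi_{i}-\tilde{\varphi}_{i}=O(h^{\kappa})$, each contribution to $E_{\mathrm{geo}}$ reads $\int_{[0,1]^{2}}\bigl[F(\varphi_{i})\,J(\varphi_{i})-F(\tilde{\varphi}_{i})\,J(\tilde{\varphi}_{i})\bigr]$ with $J(\varphi_{i})=\partial(y,z)/\partial(u,v)$. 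The factor $F(\varphi_{i})-F(\tilde{\varphi}_{i})$ is $O(h^{\kappa})$ directly, since $\mathbf{F}$ is Lipschitz; the delicate piece is $F(\tilde{\varphi}_{i})\bigl[J(\varphi_{i})-J(\tilde{\varphi}_{i})\bigr]$, because a $\kappa$th-order (degree $\kappa-1$) spline matches the \emph{first} derivatives of $\varphi_{i}$ only to $O(h^{\kappa-1})$. The main obstacle, and the key step, is to recover the lost order here: expand $J(\varphi_{i})-J(\tilde{\varphi}_{i})$ as a sum of products of one derivative of a component of $\delta_{i}$ with factors that are bounded uniformly in $h$, and integrate by parts once in $u$ or in $v$ to move that derivative onto $F(\tilde{\varphi}_{i})$ and onto the remaining (bounded) spline derivatives; the resulting interior term carries a bare factor $\delta_{i}=O(h^{\kappa})$, and the boundary terms on $\partial[0,1]^{2}$ likewise carry a factor $\delta_{i}$ and are therefore $O(h^{\kappa})$ as one-dimensional integrals. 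Summing over the six patches gives $E_{\mathrm{geo}}=O(h^{\kappa})$, and combining with the bound on $E_{\mathrm{quad}}$ establishes~\eqref{eq:accuracyOfLFC}.
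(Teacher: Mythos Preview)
Your proof follows the same architecture as the paper's: identify $I_E=\oint_{\mathcal{G}_{\mathcal{D}}}\mathbf{F}\cdot\mathbf{n}$ via the flux identity~\eqref{eq:fluxIdentityMain}, verify from Lemma~\ref{lem:generatingCycleAsImage} and the partition~\eqref{eq:partitionGeneratingCycle} that the signed combination $I_1-I_2+I_3+\cdots+I_6$ is the patchwise cubature of $\oint_{\tilde{\mathcal{G}}_{\mathcal{D}}}\mathbf{F}\cdot\mathbf{n}$, and split $|I_\kappa-I_E|$ into a quadrature piece $E_{\mathrm{quad}}$ on the spline cycle and a geometric piece $E_{\mathrm{geo}}$ between the spline and exact cycles. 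Both arguments then pull $E_{\mathrm{geo}}$ back to the parameter square and separate $[F(\varphi)-F(\tilde\varphi)]J_{\tilde\varphi}$ (bounded via Lipschitz continuity of $F$) from $F(\varphi)[J_\varphi-J_{\tilde\varphi}]$. The one substantive difference lies in this Jacobian term: the paper simply asserts $\int|J_{\mathcal{P}}-J_{\tilde{\mathcal{P}}}|=O(h^{2\kappa-2})$ ``from the $\kappa$th-order accuracy of each one-dimensional spline,'' whereas you correctly flag that a $\kappa$th-order spline controls first derivatives only to $O(h^{\kappa-1})$ and instead integrate by parts in $u$ or $v$ to shift the derivative off $\delta_i$ onto bounded factors, recovering $O(h^{\kappa})$ from the bare $\delta_i$ in both the interior and boundary terms. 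Your handling of this step is more transparently justified; the paper's $O(h^{2\kappa-2})$ bound is stated without a supporting argument and would appear to require either an unstated superconvergence property of the tensor-product spline derivatives or a cancellation that is not spelled out.
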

\begin{proof}
  First, we show that the integral of $f$ is calculated
  to the $\kappa$th-order accuracy
  over each of the six surfaces
  that constitute the generating cycle $\mathcal{G}_{\mathcal{D}}$, 
  i.e., 
\begin{equation}
  \label{eq:approx_surface_integral}
  \forall \mathcal{P} \in \mathcal{C}^{\kappa}((0,1)^{2}), \quad
  \left|I_{\mathcal{P}}(f)- I_{\kappa}(\tilde{\mathcal{P}},f)\right|
  =\mathcal{O}(h^{\kappa}),
\end{equation}
where $\tilde{\mathcal{P}}$ is a $\kappa$th order 
tensor-product spline approximation of $\mathcal{P}$
and $h$ is the maximal distance between two adjacent knots of the spline.
Since $f(\mathbf{x},t_{0})\in \mathcal{C}^{\kappa+1}(\mathbb{R}^{3})$,
the function $F(x,y,z)=\int_{\xi}^{x}f(s,y,z)\dif s$ satisfies 
\begin{equation}
  \label{eq:approx_F}
  \exists C>0 \text{ s.t. }
  \forall u,v\in (0,1),\ 
  |F(\mathcal{P}(u,v))-F(\tilde{\mathcal{P}}(u,v))|\le 
  C\|\mathcal{P}(u,v)-\tilde{\mathcal{P}}(u,v)\|_{2}
\end{equation}
where $\|\mathbf{x}\|_{2}$ is the 2-norm of $\mathbf{x}$. 
Write $\mathbf{x}_{uv}=\mathcal{P}(u,v)$, 
$\tilde{\mathbf{x}}_{uv}=\tilde{\mathcal{P}}(u,v)$, 
and we have 
\begin{equation}
  \label{eq:spline_approximate_affect}
  \renewcommand{\arraystretch}{1.3}
  \begin{array}{rl}
    &|I_{\mathcal{P}}(f)-I_{\tilde{\mathcal{P}}}(f)|
    \le \int_{0}^{1}\int_{0}^{1}
    |F(\mathbf{x}_{uv})J_{\mathcal{P}}
    -F(\tilde{\mathbf{x}}_{uv})J_{\tilde{\mathcal{P}}}|\,\dif u\dif v
    \\
    \le&\int_{0}^{1}\int_{0}^{1}
         |F(\mathbf{x}_{uv})-F(\tilde{\mathbf{x}}_{uv})|\cdot
         |J_{\tilde{\mathcal{P}}}|\,\dif u\dif v
         +\int_{0}^{1}\int_{0}^{1}
         |F(\mathbf{x}_{uv})(J_{\mathcal{P}}-J_{\tilde{\mathcal{P}}})|
         \,\dif u\dif v
    \\
    \le& C\max\limits_{(u,v)\in[0,1]^{2}}\|\mathbf{x}_{uv}
         -\tilde{\mathbf{x}}_{uv}\|_{2}
         \int_{0}^{1}\int_{0}^{1}|J_{\tilde{\mathcal{P}}}|\,\dif u\dif v
         +K\int_{0}^{1}\int_{0}^{1}
         |J_{\mathcal{P}}-J_{\tilde{\mathcal{P}}}|\,\dif u\dif v
    \\
    \le & K_{1}\mathcal{O}(h^{\kappa})+K_{2}\mathcal{O}(h^{2\kappa-2})
          =\mathcal{O}(h^{\kappa}),
  \end{array}
\end{equation}
where the first step follows from a change of variables, 
the second from the triangular inequality,
the third from \eqref{eq:approx_F} and the boundedness of $F$ 
on $\mathcal{P}\cup\tilde{\mathcal{P}}$, 
the fourth from the $\kappa$th-order accuracy
of each one-dimensional spline,
and the last from the condition $\kappa\ge 2$.
In the last two lines, $C,K,K_{1},K_{2}$ are constants. 

Let $p_{ij}$ be the $\kappa$th-degree
 interpolation polynomial of $f$. 
Then 
\begin{equation}
  \label{eq:Cauchy_Remainder}
  \forall u\in(u_{i-1},u_{i}),v\in(v_{j-1},v_{j}),\quad 
  |f(\tilde{\mathcal{P}}(u,v))-p_{ij}(\tilde{\mathcal{P}}(u,v))|
  =O(h^{\kappa+1}).
\end{equation}
Meanwhile, the interpolation conditions dictate 
 $p_{ij}(\tilde{\mathcal{P}}(u_i,v_j))
=f(\tilde{\mathcal{P}}(u_i,v_j))$
 at each interpolation site $(u_i,v_j)$.
Therefore, we have
\begin{equation}
  \label{eq:exact_interpolation}
  I_{\kappa}(\tilde{P},p_{ij})
  =I_{\kappa}(\tilde{P},f). 
\end{equation}


Write $m_{l_1,l_{2}}:=\min\{x|(x,y,z)\in\mathcal{P}_{l_{1},l_{2}}\}$, 
$M_{l_{1},l_{2}}:=\max\{x|(x,y,z)\in\mathcal{P}_{l_{1},l_{2}}\}$, 
then
\begin{displaymath}
  \renewcommand{\arraystretch}{1.3}
  \begin{array}{rl}
    &|I_{\tilde{\mathcal{P}}}(f)-I_{\kappa}(\tilde{\mathcal{P}},f)|
      = |I_{\tilde{\mathcal{P}}}(f)-I_{\kappa}(\tilde{\mathcal{P}},p_{ij})|
      = \int_{\tilde{\mathcal{P}}}|f-p_{ij}|
   \\ \le& \sum_{l_{1},l_{2}=1}^{\iota}
    \int_{\tilde{\mathcal{P}}_{l_{1},l_{2}}}
    \int_{m_{l_{1},l_{2}}}^{M_{l_{1},l_{2}}}
    |f(s,y,z)-p_{ij}(s,y,z)|\dif s
    \dif y\wedge\dif z
    \\
    \le&\sum_{l_{1},l_{2}=1}^{\iota}
    \int_{\tilde{\mathcal{P}}_{l_{1},l_{2}}}
    \mathcal{O}(h^{\kappa+2})\dif y\wedge\dif z
    \le \mathcal{O}(h^{\kappa}),
  \end{array}
\end{displaymath}
where the first step follows from \eqref{eq:exact_interpolation},
the second from the first equality in (\ref{eq:quadrature_rule}),
the third from breaking the integral into pieces, 
the fourth from \eqref{eq:Cauchy_Remainder} and the definition of $h$, 
and the last from $\iota=\mathcal{O}(\frac{1}{h})$.
Then
\eqref{eq:approx_surface_integral} follows from 
\eqref{eq:spline_approximate_affect}
and the triangular inequality. 

It remains to justify signs of individual integrals
 in the last line of Algorithm \ref{alg:MLFC}. 
The flux identity \eqref{eq:fluxIdentityMain} implies 
\begin{equation}
  \label{eq:ApplyFluxIdentity}
  I_{E}=\oint_{\mathcal{G}_{\mathcal{D}}}F(x,y,z)\dif y\wedge\dif z,
\end{equation}
where $\mathcal{G}_{\mathcal{D}}$ is oriented
by Definition \ref{def:OutwardNormalVec} 
whereas the six spline surfaces are oriented
by Definition \ref{def:OutwardNvecForSurf}.
According to Algorithm \ref{alg:generatingCycle}, 
the parametrizations of
$\{\tilde{\mathcal{P}}_{i}\}_{i=1}^{6}$
are the same as those in (\ref{eq:partitionGeneratingCycle})
and thus we have
\begin{displaymath}
  \renewcommand{\arraystretch}{1.3}
  \begin{array}{l}
    \pdfFrac{\tilde{\mathcal{P}}_{1}}{u}
    \wedge\pdfFrac{\tilde{\mathcal{P}}_{1}}{v}
    \wedge \mathbf{n}<0,\quad
    \pdfFrac{\tilde{\mathcal{P}}_{2}}{u}
    \wedge\pdfFrac{\tilde{\mathcal{P}}_{2}}{v}
      \wedge \mathbf{n}<0,\quad
    \pdfFrac{\tilde{\mathcal{P}}_{3}}{v}
    \wedge\pdfFrac{\tilde{\mathcal{P}}_{3}}{t}
    \wedge \mathbf{n}<0,
    \\
    \pdfFrac{\tilde{\mathcal{P}}_{4}}{u}
    \wedge\pdfFrac{\tilde{\mathcal{P}}_{4}}{t}
    \wedge \mathbf{n}<0,\quad
    \pdfFrac{\tilde{\mathcal{P}}_{5}}{u}
    \wedge\pdfFrac{\tilde{\mathcal{P}}_{5}}{t}
    \wedge \mathbf{n}>0,\quad
    \pdfFrac{\tilde{\mathcal{P}}_{6}}{v}
    \wedge\pdfFrac{\tilde{\mathcal{P}}_{6}}{t}
    \wedge \mathbf{n}>0. 
  \end{array}
\end{displaymath}

Hence $\tilde{\mathcal{P}}_{2}$
is the only spline surface whose orientation
is inconsistent to that of $\mathcal{G}_{\mathcal{D}}$.
Therefore, the sign of $I_2$ is $-1$
while that of any other $I_j$ is $+1$. 
\end{proof}

We conclude this section
with a number of observations.
First, in Lemma \ref{lem:DRCubature3d},
one can replace $F(x,y,z)$ 
by any anti-derivative of $f(x,y,z)$ with respect to $x$. 
For example, $\xi$ could be a polynomial of $y$ and $z$; 
but then $m,h$ must be rederived
if the degree of $\xi(y,z)$ is greater than $\kappa-1$.  
Second, the proof of Lemma \ref{lem:DRCubature3d}
  shows that the triple integral over the cuboid $\mathbb{B}^3$
  is theoretically equivalent
  to the surface integral $I_q({\mathcal S},f)$.
  However, the latter is advantageous in numerical calculation
  since it has a lower complexity.
Third, Sudhakar and colleagues
  \cite{sudhakar2013quadrature,sudhakar2014accurate}
  proposed quadrature formulas
  for calculating integrals over a domain bounded
  by linear polygonal faces. 
  The formula we proposed in Lemma \ref{lem:DRCubature3d} 
  can be considered as a generalization
  since it works for domains bounded
  by high-order spline surfaces.
Last, when the number of polynomial surfaces in a spline is large, 
  the number of quadrature nodes in (\ref{eq:quadrature_rule}) 
  may become much larger than the optimum value 
  $\binom{q+3}{3} =\frac{q(q+1)(q+2)}{6}$.
  This could lead to an efficiency deterioration, 
  especially when evaluating the scalar function is expensive 
  and/or (\ref{eq:quadrature_rule}) is repeatedly applied.
  Fortunately, this deterioration can be very much alleviated
  by the compression technique
  based on multivariate discrete measures
  and developed by Sommariva and colleagues 
  \cite{sommariva2015compression,sudhakar2017use}.



\section{Tests}
\label{sec:tests}

In this section,
 we perform a number of numerical tests
 to confirm our analysis
 and to demonstrate the accuracy and efficiency
 of our LFC algorithm.
These tests cover
 static and deforming surfaces,
 incompressible and compressible flows,
 simple and self-intersecting generating cycles.

The relative error of the LFC algorithm is defined as
\begin{equation}
  \label{eq:DefRelativeError}
  E_{\kappa}(h,\Delta t):=
  \left| \frac{I_{\kappa}(h,\Delta t)-I_{E}}{I_{E}}\right|,
\end{equation}
where $I_{E}$ is the exact value of the flux 
and $I_{\kappa}(h,\Delta t)$ is the computational result 
 of the $\kappa$th order LFC algorithm
 with the spatial grid length scale $h$ 
 and the time step size $\Delta t$ as input parameters. 
Based on (\ref{eq:DefRelativeError}), 
 the numerical convergence rate is defined as 
\begin{equation}
  \label{eq:DefConvergenceRate}
  \mathcal{O}_{\kappa}(h,\Delta t)
  =\log_r\frac{E_{\kappa}(rh,r\Delta t)}{E_{\kappa}(h,\Delta t)}, 
\end{equation}
where the refinement factor is set to $r = 2$ in this work.
For the constructed generating cycle
 to be respectively second-, fourth-, and sixth-order accurate, 
 we use linear, cubic, and quintic splines
 for three-dimensional interpolation 
 and adopt the modified Euler method \cite{lambert1991numerical}, 
 the classic fourth-order Runge-Kutta method, 
 and the sixth-order Verner method \cite{verner1978explicit} 
 for time integration.

In all tests, we choose $\texttt{nNodeS}=\texttt{nNodeT}$, 
 i.e. $\Delta t=(t_{e}-t_{0})h$;
 see Algorithm \ref{alg:MLFC}.
We also use the shorthand notation
 $E_{\kappa}(h):=E_{\kappa}(h,(t_{e}-t_{0})h)$.

\subsection{An unsteady incompressible flow}
\label{sec:unst-incompr-flow}

The velocity field for this test
 is the following unsteady incompressible flow 
 proposed by LeVeque \cite{leveque1996high}: 
\begin{equation}
  \label{eq:divFreeVelLeveque}
  \mathbf{u} (\mathbf{x},t)=
  \cos\left(\frac{\pi t}{T}\right)
  \begin{bmatrix}
    2\sin^2(\pi x)\sin(2\pi y)\sin(2\pi z)\\ 
    -\sin(2\pi x)\sin^2(\pi y)\sin(2\pi z)\\ 
    -\sin(2\pi x)\sin(2\pi y)\sin^2(\pi z)
\end{bmatrix}.
\end{equation}

The scalar conservation law \eqref{eq:ConservationLaw} 
 holds for the above velocity and the scalar 
\begin{equation}
  \label{eq:scalarLeveque}
  f(x,y,z,t) = \sin(\pi x)\sin(\pi y)\sin(\pi z).
\end{equation}

Due to the temporal factor $\cos\left(\frac{\pi t}{T}\right)$ 
 in \eqref{eq:divFreeVelLeveque}, 
 the system returns to its initial state at $t=2T$. 

\subsubsection{A static surface}

 For the static surface
 \begin{equation}
  \label{eq:Fixedsurf1}
  \mathcal{S}(u,v,t)=\left(\frac{u}{2},
  \frac{v}{2},\frac{1}{4}\right),\quad u,v\in(0,1), 
 \end{equation} 
 we choose $T=3$, $[t_0,t_e]=[0, \frac{3}{2}]$
 and plot the generating cycle in Figure \ref{fig:divFreeVelLeveque}, 
 where complex geometric features of the donating region
 is demonstrated.
We also report
 corresponding errors and convergence rates of our LFC algorithm
 in Table \ref{tab:divFreeVelLeveque},
 where the second-, fourth-, and sixth-order
 convergence are clearly obtained.

\begin{figure}
  \centering
  \captionsetup{font={small}}  
  \captionof{table}{Errors and convergence rates of the LFC 
    Algorithm \ref{alg:MLFC} for calculating
    the flux of the scalar function 
    \eqref{eq:scalarLeveque}
    through the fixed surface \eqref{eq:Fixedsurf1} 
    in the flow of \eqref{eq:divFreeVelLeveque}
    during the time interval $[0,\frac{3}{2}]$. 
  }
  \begin{tabular}{c|cccccccc}
    \hline \hline
    $\kappa$ 
    & $\mathit{E}_{\kappa}\left(\frac{1}{32}\right)$
    & $\mathcal{O}_{\kappa}$
    & $\mathit{E}_{\kappa}\left(\frac{1}{64}\right)$
    &$\mathcal{O}_{\kappa}$
    & $\mathit{E}_{\kappa}\left(\frac{1}{128}\right)$
    &$\mathcal{O}_{\kappa}$
    & $\mathit{E}_{\kappa}\left(\frac{1}{256}\right)$  \\
    \hline
    2 & 9.15e-03 &   1.69 & 2.83e-03 &   1.87 
    & 7.75e-04 &   1.95 & 2.01e-04 \\  
    4 & 9.63e-04 &   5.13 & 2.75e-05 &   3.98 
    & 1.75e-06 &   4.20 & 9.48e-08 \\ 
    6 & 7.16e-04 &   6.42 & 8.38e-06 &   8.07 
    & 3.12e-08 &   5.86 & 5.38e-10 \\  
    \hline
    \hline
  \end{tabular}
  \label{tab:divFreeVelLeveque}
  \includegraphics[width=0.6\linewidth]{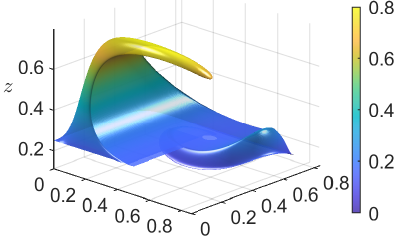}
  \captionof{figure}{The generating cycle
    constructed with $\kappa=6$ and $h=\frac{1}{256}$
    for the fixed surface \eqref{eq:Fixedsurf1}
    over the time interval $[t_0, t_e]=[0,\frac{3}{2}]$
    in the flow of \eqref{eq:divFreeVelLeveque} where $T=3$. 
    The surface color indicates the altitude. 
  }		
  \label{fig:divFreeVelLeveque}
\end{figure}

\subsubsection{A moving surface}

Aside from static surfaces, 
 we also test our LFC algorithm for a moving surface, 
 \begin{equation}
   \mathcal{S}(u, v, t)=\left(  
     u\sin\frac{\pi t}{2},\ 
     v\sin\frac{\pi t}{2},\ 
     \frac{t^2}{72} \left(9u^2+4v^2\right)
   \right),\quad u,v\in(-1, 1).
   \label{eq:MS1}
 \end{equation}

To satisfy the range of $u,v\in(0,1)^{2}$
 assumed in Algorithm \ref{alg:MLFC}, 
 we convert \eqref{eq:MS1} to the following equivalent form
\begin{displaymath}
  \mathcal{S}(u, v, t)=\left(  
    (2u-1)\sin\frac{\pi t}{2},\ 
    (2v-1)\sin\frac{\pi t}{2},\ 
    \frac{t^2}{72}[9(2u-1)^2+4(2v-1)^2]
  \right).
\end{displaymath}

\begin{figure}
  \centering
  \captionsetup{font={small}}
  \captionof{table}{Errors and convergence rates of the LFC 
    Algorithm \ref{alg:MLFC} for calculating
    the flux of scalars
    \eqref{eq:divFreeVelLeveque} and $f=1$
    through the moving surface \eqref{eq:MS1}
    in the flow of \eqref{eq:divFreeVelLeveque}
    during the time interval $[0,1]$; 
    see the last plot in the second row
    of Figure \ref{fig:divFreeVelLevequeMoving}
    for the corresponding generating cycle.
  }    
  \begin{tabular}{c|ccccccc}
    \hline \hline
    $\kappa$    & $E_{\kappa}\left(\frac{1}{32}\right)$
    & ${\mathcal{O}}_{\kappa}$ &$E_{\kappa}\left(\frac{1}{64}\right)$
    & ${\mathcal{O}}_{\kappa}$ &$E_{\kappa}\left(\frac{1}{128}\right)$
    & ${\mathcal{O}}_{\kappa}$ &$E_{\kappa}\left(\frac{1}{256}\right)$
    \\
    \hline
    \multicolumn{8}{c}{the scalar function in \eqref{eq:scalarLeveque}}\\
    \hline\hline
    2 & 5.76     & 2.11 & 1.34     & 2.04 & 3.25e-01 & 2.02 & 8.03e-02\\
    4 & 2.04e-02 & 0.19 & 1.79e-02 & 4.29 & 9.15e-04 & 4.08 & 5.41e-05\\
    6 & 2.57e-01 & 3.91 & 1.71e-02 & 8.69 & 4.15e-05 & 8.57 & 1.09e-07\\
    \hline\hline
    \multicolumn{8}{c}{the constant scalar $f=1$}\\
    \hline\hline
    2 & 5.10e-02 & 2.03 & 1.25e-02 & 2.02 & 3.09e-03 & 2.01 & 7.67e-04\\
    4 & 1.08e-02 & 7.84 & 4.72e-05 & 3.30 & 4.78e-06 & 4.43 & 2.22e-07\\
    6 & 1.49e-02 & 5.71 & 2.83e-04 & 7.94 & 1.16e-06 & 9.02 & 2.23e-09\\
    \hline\hline
  \end{tabular}
  \label{ta:taylar2}

  \vspace{3mm}
  
  \includegraphics[width=1\linewidth]{./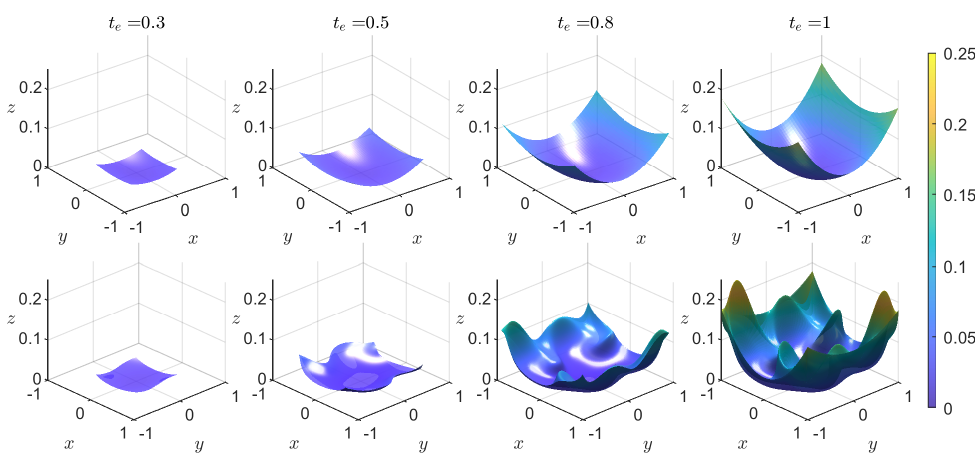} 
  \captionsetup{font={small}}
  \captionof{figure}{The moving surface \eqref{eq:MS1} (the first row)
    and the corresponding generating cycles (the second row)
    constructed by Algorithm \ref{alg:generatingCycle}
    with $\kappa=6$ and $h=\frac{1}{256}$
    for the unsteady flow \eqref{eq:divFreeVelLeveque}
    over the time interval $[0,t_{e}]$
    where $t_{e}=0.3$, $0.5$, $0.8$, $1$. 
    The color indicates the altitude.}
  \label{fig:divFreeVelLevequeMoving}
\end{figure}

The evolution of $\mathcal{S}(t)$ at different time instants 
 and the corresponding generating cycles
 for the unsteady flow \eqref{eq:divFreeVelLeveque}
 are respectively plotted in the first and the second rows
 of Figure \ref{fig:divFreeVelLevequeMoving}. 
We also present errors and convergence rates of our LFC algorithm 
 for two different scalar functions in Table \ref{ta:taylar2},
 where all of the desired convergence rates $\kappa=2$, $4$, $6$
 are achieved. 
In particular,
 errors of the constant scalar $f=1$
 are much smaller than those of (\ref{eq:scalarLeveque})
 and 
 errors of the higher-order algorithms
 are much smaller than those of the second-order algorithm
 by orders of magnitude.

\subsection{A steady compressible flow}

In this test, the velocity field is 
\begin{equation}
  \label{eq:rotationStrainVel}
  \mathbf{u} (\mathbf{x},t)=
  \begin{bmatrix}
    x+2\pi(y+z)\\ 
    -2\pi(x+z)+y\\ 
    z+2\pi(y-x)
  \end{bmatrix}
  \quad 
\end{equation}
with a nonzero divergence $\nabla\cdot \mathbf{u}=3$
and the scalar function is 
\begin{equation}
  \label{eq:rotationStrainScalar}
  f(x,y,z,t) = (x^2+y^2+z^2)e^{-5t}.
\end{equation}
It is readily verified that the above $\mathbf{u}$ 
 and $f$ satisfy the conservation law 
 \eqref{eq:ConservationLaw}.

\begin{figure}
  \centering
  \captionof{table}{Errors and convergence rates of the LFC Algorithm 
  \ref{alg:MLFC} for calculating
  the flux of the scalar function \eqref{eq:rotationStrainScalar}
  through the fixed surface \eqref{eq:fixed_surf}
  in the flow of \eqref{eq:rotationStrainVel}
  during the time interval $[0,1]$. 
  See Figure \ref{fig:taylar3ForFixedSurf} 
  for a corresponding generating cycle.}
  \begin{tabular}{c|ccccccc} 
    \hline\hline 
  $\kappa$ & $E_{\kappa}\left(\frac{1}{32}\right)$ &  
  ${\mathcal{O}}_{\kappa}$ & $E_{\kappa}\left(\frac{1}{64}\right)$ &  
  ${\mathcal{O}}_{\kappa}$ & $E_{\kappa}\left(\frac{1}{128}\right)$ &
  ${\mathcal{O}}_{\kappa}$ & $E_{\kappa}\left(\frac{1}{256}\right)$\\
    \hline 
  2 & 4.48e-05 & -1.89 & 1.66e-04 
  &   1.41 & 6.27e-05 &   1.79 & 
  1.81e-05 \\ 
  4 & 5.38e-06 &   3.20 & 5.84e-07 
  & 3.77 & 4.29e-08 &   3.91 &
  2.86e-09\\ 
  6 & 5.74e-09 &   6.45 & 6.55e-11 
  & 6.28 & 8.41e-13 &   4.70 
  & 3.24e-14\\ 
    \hline\hline 
  \end{tabular} 
  \label{ta:taylar3ForFixedSurf}
  \vspace{3mm}
  \begin{minipage}{0.45\textwidth}
    \centering  
    \includegraphics[width=\linewidth,height=0.25\textheight]
    {./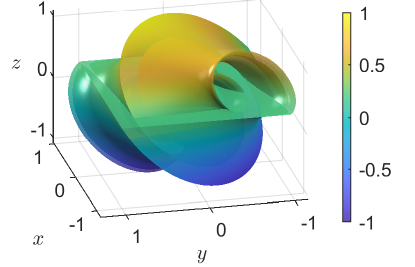}  
    \label{fig:GeneratingSurfForFixed}
  \end{minipage}
  \begin{minipage}{0.4\textwidth}  
    \centering  
    \includegraphics[width=\linewidth,height=0.25\textheight]
    {./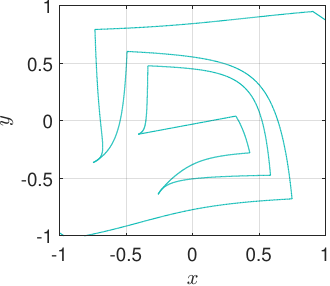}  
    \label{fig:Contour} 
  \end{minipage} 
  \captionsetup{font={small}}
  \captionof{figure}{
    The generating cycle ${\mathcal G}_{\mathcal S}$ (the left subplot)
    and part of its self-intersection in \eqref{eq:SelfInterSet}
    (the right subplot)
    constructed by Algorithm \ref{alg:generatingCycle} 
    with $\kappa=6$ and $h= \frac{1}{256}$ 
    for the fixed surface \eqref{eq:fixed_surf} 
    in the compressible flow \eqref{eq:rotationStrainVel}
    during the time interval $[0,1]$.
    The color indicates the altitude. 
}
\label{fig:taylar3ForFixedSurf}
\end{figure}

\subsubsection{A static surface}

The generating cycle $\mathcal{G}_{\mathcal{S}}$
 of the static surface 
 \begin{equation}
  \label{eq:fixed_surf}
  \mathcal{S}(u,v,t)=\left(2u-1,2v-1,\frac{1}{4}\right),\quad u,v\in(0,1)
 \end{equation}
 during the time interval $[0, 1]$
 is plotted in Figure \ref{fig:taylar3ForFixedSurf},
 where we also show
 part of the self-intersections of $\mathcal{G}_{\mathcal{S}}$, 
 \begin{equation}
   \label{eq:SelfInterSet}
   \mathcal{I}_{\mathcal{S}}:=\cup_{i=2}^{6}
   \left(\mathcal{S}\cap\mathcal{P}_{i}\right),
 \end{equation}
 where ${\mathcal S}$ and $\{\mathcal{P}_{i}\}_{i=2}^{6}$
 constitute $\mathcal{G}_{\mathcal{S}}$; 
 see \eqref{eq:partitionGeneratingCycle}.

LFC via the identity \eqref{eq:FluxIdentity} 
 requires decomposing the whole space into 
 several simple connected regions 
 and identifying the weights of each region. 
However,
 the generating cycle in Figure \ref{fig:taylar3ForFixedSurf}
 has nontrivial self-intersections.
As discussed in Section \ref{sec:introduction},
 it would be very complicated and expensive
 to decompose the bounded complement of the generating cycle
 into donating regions with nonzero indices.
Furthermore,
 the decomposition requires calculating the intersection
 of spline surfaces,
 which can be arbitrarily ill-conditioned.
In contrast,
 our LFC algorithm is based on identity \eqref{eq:FluxIdentity1}
 and thus avoids the decomposition. 
The excellent conditioning of our LFC algorithm
 is shown by the last row of Table 
 \ref{ta:taylar3ForFixedSurf},
 where the second-, fourth-, and sixth-order
 convergence rates are once again demonstrated.

\subsubsection{A moving surface}
Now, we test our LFC algorithm for a moving surface, 
\begin{equation}
  \label{eq:MS2}
  \mathcal{S}(u,v,t)=\left(u\sin\frac{\pi t}{2},v\sin\frac{\pi t}{2},
  \frac{t^2}{72}(9u^2+4v^2)\right),\quad u,v\in(0,1).
\end{equation}
In Figure \ref{fig:taylar3},
 we plot the constructed generating cycles
 of the moving surface \eqref{eq:MS2}
 for two time intervals $[0, 1]$ and $[0, 2]$. 
As the time span becomes longer, 
 the generating cycle exhibits finer features
 and more self-intersections.
Corresponding errors and convergence rates 
 are presented in Table \ref{ta:taylar3}, 
 where it is clear that the longer time interval 
 and more complex geometry 
 do not deteriorate the accuracy of our LFC algorithm.
 \begin{figure}[htbp]
  \centering
  \captionof{table}{Errors and convergence rates of our LFC 
    Algorithm \ref{alg:MLFC} for calculating
    the flux of the scalar function \eqref{eq:rotationStrainScalar}
    through the deforming surface \eqref{eq:MS1}
    in the flow of \eqref{eq:rotationStrainVel}
    for two time intervals; 
    see Figure \ref{fig:taylar3} for the corresponding generating cycles.}
  \begin{tabular}{c|cccccccc} 
    \hline\hline 
    $\kappa$ & $E_{\kappa}\left(\frac{1}{32}\right)$& $\mathcal{O}_{\kappa} $ & $E_{\kappa}\left(\frac{1}{64}\right)$
    & ${\mathcal{O}}_{\kappa}$ &$E_{\kappa}\left(\frac{1}{128}\right)$
    & ${\mathcal{O}}_{\kappa}$ &$E_{\kappa}\left(\frac{1}{256}\right)$&
    \\ 
    \hline
    \multicolumn{8}{c}{$[t_0, t_e]=\left[ 0, 1 \right]$}
    \\
    \hline \hline 
    2 & 2.15e-02& -0.58 & 3.20e-02 & 1.53 & 1.11e-02 & 1.82 & 3.13e-03\\ 
    4 & 8.04e-04 & 3.20 &8.75e-05 & 3.77 & 6.42e-06 
    & 3.91 & 4.28e-07\\ 
    6 & 7.00e-06& 6.96 &5.62e-08 & 6.83 & 4.95e-10 
    & 6.21 & 6.71e-12\\ 
    \hline \hline 
    \multicolumn{8}{c}{$[t_0, t_e]=\left[ 0, 2 \right]$}\\
    \hline\hline 
    2 & 1.27& 5.19& 3.48e-02& -0.02& 3.52e-02 & 1.55 & 1.20e-02\\
    4 & 1.41e-02& 4.33& 6.99e-04& 3.04& 
    8.48e-05 & 3.72 & 6.44e-06\\
    6 & 1.15e-04 & 6.67& 1.13e-06& 6.32& 
    1.41e-08 & 6.15 & 1.98e-10\\
    \hline\hline
  \end{tabular}
  \label{ta:taylar3}

  \vspace{3mm}

  \centering
  \captionof{table}{CPU times $T_{\kappa}(\texttt{nNodeS})$
    consumed by our $\kappa$th-order LFC algorithm
    with $h=\frac{1}{\texttt{nNodeS}}$
    for the case $[t_0,t_{e}]=[0,1]$ in Table \ref{ta:taylar3}
    on a Lenovo YOGA 710 laptop
    with an Intel Core i7 7500U processor.
  }
  \begin{tabular}{c|ccccccc}
    \hline
    \hline
    $\kappa$& $T_{\kappa}(32)$ & ratio & $T_{\kappa}(64)$ 
    &ratio & $T_{\kappa}(128)$& ratio  & $T_{\kappa}(256)$\\
    \hline
    2& 2.58s& 2.67 & 6.89s & 4.98& 34.30s & 5.69& 195.11s\\ 
    4& 2.66s& 4.78& 12.71s &5.40& 68.67s & 6.01 & 412.95s\\
    6& 5.41s& 5.56& 30.06s &5.83& 175.11s &6.33& 1108.75s\\
    \hline
    \hline
  \end{tabular}
  \vspace{3mm}

  \includegraphics[width=0.8\linewidth,height=0.22\textheight]
  {./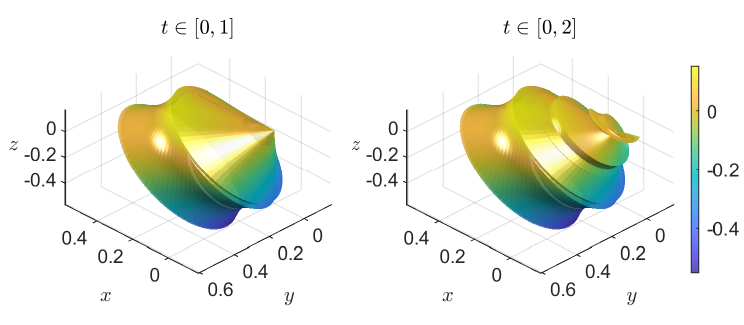}
  \captionsetup{font={small}}
  \captionof{figure}{The generating cycles
    constructed by Algorithm \ref{alg:generatingCycle}
    with $\kappa=6$ and $h= \frac{1}{256}$
    for the moving surface \eqref{eq:MS2} 
    in the compressible flow \eqref{eq:rotationStrainVel}. 
    The color indicates the altitude.} 
  \label{fig:taylar3}
\end{figure}
We also report CPU times of our LFC algorithm
 for the shorter time interval in Table \ref{ta:taylar3}. 
The linear algorithm ($\kappa=2$) consumes 195.11 seconds 
 in calculating the flux with a relative error of 3.13e-03 
 while the sixth-order algorithm only 5.41 seconds 
 in producing a result about five times more accurate. 
This drastic comparison is not a surprise 
 since the Gauss quadrature formulas and the cubic/quintic splines 
 greatly improve the cost-effectiveness of LFC. 

When we simultaneously reduce by a factor of 2
 the spatial grid size and the time step size, 
 we expect that the total CPU time of LFC increase
 by a factor of 8,  
 because the reduction of the time step size
 should increase the construction time of the generating cycle
 by a factor of 2
 and that of the spatial grid size
 should increase the integration time of the scalar
 by a factor of 4, c.f. the quadrature formula (\ref{eq:quadrature_rule}). 
The tests results in Table \ref{ta:taylar3}, however, 
 show factors around $6$ for the two finest grids. 
In comparison,
 numerically solving the scalar conservation law in three dimensions
 would increase the CPU time by a factor of 16
 every time the grid size is halved.
 
All numerical results presented in this section
 can be reproduced 
 with the companion MATLAB package freely available 
 at \texttt{https://github.com/wdachub/LFC3D}.

\section{Conclusion}
\label{sec:conclusion}

We have extended our theory of donating regions 
 for two-dimensional static curves
 \cite{zhang2013donating,zhang2015generalized,zhang2019lagrangian}
 to that for three- and higher-dimensional moving hyperspaces.
In the context of scalar conservation laws,
 the Eulerian flux through a moving hyperspace
 within a time interval
 in the flow of a nonautonomous velocity field 
 is identified with
 two Lagrangian fluxes,
 one as an integral of $f$ over the generating cycle
 and the other as a weighted sum of integrals
 over donating regions of all nonzero indices.
Depending only on the given velocity and the initial conditions, 
 both Lagrangian fluxes are time independent
 and free of solving the scalar conservation law. 
As such, the flux identities can be considered as analytic solutions
 of the problem of LFC. 
Our analysis also casts light on
 the problem of Lagrangian particle classifications. 
 
Based on the flux identity that is more suitable for numerical computation,
 we propose a simple LFC algorithm for moving surfaces 
 in three dimensions,
 prove its convergence rates, 
 and demonstrate its efficiency and accuracy
 by results of an array of numerical tests.
In particular,
 the new LFC algorithm
 is much more efficient than solving the scalar conservation laws
 and overcomes the ill-conditioning of previous LFC algorithms.  

Future studies include two directions. 
First,
 many problems in meteorology and oceanography 
 involve flux calculations in curved 2-manifolds, 
 and thus we plan to augment our theory and algorithms
 of LFC for this scenario as a further generalization
 of the current work.
Second,
 since the time interval of LFC can be arbitrarily long
 while being free
 of the Courant-Friedrichs-Lewy condition, 
 the theory and algorithms of LFC
 may lead to new efficient finite volume methods
 for numerically solving partial differential equations
 such as conservation laws and the advection-diffusion equation. 







\bibliographystyle{abbrv}

\end{document}